\setlist[enumerate,1]{label={\normalfont(\roman*)}}
 \newcommand*\patchAmsMathEnvironmentForLineno[1]{
   \expandafter\let\csname old#1\expandafter\endcsname\csname #1\endcsname
   \expandafter\let\csname oldend#1\expandafter\endcsname\csname end#1\endcsname
   \renewenvironment{#1}
     {\linenomath\csname old#1\endcsname}
     {\csname oldend#1\endcsname\endlinenomath}}
 \newcommand*\patchBothAmsMathEnvironmentsForLineno[1]{
   \patchAmsMathEnvironmentForLineno{#1}
   \patchAmsMathEnvironmentForLineno{#1*}}
 \newcommand{\supp}{\mathrm{supp\,}}
\newcommand{\R}{\mathbb{R}}
\newcommand{\N}{\mathbb{N}}
\newcommand{\e}{\varepsilon}
\newcommand{\rd}{\mathrm{d}}
\def\dist{\mathrm{dist}}
\theoremstyle{definition}
\newtheorem{definition}{Definition}[section]
\theoremstyle{plain}
\newtheorem{theorem}[definition]{Theorem}
\newtheorem{corollary}[definition]{Corollary}
\newtheorem{lemma}[definition]{Lemma}
\newtheorem{proposition}[definition]{Proposition}
\newtheorem{remark}[definition]{Remark}
\newtheorem*{notation}{Notation}
\title
	{\bf Normalized clustering peak  solutions for Schr\"odinger equations with general nonlinearities}
  \author[a]{Chengxiang Zhang\thanks{  zcx@bnu.edu.cn}}
  \author[b]{Xu Zhang\thanks{ darkblue1121@163.com, corresponding author  }}
  \affil[a]{\footnotesize Laboratory of Mathematics and Complex Systems (Ministry of Education), School of Mathematical Sciences, 
  
  Beijing Normal University, Beijing 100875, P. R. China}
  \affil[b]{\footnotesize  School of Mathematics and Statistics, Central South University, Changsha 410083, P. R. China}
  \date{}
\begin{document}
  \maketitle
  \begin{minipage}{16.5cm}
    {\small {\bf Abstract:}
     We are concerned with the normalized $\ell$-peak solutions to  the nonlinear Schr\"{o}dinger equation
    \[
      \begin{cases}
        -\varepsilon^2\Delta v+V(x)v=f(v)+\lambda v,\\
        \int_{\R^N}v^2 =\alpha \e^N. 
        \end{cases}
     \]
    Here $\lambda \in \mathbb{R}$ will arise as a Lagrange multiplier,
      $V$ has a local maximum point, and $f$ is a general $L^2$-subcritical nonlinearity  satisfying a nonlipschitzian property  that $\lim_{s\to0} f(s)/s=-\infty$.
    The peaks of solutions that we construct cluster near a local maximum of $V$ as $\e\to0$.
    Since there is no information about the uniqueness or nondegeneracy for the limiting system,  
    a delicate lower gradient estimate should be established when the local centers of mass of functions are away from the local maximum of $V$.
    We introduce a new method to obtain this estimate, which is significantly different from the ideas in  del Pino and Felmer \cite{DelPino-Felmer2002} (Math. Ann. 2002),
    where a special gradient flow with high regularity  is used, and in Byeon and Tanaka \cite{Byeon-Tanaka,Byeontanaka} (J. Eur. Math. Soc.  2013 \& Mem. Amer.
    Math. Soc. 2014), where an extra  translation flow is introduced.
    We also give  the existence of ground state solutions for the autonomous problem, i.e., the case $V\equiv0$. 
    The ground state energy is not always negative and
    the strict subadditive property of ground state energy  
    here is achieved by strict  concavity.  
    
    \medskip {\bf Keywords:}  Nonlinear Schr\"{o}dinger equation;   Semiclassical stationary states; Normalized solutions.
    
    \medskip {\bf Mathematics Subject Classification:} 35J20 $\cdot$ 35J15 $\cdot$ 35J60
    }
    
    \end{minipage}

\section{Introduction and main Results}
We study the  semiclassical states of the following
logarithmic Schr\"odinger equation
\begin{equation}\label{1.1}
\begin{cases}
-\varepsilon^2\Delta v+V(x)v=f(v)+\lambda v,\\
\int_{\R^N}v^2 =\alpha \e^N, \ \  
\end{cases}
\end{equation}
where $N\geq 1$, $\varepsilon>0$ is a small parameter, $f$ is a general nonlinearity, and $V$ is a function having  a local maximum point.
The problem comes from the study of stationary states for the time-dependent nonlinear Schr\"odinger equation
\begin{equation}\label{NLS}
i\hbar \frac{\partial \psi}{\partial t}=-\frac{\hbar^2}{2m}\Delta \psi +V(x)\psi -g(|\psi|)\psi=0.
\end{equation}
Note that a stationary state possesses the form  $\psi( x, t)= v(x) e^{-\frac{i\lambda t}{\hbar}}$. 
Then $\psi$ is a stationary solution to \eqref{NLS} if and only $(\lambda, v)$ is a solution to \eqref{1.1} with $\e=\frac{\hbar}{\sqrt{2m}}$, $f(u)=g(u)u$.
The  $L^2$ constraint in \eqref{1.1} comes from the mass conservation property of the  stationary state.
Remark   that  solutions under the  $L^2$ constraint are
usually referred to as the normalized solutions.

In the autonomous case $V\equiv V_0$, by a transformation of variable $u(x)=v(\e x)$, and by replacing the unknown number $\lambda$ by $\lambda+V_0$, problem \eqref{1.1} is equivalent to 
\begin{equation}\label{equation1}
  \begin{cases}
    -\Delta u=f(u) +\lambda u,\\
    \int_{\R^N} u^2=\alpha.
  \end{cases}
\end{equation}
 This autonomous problem has been extensively studied
  since   \cite{lions,CL82} in the $L^2$-subcritical case and \cite{Jeanjean97}   
  in the $L^2$-supercritical case. The existence results are built for more general nonlinearities in \cite{Shibata,IT19,Jeanlu} recently.
 On the other hand, 
 the solvability of \eqref{1.1} with various nonconstant potentials and general nonlinearities is rather poorly understood so far.
When $\e=1$,
\cite{IM20,YQZ} give the existence of solutions for $L^2$-subcritical case under the assumption $  \lim_{x\to\infty} V(x)=V_\infty\geq V\not\equiv V_\infty$;  
\cite{Dingzhong} considered  with similar potential assumption and Ambrosetti--Rabinowitz type conditions on nonlinearity in the $L^2$-supercritical case; and
\cite{BMR21} studied the $L^2$-supercritical  problem with a power type nonlinearity $f(u)=u^{p-1}$ and a positive potential    vanishing at infinity.
We also note that \cite{AW19} studied solutions of multibump type with periodic assumptions under a strict nondegeneracy condition.
Considering $\e$ as a small parameter, 
\cite{Alves,zhangzhang}   studied the problem 
with $V=0$ and a potential $K$ on the nonlinearity, i.e., $K(x)f(u)$ or $K(x)u^{p-1}$. If   $K$ has local maximum points
\cite{Alves} showed the existence of local minimizers for $L^2$-subcritical problem ; \cite{zhangzhang} constructed multibump solutions with each bump concentrate to 
a local maximum point of $K$ in the $L^2$-subcritical and $L^2$-supercritical case
by a local deformation argument. We also refer to \cite{TZZZ} in which the authors studied  problems  in bounded set with several component  and problems in the whole space  with a steep well potential.
As the mass tends to some limit, the problems are transformed to  singular perturbed type with two parameter similar to \eqref{1.1}.
However,  there is few result for \eqref{1.1} with a potential $V$ having local maximum points.

For singular perturbation problems without an $L^2$ constraint, that is, for
the following equation
\[
  -\varepsilon^2\Delta v+V(x)v=f(v),\quad v\in H^1(\R^N),
\]
there have been many studies on constructing solutions concentrated near local critical points of the potential following the pioneering work
of Floer and Weinstein \cite{FW}. In  \cite{FW}, they found a  positive solution which concentrates at a nondegenerate critical point of $V$
by the Lyapunov--Schmidt reduction method which requires some  nondegeneracy conditions on the limiting problem.
For problems with  no uniqueness or nondegeneracy condition   assumed  on the  limiting problem,
 the  solutions are usually found as 
critical points of corresponding functionals through the variational approach, in which basic strategy is to obtain a Palais--Smale sequence  
 through a    deformation  
generated by a descending flow, usually the negative gradient flow. 
This method was initiated by Rabinowitz in  \cite{Rabinowitz1992}. 
See also \cite{DelPino-Felmer1996,DelPino-Felmer1997,DelPino-Felmer2002,Byeon2002,BW-1,Byeon-Tanaka,Byeontanaka,WX1,DPR,DPRc,CingolaniTanaka} for further studies.

The motivation 
for this paper is that  
the known studies on \eqref{1.1} or \eqref{equation1} are mainly based on an assumption that $f(s)/s\to 0$ as $s\to 0$. This excludes some nonlipschitzian nonlinearity
such as $s\log s+ u^{p-1}$,  or $-u^{q-1}+u^{p-1}$,  where $q\in(1,2)$ and $p\in(2,2+\frac4N)$. We first study the autonomous problem 
and consider a general class of nonlinearities such that $f(s)/s\to-\infty$ as $s\to 0$.
More precisely, we impose the following assumptions on  $f$:
\begin{enumerate}
  \item[(F1)] $f \in C(\mathbb{R},\R )$ and $f(0)=0$.
  \item[(F2)]  $\lim _{s \rightarrow 0^+} f(s) / s =-\infty$.
\item[(F3)] $\limsup_{s \rightarrow +\infty} f(s) / s^{1+4/N}=c_0  $.
   \item[(F4)] 
     $s^{-1}f(s)$ is strictly increasing for $s>0$.
\end{enumerate}
Note that $(F4)$ implies   $c_0\geq 0$.
Ground states  are usually found by the following minimization problem
\begin{equation}\label{Ea}
 E_\alpha= \inf\Big\{J(u)=\frac12\int_{\R^N}|\nabla u|^2-\int_{\R^N}F(u)\ \Big|\ {u\in \mathcal M_\alpha} \Big\},
\end{equation}
where $\mathcal M_\alpha=\set{u\in H^1(\R^N) | \int_{\R^N} u^2=\alpha }$, $F(s)=\int_{0}^{|s|} f(\tau) d \tau$.
It is well known that the following Gagliardo--Nirenberg inequality plays an important role to determine whether the given infimum is well-defined,  
\begin{equation}\label{GN}
  |u|_{2+4/N}^{2+4/N} \leq S(N) |\nabla u |_2^{2} |u |_2^{4 / N}, \quad u\in H^1(\R^N),
\end{equation}
where $S(N)>0$ is the optimal constant for the Gagliardo--Nirenberg inequality.
We set 
 \[\alpha_N:=\begin{cases}
  (2c_0 S(N))^{-\frac N2},\quad &\mbox{if}\quad c_0>0,\\
  +\infty,\quad &\mbox{if}\quad c_0= 0.
 \end{cases}\]

\begin{theorem}\label{them1.1}
  Assume (F1)--(F4). For each  $\alpha\in (0, \alpha_N)$, \eqref{equation1} has a solution $(\lambda, u)$, such that $u$ is a nonnegative nontrivial function, and  is a global minimizer for $E_\alpha$. Moreover, 
  \begin{enumerate}
    \item $E_\alpha$ is continuous and strictly concave.
    \item  $\lim_{\alpha\to 0} E_\alpha =0$ and $E_\alpha>0$ for small $\alpha$.
    \item Assume further  $c_0=0$. Then      $E_\alpha$ has a unique zero in $(0,+\infty)$  and 
    $\lim_{\alpha\to+\infty} E_\alpha=-\infty$ if $f$ admits a zero in $(0,+\infty)$; and 
    $E_\alpha$ is strictly increasing in $(0,+\infty)$ if  $f$ is negative in $(0,+\infty)$.
  \end{enumerate}
\end{theorem}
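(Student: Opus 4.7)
The proof has three intertwined phases: coercivity and existence of a minimizer; strict concavity of $E_\alpha$; and the quantitative statements (i)--(iii). For the first phase, I would combine (F2) (which forces $F\leq 0$ near the origin) with (F3) and the Gagliardo--Nirenberg inequality \eqref{GN} to obtain a bound of the form $J(u)\geq c_\alpha\,|\nabla u|_2^2-C_\alpha$ on $\mathcal{M}_\alpha$, with $c_\alpha>0$ precisely because $\alpha<\alpha_N$. Thus $E_\alpha>-\infty$ and every minimizing sequence is $H^1$-bounded. Existence of a minimizer is then obtained via Lions' concentration-compactness: dichotomy is excluded by the strict subadditivity coming out of the next phase, while vanishing is prevented by (F2), which makes any $L^2$-normalized but ``spread-out'' sequence pick up a large, positive contribution from $-F$ at small values. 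Weak lower semicontinuity together with a Brezis--Lieb decomposition then produces a minimizer $u\in\mathcal{M}_\alpha$, which may be replaced by $|u|\geq 0$, with $\lambda$ arising as the Lagrange multiplier.

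The central phase is strict concavity. The key algebraic observation is that (F4) is equivalent to strict convexity of $G\colon[0,\infty)\to\mathbb{R}$ defined by $G(t):=F(\sqrt{t})$, since $G'(t)=f(\sqrt{t})/(2\sqrt{t})$. Given $0<\alpha_1\ne\alpha_2$, $\theta\in(0,1)$, and $\alpha_\theta:=\theta\alpha_1+(1-\theta)\alpha_2$, take a minimizer $w$ of $E_{\alpha_\theta}$ and set $w^{(i)}:=\sqrt{\alpha_i/\alpha_\theta}\,w\in\mathcal{M}_{\alpha_i}$. The kinetic parts combine linearly as $\theta|\nabla w^{(1)}|_2^2+(1-\theta)|\nabla w^{(2)}|_2^2=|\nabla w|_2^2$, while the strict Jensen inequality for $G$ applied pointwise on $\{w\ne 0\}$ gives
\[
\theta\int F(w^{(1)})+(1-\theta)\int F(w^{(2)})=\int\bigl[\theta\,G(\tfrac{\alpha_1}{\alpha_\theta}w^2)+(1-\theta)\,G(\tfrac{\alpha_2}{\alpha_\theta}w^2)\bigr]>\int G(w^2)=\int F(w).
\]
Therefore $\theta J(w^{(1)})+(1-\theta)J(w^{(2)})<J(w)=E_{\alpha_\theta}$, and since $J(w^{(i)})\geq E_{\alpha_i}$, the strict concavity $\theta E_{\alpha_1}+(1-\theta)E_{\alpha_2}<E_{\alpha_\theta}$ follows.

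For the qualitative properties: continuity in (i) is automatic from concavity on an open interval. For (ii), $\lim_{\alpha\to 0}E_\alpha=0$ follows by testing with rescalings of a fixed shape whose energy vanishes in the limit, while $E_\alpha>0$ for small $\alpha$ uses (F2) decisively: any sequence with $J(u_n)\to 0$ must have $|\nabla u_n|_2\to 0$ by coercivity, hence $u_n\to 0$ in $L^{2+4/N}$ by \eqref{GN}, but the $L^2$-mass $\alpha$ must eventually sit in $\{|u_n|\leq\delta_M\}$ for arbitrarily small $\delta_M$, where (F2) gives $-F(s)\geq M s^2$ with $M$ arbitrarily large, forcing $\int\max(-F,0)(u_n)\to\infty$, a contradiction. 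For (iii) with $c_0=0$: if $f$ vanishes at some $s_0>0$ then (F4) forces $F(s_0)>0$, and a suitably scaled plateau test function of height $s_0$ drives $E_\alpha\to-\infty$ as $\alpha\to+\infty$, so strict concavity plus $E_0=0$ yields a unique zero. If instead $f<0$ on $(0,\infty)$ then $F<0$ and $J>0$ on every $\mathcal{M}_\alpha$, so $E_\alpha>0$; combined with $E_0=0$ and strict concavity, this forces strict monotonicity.

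The main obstacle is the circular dependence between existence and strict concavity: the former uses strict subadditivity, which I derive from strict concavity, which I in turn prove at the true minimizer. I would break the loop by first establishing strict subadditivity $E_\alpha<E_{\alpha_1}+E_{\alpha-\alpha_1}$ directly, via the same convexity-of-$G$ computation applied to near-minimizers, using (F2) to rule out vanishing so that the pointwise convexity gap integrates to a quantity bounded uniformly away from zero; this weaker statement suffices to run concentration-compactness, and once a genuine minimizer is in hand the argument of the second paragraph above upgrades the conclusion to full strict concavity.
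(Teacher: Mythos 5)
Your outline and the paper's proof share the same backbone: Gagliardo--Nirenberg coercivity because $\alpha<\alpha_N$, concentration-compactness to extract a minimizer, and strict convexity of $t\mapsto F(\sqrt{t})$ (the paper's (F4')) as the engine behind concavity of $\alpha\mapsto E_\alpha$. Two points need to be flagged.

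First, your resolution of the existence/strict-subadditivity circularity is under-specified, and the paper sidesteps it entirely by a sharper case analysis. The paper first proves only \emph{non-strict} midpoint concavity for arbitrary minimizing sequences, derives $E_{\alpha+\beta}\leq E_\alpha+E_\beta$, and reserves strictness for the case where $E_\beta$ is actually \emph{attained} (Lemmas~\ref{Lemma:2.1}(ii) and~\ref{Lemma:2.2}). Then, in the dichotomy step of Theorem~\ref{them1.1}, it splits into two subcases: either the weak limit $u$ has $J(u)=E_\beta$, so $E_\beta$ is attained and the conditional strict subadditivity applies, or $J(u)>E_\beta$, in which case the strict inequality comes for free from $J(u)>E_\beta$ and only \emph{non-strict} subadditivity is needed. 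Either way one contradicts $E_{\alpha-\beta}<E_{\alpha-\beta}$. This avoids any need to quantify the Jensen gap along a minimizing sequence. Your proposed route --- ``rule out vanishing so the pointwise convexity gap integrates to something bounded away from zero'' --- can likely be made to work, but it is not a two-line fix: for small $u_n$ the gap $\theta G(t_1 u_n^2)+(1-\theta)G(t_2 u_n^2)-G(u_n^2)$ degenerates, and you would need non-vanishing plus local strong $L^2$ compactness to locate a region where $|u_n|$ stays uniformly bounded away from zero, which is close to re-proving the very compactness you are trying to establish.

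Second, there is a concrete error in your treatment of (iii). If $f(s_0)=0$ then by (F4) the function $s\mapsto f(s)/s$ is strictly increasing, so $f<0$ on $(0,s_0)$ and hence $F(s_0)=\int_0^{s_0}f<0$, not $>0$. A plateau of height $s_0$ therefore makes $-\int F$ \emph{positive}, pushing $J$ up rather than to $-\infty$. The correct observation, which the paper uses, is that (F4) together with $f(s_0)=0$ forces $f(s)\geq \frac{f(2s_0)}{2s_0}\,s$ for $s\geq 2s_0$, so $F(s)\to+\infty$ as $s\to+\infty$; one then chooses a compactly supported $u_0$ of \emph{large} height so that $\int F(u_0)>0$, and the scaling $u_0(t^{-1/N}\cdot)$ drives $E_\alpha\to-\infty$ as $\alpha\to+\infty$.
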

By the classical result of \cite{CL82} for $L^2$ subcritical problems,
the attainability for the minimization problem \eqref{Ea}, in some sense, is equivalent to the strict subadditive inequality 
\[
E_{\alpha+\beta}<E_{\alpha}+E_{\beta}. 
\]
In \cite{Shibata}, for a class of general Berestycki-Lions type nonlinearities (\cite{bl83-1,bl83-2}) such that  
 $f(s)/s\to 0$ as $s\to 0$,  the energy proved to be nonpositive and nonincreasing. Moreover, it seems that the strict subadditivity holds  only when the energy  $E_{\alpha+\beta}$  is negative.  
In our setting, there is a difference that $E_\alpha$ is positive and strict increasing for small $\alpha$.
Our strategy to obtain the strict subadditivity is to use the strong concavity of $E_\alpha$, that is the merit of   (F4).

Next we study \eqref{1.1}. We will construct normalized solutions with $\ell$-peaks if the potential $V$ has a local maximum point.
In light of \cite{zhangzhang} and \cite{TZZZ},
the following limiting system for \eqref{1.1} is important
\begin{equation}\label{eqsystem}
  \begin{cases}
  -\Delta u_j=f(u_j)+\lambda u_j \  \ \text {in}\ \ \mathbb R^N,\\
  u_j(x)>0,\  \lim_{|x|\to\infty}u_j(x)=0,\quad i=1,2, \cdots, \ell\\
  \sum_{i=1}^{\ell}|u_j|_2^2=\alpha.
  \end{cases}
  \end{equation}
It is clear that this system \eqref{eqsystem} has a solution $(\lambda, u_1,\cdots, u_\ell)$ by Theorem \ref{them1.1} by setting $u_i\equiv u_0$ for a solution $(\lambda, u_0)$ to \eqref{equation1} with $\int_{\R^N}u_0^2=\ell^{-1}\alpha$. However, there is no uniqueness or nondegeneracy result  for this solution. In fact, we are even not sure that whether a solution $(\lambda, u_1,\cdots, u_\ell)$ to \eqref{eqsystem} would satisfy $\int_{\R^N}u_i^2=\ell^{-1}\alpha$ for each $i=1,\cdots,\ell$. To state our result, we give the assumptions on $V$ precisely,
\begin{enumerate}
	\item[(V1)] $V(x)\in C(\mathbb R^N)$   and $\liminf_{|x|\to\infty}V(x)|x|^{-2}>-\infty$;
\item[(V2)] There is a bounded domain
  $\Omega\subset {\mathbb{R}}^{N}$  such that $V\in C^1(\overline\Omega)$ and $$V_0:=\max\limits_{x\in \overline\Omega}V(x)>\max\limits_{x\in \partial\Omega}V(x);$$
  \item[(V3)] Let $\mathcal{V}=\left\{x \in \Omega \mid V(x)=V_{0}\right\} .$ Then for any open neighborhood $\widetilde{O}$ of $\mathcal{V},$ there exists an open set $O \subset \widetilde{O}$ such that
  $$
  \mathcal{V} \subset O \subset \overline{O} \subset \widetilde{O} \cap \Omega \quad \text{and}\quad
  \inf _{x \in \partial O}|\nabla V(x)|>0.
  $$
\end{enumerate}

To construct  solutions with $\ell$-peaks, we need another technical condition on the  nonlinearity.
 \begin{enumerate}
    \item[(F5)] $f\in C^1(0,+\infty)$, and for some $\sigma>0$   there hold
      \[ \limsup_{s \to 0^{+}} \Big|f'(s)-\sigma\log s\Big|<+\infty.\]
 \end{enumerate}
We show the following result.
\begin{theorem}\label{th1.1}
Suppose that (F1)--(F5) and  (V1)--(V3) hold. 
For any $\alpha\in (0, \alpha_N)$,  $\ell\in \N\setminus\{0\}$, there exists $\varepsilon_\ell>0$ such that for each $\varepsilon\in(0,\varepsilon_\ell)$, equation \eqref{1.1} admits a solution $(\lambda_\e, v_\e)$  satisfying 
\begin{enumerate}
  \item $v_\varepsilon>0$ has exact $\ell$ peaks  
  $x_{\varepsilon}^{1}, \cdots, x_{\varepsilon}^{\ell} \in \R^{N}$ satisfying $\displaystyle
\lim _{\varepsilon \to 0} \operatorname{dist}\left(x_{\varepsilon}^{j}, \mathcal{V}\right)=0 \  \text { for all } j \in\left\{1, \cdots, \ell\right\},
$
\item setting $u_{\varepsilon}(x)=v_{\varepsilon}( \varepsilon x),$ there exist a subsequence $\varepsilon_{j} \to 0$ such that
$$ \lambda_\e \to \lambda +V_0,\quad \mbox{and}\quad 
\left\|u_{\varepsilon_{j}}-\sum_{k=1}^{\ell} u_j\left(\cdot-x_{\varepsilon_{j}}^{k} / \varepsilon_{j}\right)\right\|_{H^{1}} \to 0 \quad \text { as } j \to \infty, 
$$
where $(\lambda, u_1,\cdots,u_\ell)\in \R\times H^1(\R^N)^\ell$ is a   solution   to the system 
\eqref{eqsystem}.

\item 
there exist  $C,c>0$ such that
$$v_\varepsilon(x)\leq C\sum_{j=1}^\ell e^{-c\varepsilon^{-2}|x-x^j_\e|^2}\quad \text{for}\ \ x\in\mathbb R^N.$$
\end{enumerate}

\end{theorem}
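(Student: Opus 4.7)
\emph{Plan.} I would first pass to the slow variable $u(x) = v(\varepsilon x)$, which converts \eqref{1.1} into
\[
  -\Delta u + V(\varepsilon x) u = f(u) + \lambda u, \quad \int_{\R^N} u^2 = \alpha,
\]
and recognize its solutions as critical points on $\mathcal{M}_\alpha$ of
\[
  J_\varepsilon(u) = \tfrac12 \int_{\R^N} |\nabla u|^2 + \tfrac12 \int_{\R^N} (V(\varepsilon x)-V_0)\, u^2 - \int_{\R^N} F(u),
\]
with Lagrange multiplier shifted to $\lambda + V_0$. Theorem~\ref{them1.1} applied at mass $\alpha/\ell$ supplies a positive ground state $u_0$, and using (F5) an elliptic barrier argument gives Gaussian decay of $u_0$ at infinity.

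Second, I would set up the $\ell$-peak variational framework. For small $\rho > 0$, define
\[
  X_{\varepsilon,\rho}(\ell) = \Big\{ u \in \mathcal{M}_\alpha \;\Big|\; \inf_{x^1,\dots,x^\ell \in \mathcal{V}} \Big\| u - \sum_{j=1}^\ell u_0(\cdot - x^j/\varepsilon) \Big\|_{H^1} < \rho \Big\},
\]
augmented by a localization of the local centers of mass in disjoint neighborhoods of the $x^j$; evaluating $J_\varepsilon$ on the sum of translates yields the upper bound
\[
  c_\varepsilon := \inf_{X_{\varepsilon,\rho}(\ell)} J_\varepsilon \leq \ell \, E_{\alpha/\ell} + o_\varepsilon(1).
\]

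The third and hardest step is the matching lower bound together with the fact that almost-minimizers cannot approach $\partial X_{\varepsilon,\rho}(\ell)$; this is exactly the lower gradient estimate that the authors advertise. Decomposing a near-minimizer $u \in X_{\varepsilon,\rho}(\ell)$ into $\ell$ localized pieces of masses $\beta_j$ with $\sum_j \beta_j = \alpha$ and centers $y^j$, one expects an expansion
\[
  J_\varepsilon(u) \geq \sum_{j=1}^\ell E_{\beta_j} + \tfrac12 \sum_{j=1}^\ell (V(\varepsilon y^j) - V_0)\, \beta_j + o_\varepsilon(1).
\]
By the \emph{strict concavity} of $\alpha \mapsto E_\alpha$ (Theorem~\ref{them1.1}(i)), $\sum_j E_{\beta_j} \geq \ell E_{\alpha/\ell}$ with a quadratic defect of order $\sum_j (\beta_j - \alpha/\ell)^2$, while (V3) provides a quantitative cost when $\varepsilon y^j$ leaves a small neighborhood of $\mathcal{V}$. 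Combined, these two controls should reconstruct the lower gradient estimate for $J_\varepsilon|_{\mathcal{M}_\alpha}$ along $\partial X_{\varepsilon,\rho}(\ell)$ without invoking either the high-regularity special flow of \cite{DelPino-Felmer2002} or the extra translation flow of \cite{Byeon-Tanaka,Byeontanaka}. I expect this to be the main obstacle: the non-Lipschitzian behavior forced by (F2) is precisely what rules out a standard negative gradient flow and makes some concavity-based substitute unavoidable.

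Finally, the gradient estimate allows a pseudo-gradient deformation of $J_\varepsilon|_{\mathcal{M}_\alpha}$ confined to $X_{\varepsilon,\rho/2}(\ell)$, producing a Palais--Smale sequence at level $c_\varepsilon$ whose weak limit, after a concentration--compactness splitting, is the desired $\ell$-peak solution, yielding (i) and (ii). For the Gaussian decay in (iii), (F5) makes the nonlinearity behave near each peak like $\sigma v \log v + O(v)$, for which a function of the form $C e^{-c \varepsilon^{-2}|x - x^j_\varepsilon|^2}$ is a supersolution of the linearized equation on an annular region around each peak; a standard comparison principle and a covering argument then give the claimed pointwise bound.
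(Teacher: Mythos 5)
First, the direction of the concavity inequality is reversed. You claim that strict concavity of $\alpha\mapsto E_\alpha$ gives $\sum_{j=1}^\ell E_{\beta_j}\geq \ell E_{\alpha/\ell}$ with a quadratic defect. But concavity combined with Jensen's inequality (applied to the points $\beta_1,\dots,\beta_\ell$ with equal weights $1/\ell$ and mean $\alpha/\ell$) gives exactly the opposite:
\[
\frac{1}{\ell}\sum_{j=1}^\ell E_{\beta_j}\;\leq\; E_{\alpha/\ell},
\]
with strict inequality when the $\beta_j$ are not all equal. So splitting mass \emph{unevenly} strictly \emph{lowers} $\sum_j E_{\beta_j}$. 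This is precisely what the paper's inequality \eqref{eq 4444} records: $\mathbb{J}(\gamma_0(\boldsymbol s))<\ell E_{\ell^{-1}\alpha}$ for $\boldsymbol s\neq\boldsymbol s^0$. The uneven split is a \emph{descent} direction, so a minimization over $X_{\varepsilon,\rho}(\ell)$ has no reason to pin the masses to $\alpha/\ell$ --- in fact near-minimizers would escape toward unequal splits.

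Second, and consequently, the geometry is not a local minimization at all. Because $V$ has a local \emph{maximum} and the potential term enters $J_\varepsilon$ with the factor $+\tfrac12 V(\varepsilon y^j)\beta_j$, moving the centers away from $\mathcal V$ also \emph{decreases} the energy. The target critical value $\ell E_{\ell^{-1}\alpha}+\tfrac12 V_0\alpha$ is a saddle/min-max level, realized in the paper via a mapping degree argument on the parameter set $A(L)/\!\approx$ (Propositions \ref{pro5.2}--\ref{pro5.4}); the equal split and the centering at $\mathcal V$ both \emph{maximize} the relevant terms. Your proposed level $c_\varepsilon=\inf_{X_{\varepsilon,\rho}(\ell)}J_\varepsilon$ therefore does not capture the sought solution, and the repelling character of the local maximum is exactly why the lower gradient estimate cannot be obtained from a coercivity/defect inequality. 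The paper's actual substitute for the del Pino--Felmer $H^2$ flow and the Byeon--Tanaka translation flow is neither of the two things you describe: it is a priori decay in an exterior region (via the penalization $\Phi_\varepsilon$, Proposition \ref{lembound}), followed by the construction of an auxiliary $w_\varepsilon$ solving a convex minimization problem on a hyperplane (Lemma \ref{lemma42}) and a Pohozaev-type balance using the test function $\partial_{x_1}(\psi_\varepsilon w_\varepsilon)$ to contradict $|\nabla V|\geq\nu_0$ on the annular region (Proposition \ref{propo4.1}). Your outline contains none of this machinery, and the ingredient you propose in its place (concavity producing a quadratic lower-bound defect) is not available.

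Smaller but nontrivial omissions: the functional must be set up on the weighted space $H_\varepsilon$ and with the cutoffs $\Psi_\varepsilon$, $\Phi_\varepsilon$ and the truncated nonlinearity $f_{K_0}$, since $V$ may be unbounded below (e.g.\ $V(x)=-|x|^2$ is admitted under (V1)); working directly with $J_\varepsilon$ on $\mathcal M_\alpha\subset H^1$ is not well-posed. The final step (i) of Theorem~\ref{th1.1} also requires showing that the penalizations vanish on the critical point found, which your sketch does not address.

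Your concluding ideas about the Gaussian decay via (F5), a supersolution of the form $Ce^{-c\varepsilon^{-2}|x-x_\varepsilon^j|^2}$, and a comparison argument are in the right spirit and match the paper's Section 5 and Lemma \ref{decay}; but the main deformation step must be rebuilt along the lines above.
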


To find critical points in a neighborhood of the approximate solutions,
following the idea of  \cite{Coti1,Coti,sere}, a crucial step to make  deformation  is to obtain a uniform gradient estimate for the functional in an annular domain, 
i.e., a uniform lower bound for the norm of gradient of the functional in an annular domain. 
The uniform gradient estimate can be obtained when we search for critical points near local minimum points of $V$.
This is because, by the characteristic of local minimum and   monotonicity property of least energy for the limiting problem,
the functions near the approximate solutions with energy no greater than the least energy will concentrate 
to the local minimum of $V$.
The situation becomes   more  complicated for general  saddle points or maximum points.
Actually, the repelling property of such critical points makes it impossible to obtain the uniform gradient estimate
since the barycenters (or local centers of mass) of functions near the approximate solutions will tend to deviate from the critical points to decrease its energy.
Here we refer to \cite[CHAPTER 6]{Byeon-Tanaka} for a counterexample in this case.
Therefore, another much more delicate lower gradient estimate should be obtained for functions whose barycenters are away from 
the critical point,
 in order that barycenters of functions along the negative gradient flow would not move too far away
 before the energy is deformed to a given lower level.

We explain two methods from \cite{DelPino-Felmer2002} and \cite{Byeontanaka,Byeon-Tanaka}  to deal with this difficulty in nonlinear Schr\"odinger equations  without $L^2$ constraint.
In del Pino and Felmer \cite{DelPino-Felmer2002},  
that lower gradient estimate is obtained for the energy functional only at functions having a uniform $H^2$ bound. 
Thus, the authors defined a special negative gradient flow on the Nehari manifold, and they are able to 
  show the uniform $H^2$ bounds for functions along the flow if the flow starts from a suitable
  test path with a well-chosen set of initial conditions.
 Another method is developed by Byeon and Tanaka in \cite{Byeontanaka,Byeon-Tanaka}. They introduced another decreasing flow, i.e., the translation flow generated by 
  $-\nabla V$.
  They  are able to bypass the obstacle in obtaining the lower gradient estimate through several steps of iterations among 
  the negative pseudogradient flow of the energy functional, the  tail-minimizing
  operator that keeps tails small, and the translation flow.

   There are essential difficulties in applying those two methods in our setting. First, it is important    
   to obtain the global  $H^2$ regularity uniformly for the special flow in \cite{DelPino-Felmer2002}. 
   This relies on some  stronger conditions   on the nonlinearity $f$.  
     However, the nonlinearity in this paper having non-lipschitzian properties is not good enough for us to obtain the global  $H^2$ regularity uniformly 
     along the pseudogradient flow.  See Remark \ref{rek4.3} (i) for more discussions. 
 On the other hand, although the arguments in \cite{Byeontanaka,Byeon-Tanaka}  works well for   nonlinear Schr\"odinger equations without an $L^2$-constraint
 in very weak conditions, it heavily depends on the use of an tail-minimizing operator, 
 which is defined by solving a local minimization problem in an exterior domain with some prescribed boundary condition.
However, in the situation with an $L^2$ constraint, it is difficult to perform the minimization
argument locally on the $L^2$ spheres.

  In this paper, we will develope another approach to deal with this problem. 
  In fact, at every function whose local centers of mass are away from the 
  maximum points of $V$,
  we are able to obtain the desired lower  gradient estimate, 
  without assuming uniform $H^2 $ bounds on these functions.
We explain  our strategy  
    as follows.
We first introduce a new penalization functional  so that 
a priori decay estimate in some  exterior region away from the local centers of mass of the functions can be obtained.   
This estimate implies that  the exterior norms of the functions can be controlled by  
the gradient of the  energy functional.
Thus,  we can get 
rather fine decay for the functions which do not meet the desired lower  gradient estimate.
We are able to find good replacements of these functions by introducing an elliptic equation which is defined  by a minimization problem in a hyperplane of the Sobolev space.
 Then a contradiction could be obtained by a check of balance of the elliptic equation.
 We remark that our idea applies likewise to nonlinear Schr\"odinger equations without $L^2$-constraints under very weak conditions on the nonlinearity. In fact,
 it works well to the situation of \cite{Byeontanaka,Byeon-Tanaka}. We will explain it later in Remark \ref{rek4.3} (ii).

 At last, we mention that our assumptions (F1)--(F5) cover the nonlinearity of   logarithmic type. By  shift invariant property of the logarithmic Schr\"odinger equation (see \cite{Tanaka-Zhang,ITWZ}), we can 
 give a multiplicity result in the setting without an $L^2$-constraint.
  By Theorem \ref{th1.1}, it is easy to verify that $w_\e = e^{\lambda_\e/2}v_\e$ is a solution to 
    the following logarithmic Schr\"odinger equation (without an $L^2$ constraint condition):
  \begin{equation}\label{eq7777}
    -\varepsilon^2 \Delta w+ V(x)w=w\log w^2,\quad w\in H^1(\R^N).
  \end{equation}
  \begin{corollary}\label{cor1}
    Assume (V1)--(V3). Then for any $\ell\in \mathbb N\setminus\{0\}$, there is $\e_\ell>0$ such that for each $\varepsilon\in(0,\varepsilon_\ell)$, equation \eqref{eq7777} admits a solution $  v_\e$ positive solution with $\ell$ peaks, which concentrate to $\mathcal V$ as $\e\to 0$. 
  \end{corollary}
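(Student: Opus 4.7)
The plan is to reduce Corollary~\ref{cor1} to Theorem~\ref{th1.1} applied to the logarithmic nonlinearity $f(s)=s\log s^{2}$, and then remove the Lagrange multiplier via the shift invariance of the logarithmic Schr\"odinger equation recalled just before the corollary statement. The genuinely difficult construction of an $\e$-small clustering $\ell$-peak solution is entirely contained in Theorem~\ref{th1.1}, so what remains is only the algebraic transfer and a verification of (F1)--(F5).

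\textbf{Step 1: check (F1)--(F5) for $f(s)=s\log s^{2}$.} Extending by $f(0)=0$ gives $f\in C(\R,\R)$, so (F1) holds. As $s\to 0^{+}$, $f(s)/s=\log s^{2}\to-\infty$, giving (F2). As $s\to+\infty$, $f(s)/s^{1+4/N}=s^{-4/N}\log s^{2}\to 0$, so (F3) holds with $c_{0}=0$; in particular $\alpha_{N}=+\infty$, so every mass is admissible. The quotient $s^{-1}f(s)=2\log s$ is strictly increasing on $(0,+\infty)$, giving (F4). Finally $f'(s)=2\log s+2$, so with $\sigma=2$ the quantity $|f'(s)-\sigma\log s|\equiv 2$ is bounded near $0^{+}$, yielding (F5).

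\textbf{Step 2: apply Theorem~\ref{th1.1}.} Fix any $\alpha>0$ (admissible since $\alpha_{N}=+\infty$) and invoke Theorem~\ref{th1.1}. For each $\ell\in\N\setminus\{0\}$ there exists $\e_{\ell}>0$ such that for every $\e\in(0,\e_{\ell})$ we obtain $\lambda_{\e}\in\R$ and a positive $\ell$-peak solution $v_{\e}$ of
\[
-\e^{2}\Delta v_{\e}+V(x)v_{\e}=v_{\e}\log v_{\e}^{2}+\lambda_{\e}v_{\e},\qquad\int_{\RN}v_{\e}^{2}=\alpha\e^{N},
\]
with peaks $x_{\e}^{1},\dots,x_{\e}^{\ell}$ satisfying $\dist(x_{\e}^{j},\mathcal V)\to 0$ as $\e\to 0$.

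\textbf{Step 3: remove the multiplier.} Set $w_{\e}:=e^{\lambda_{\e}/2}v_{\e}$. Using the pointwise identity $w_{\e}\log w_{\e}^{2}=e^{\lambda_{\e}/2}\bigl(v_{\e}\log v_{\e}^{2}+\lambda_{\e}v_{\e}\bigr)$, a one-line substitution into the equation for $v_{\e}$ shows that $w_{\e}$ solves \eqref{eq7777}. Since $e^{\lambda_{\e}/2}$ is a positive constant, multiplication preserves positivity and the location of local maxima, so $w_{\e}$ inherits exactly $\ell$ peaks at the points $x_{\e}^{j}$, and these concentrate at $\mathcal V$ by Theorem~\ref{th1.1}(i); the Gaussian decay in Theorem~\ref{th1.1}(iii) likewise transfers up to the scalar factor. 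I do not anticipate any substantive obstacle; the only mildly delicate point is confirming (F5), which is exactly where the choice $\sigma=2$ absorbs the logarithmic divergence of $f'$ at the origin.
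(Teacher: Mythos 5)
Your proof is correct and follows essentially the same route as the paper: verify (F1)--(F5) for $f(s)=s\log s^2$ (with $c_0=0$, $\sigma=2$), apply Theorem~\ref{th1.1}, and use the shift $w_\e=e^{\lambda_\e/2}v_\e$ to absorb the Lagrange multiplier, which the paper states explicitly in the sentence preceding the corollary.
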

We also comment  that our assumptions on $V$ includes a class of strong repulsive potentials, for example, $V(x)=-|x|^2$ (see \cite{zhang1,CS}). Corollary \ref{cor1}  in fact gives the existence result of multiple nonradial solutions  for such repulsive potentials when $\e$ is small.

\begin{notation}
   Throughout this paper,
$2^*=+\infty$ for $N=1,2$ and $2^*=\frac{2N}{N-2}$ for $N\geq 3$; $ L^p(\mathbb R^N) \ (1\leq p<+\infty)$ is the usual Lebesgue space with the norm $ |u| _p^p=\int_{\mathbb R^N}|u|^p;$
$ H^1(\mathbb R^N)$ denotes the Sobolev space with the norm $\|u\|^2=\int_{\mathbb R^N}(|\nabla u|^2+|u|^2);$
$o_n(1)$ (resp. $o_\varepsilon(1)$)
will denote a generic infinitesimal as $n\rightarrow \infty$ (resp. $\varepsilon\rightarrow 0^+$);  $B (x,\rho)$ denotes an open ball centered at $x\in\mathbb R^N$ with radius
$\rho>0$.
$a^\pm=\max\{0,\pm a\}$ for $a\in\mathbb R$.
 Unless stated otherwise,   $C, C'$ and $c$ are general constants.
 
\end{notation}

\section{The least energy for the  autonomous problem}
In this section, we solve the following minimization problem 
\begin{equation}
 E_\alpha= \inf\Big\{J(u)=\frac12\int_{\R^N}|\nabla u|^2-\int_{\R^N}F(u)\ \Big|\ {u\in \mathcal M_\alpha} \Big\},
\end{equation}
where $\mathcal M_\alpha=\set{u\in H^1(\R^N) | \int_{\R^N} u^2=\alpha }$, $\alpha\in(0,\alpha_N)$, $F(s)=\int_{0}^{|s|} f(\tau) d \tau$, and $f$    satisfies
(F1)--(F4).
We first note that under the assumption (F1), either of  the following conditions is equivalent to (F4). 
  \begin{enumerate}
    \item[(F4')] The function $t\mapsto F(\sqrt{t})$ is strictly convex for $t>0$.
    \item[(F4'')] 
     $F(\sqrt{1-s}u)+F(\sqrt{1+s}u)>2F(u)$ for $s\in(0,1)$, $u\neq 0$.
  \end{enumerate}
  In fact, 
  if (F4) holds, we have
  \begin{equation}
   \frac{\rd}{\rd s}\left(F(\sqrt{1-s}u)+F(\sqrt{1+s}u)\right)=\frac{u^2}2\left(\frac{f(\sqrt{1+s} u)}{\sqrt{1+s} u}
   -\frac{f(\sqrt{1-s}u)}{\sqrt{1-s}u}\right)>0.
  \end{equation}
  Then (F4'') follows from (F4).
On the other hand, (F4'')
implies that the  function $t\mapsto F(\sqrt{t})$ is strictly midpoint convex. Thus, it is strictly convex by continuity.
Hence, (F4'') implies (F4'). At last, (F4')
implies that $\frac{\rd}{\rd t} F(\sqrt{t})= \frac{f(\sqrt{t})}{2\sqrt{t}}$ is strictly increasing, which is exactly (F4).

By (F2)  and (F4), $f$ admits at most one zero  in $(0,+\infty)$. Hence, we   set   $t_0=+\infty$ 
if  $f$ is negative in $(0,+\infty)$, and $t_0$ to be
the unique zero of $f$ if $f$ changes its sign in $(0,+\infty)$. 
 We set
 \[ 
  f_1(t)=\begin{cases}
    f^-(t), &t\geq 0\\
    - f^-( -t), & t <0,
  \end{cases}
  \quad 
  f_2(t)=\begin{cases}
    f^+(t), &t\geq 0,\\
    - f^+( -t), & t<0,
  \end{cases}
  \]
  \[ 
  F_1(t)=\int_0^{t} f_1(s) \rd s,
  \quad 
  F_2(t)=\int_0^{t} f_2(s) \rd s.
  \]
  Then 
  \begin{equation}\label{HG}
    F_1(t)= \begin{cases}
      -F(t),& |t|\in[0, t_0),\\
      -F(t_0), &|t|\in[t_0,+\infty),
    \end{cases}
    \quad 
    F_2(t)= \begin{cases}
      0,& |t|\in[0, t_0),\\
      F(t)-F(t_0), &|t|\in[t_0,+\infty).
    \end{cases}
    \end{equation}
    Remark that  $F_1(t)=-F(t)$ and $F_2(t)=0$ in the case $t_0=+\infty$.

    \begin{lemma}\label{F12} 
      Assume (F1)--(F4),
      The following statements hold.
      \begin{enumerate}
        \item For $t>0$, $F_1(\sqrt t)$ is nondecreasing and concave, and $F_2(\sqrt t)$ is nondecreasing and convex.
        \item There is $C>0$ such that for each $t>0$
        \begin{equation}\label{5}
          f(t)\leq f_2(t) \leq C t^{1+\frac 4 N} \quad \mbox{and}\quad F(t)\leq F_2(t) \leq C t^{2+\frac 4 N}.
        \end{equation}
        Moreover, for any $\tau>0$ there is $C_\tau>0$ such that
        \begin{equation} \label{6}
         f(t) \leq   (c_0+\tau) t^{1+\frac 4 N} +C_\tau t  \quad \mbox{and}\quad
          F(t)\leq  (c_0+\tau) t^{2+\frac 4 N} +C_\tau t^2.
        \end{equation}
        \item $t\mapsto F(t)/t^2$ is strictly increasing for $t>0$ and  $f(s)s>2F(s)$   a.e. $s\in\R\setminus\{0\}$.
        Similarly,
        $t\mapsto F_1(t)/t^2$ is   nonincreasing for $t>0$ and  $f_1(s)s\leq 2F_1(s)$,    $s\in\R$.
         
      \end{enumerate}
      
    \end{lemma}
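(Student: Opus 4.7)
The statement decomposes into three essentially computational assertions, all of which hinge on unwinding the formulas in \eqref{HG} and exploiting (F4), or equivalently the convexity property (F4'). The plan is to first record a structural observation that drives everything: by (F2) together with the strict monotonicity of $s\mapsto f(s)/s$, one has $f<0$ on $(0,t_0)$ and $f>0$ on $(t_0,+\infty)$, with $f(t_0)=0$ when $t_0<+\infty$. This pins down exactly where $f_1$ and $f_2$ vanish and reduces each item to a one-sided check.

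For (i), I would differentiate: on $(0,t_0^2]$ the derivative of $F_1(\sqrt{t})$ equals $-\tfrac{1}{2}\bigl(f(s)/s\bigr)\big|_{s=\sqrt{t}}$, which is nonnegative and, by (F4), strictly decreasing in $t$; on $[t_0^2,+\infty)$ the function is constant. The two pieces match $C^1$-ly at $t=t_0^2$ because $f(t_0)=0$, yielding the claimed concavity and monotonicity on the whole half-line. The analysis for $F_2(\sqrt{t})$ is symmetric: its derivative $\tfrac{1}{2}\bigl(f(s)/s\bigr)\big|_{s=\sqrt{t}}$ is zero up to $t_0^2$ and strictly increasing afterwards, again gluing $C^1$-ly at $t_0^2$.

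For (ii), the bound $f(t)\le f_2(t)$ is immediate from $f_2=f^+$. To obtain $f_2(t)\le C\, t^{1+4/N}$, I would combine (F3) on $[M,+\infty)$ with continuity of $f(s)/s^{1+4/N}$ on $[t_0,M]$ (using $f(t_0)=0$) and $f_2\equiv 0$ on $[0,t_0]$; the bound on $F_2$ then follows by integration. For the sharper inequality, (F3) absorbs $f(t)$ into $(c_0+\tau)t^{1+4/N}$ for $t\ge M$, while on $(0,M]$ the monotonicity of $s\mapsto f(s)/s$ together with (F2) gives $f(s)/s\le f(M)/M$, producing the term $C_\tau\, t$. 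Integrating yields the analogous bound on $F$.

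For (iii), the key identity $f(t)t>2F(t)$ for every $t>0$ follows from a one-line integration: by (F4), $f(s)/s<f(t)/t$ for $0<s<t$, hence $f(s)<sf(t)/t$, and integrating from $0$ to $t$ gives $F(t)<tf(t)/2$. This forces strict monotonicity of $F(t)/t^2$. The corresponding statements for $F_1$ reduce to this after noting that $F_1(t)/t^2=-F(t)/t^2$ is strictly decreasing on $(0,t_0)$, while on $[t_0,+\infty)$ one has $F_1(t)/t^2=-F(t_0)/t^2$ with $F(t_0)<0$ (again because $f<0$ on $(0,t_0)$), hence also decreasing; the pointwise inequality $f_1(s)s\le 2F_1(s)$ is trivial where $f_1$ vanishes and is the strict inequality $f(s)s>2F(s)$ with a sign flip otherwise. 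The only mildly delicate point in the whole argument is the $C^1$ matching at the threshold $t_0^2$ in part (i), but once $f(t_0)=0$ is invoked this becomes immediate; I do not anticipate any serious obstacle.
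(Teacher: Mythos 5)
Your argument is correct throughout. The real divergence from the paper is in part (iii). The paper's route is to first establish strict monotonicity of $t\mapsto F(t)/t^2$ by invoking the strict convexity of $s\mapsto F(\sqrt{s})$ directly: taking $t>1$ and writing $t^{-2}(tu)^2+(1-t^{-2})\cdot 0=u^2$, strict convexity gives $t^2F(u)<F(tu)$, and then $f(t)t>2F(t)$ is extracted only a.e.\ by differentiating the increasing function $F(t)/t^2$. You instead integrate (F4) directly: $f(s)<s\,f(t)/t$ for $0<s<t$, so $F(t)=\int_0^t f(s)\,ds<\int_0^t s f(t)/t\,ds=\tfrac{1}{2}t f(t)$, and then strict monotonicity of $F(t)/t^2$ falls out from the sign of the derivative. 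Your route is a bit more elementary (no appeal to (F4') or to the midpoint-convexity reformulation) and in fact yields the pointwise strict inequality $f(t)t>2F(t)$ for every $t>0$, which is marginally stronger than the paper's a.e.\ statement; the paper's route has the advantage of reusing (F4') which it has already set up for Lemma~\ref{Lemma:2.1}. For parts (i) and (ii) the paper merely cites (F4') and (F2)--(F3) without detail, and the computations you supply --- the derivative $\tfrac{1}{2}f_i(\sqrt{t})/\sqrt{t}$ glued at $t_0^2$ using $f(t_0)=0$, and the two-region bound $[0,M]\cup[M,\infty)$ for (ii) --- are exactly the ones intended.
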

    \begin{proof}
      (i) follows from the definition of $F_1$ and $F_2$, and (F4'). (ii) follows from (F2)(F3). 
      By (F4'), we have
       \begin{equation}\label{AR1}
         t^2 F(u)= t^2 F(\sqrt{t^{-2}(tu)^2+(1-t^{-2})0}) < F(tu)+( t^{2}-1)F(0)=F(tu), \quad \mbox{for}\quad t>1, u\neq 0,
       \end{equation}
       implying that $t\mapsto F(t)/t^2$ is strictly increasing for $t>0$.
      Differentiating  $F(t)/t^2$, we know  $f(t)t>2F(t)$   a.e. $t>0$. This inequality holds almost everywhere in  $\R$ by symmetry.
    \end{proof}

By \eqref{6} and the Gagliardo--Nirenberg inequality \eqref{GN},
it is clear that  $E_\alpha$ is well-defined for each $\alpha\in (0, \alpha_N)$, where $\alpha_N:=(2c_0 S(N))^{-\frac N2}$ if $c_0>0$, $\alpha_N=+\infty$ if $c_0=0$.
    \begin{lemma}
      If $E_\alpha$ is attained by some  $u$, then $f(u)u\in L^1(\R^N)$ and $u$
      satisfies
      \[
      -\Delta u=f(u)+\lambda u,  
      \]
      where
      \[
        \lambda =\alpha^{-1}\left(\int_{\R^N}|\nabla u|^2-\int_{\R^N} f(u)u\right).
        \]
    \end{lemma}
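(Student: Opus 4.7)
The plan is a Lagrange-multiplier argument on the sphere $\mathcal M_\alpha$, carried out with extra care because the non-Lipschitz behaviour of $f$ at the origin (from (F2)) prevents $J$ from being $C^1$ on $H^1(\R^N)$. The decomposition $f=f_2-f_1$ and the bounds supplied by Lemma~\ref{F12} will do most of the work. I would first verify that $f(u)u\in L^1(\R^N)$: writing $f(u)u=f_2(u)u-f_1(u)u$, the bound $|f_2(u)u|\le C|u|^{2+4/N}$ from Lemma~\ref{F12}(ii) lies in $L^1$ since $u\in H^1(\R^N)\hookrightarrow L^{2+4/N}(\R^N)$, while the pointwise inequality $f_1(s)s\le 2F_1(s)$ from Lemma~\ref{F12}(iii) reduces the other half to $F_1(u)\in L^1$, which follows from $J(u)=E_\alpha\in\R$, $F_2(u)\in L^1$ and $J(u)=\tfrac12\int|\nabla u|^2+\int F_1(u)-\int F_2(u)$.

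Next I would derive the Euler--Lagrange identity by a normalized perturbation. For $\phi\in C_c^\infty(\R^N)$ set
\[
c(t)=\sqrt{\alpha}/|u+t\phi|_2,\qquad v_t=c(t)(u+t\phi)\in \mathcal M_\alpha,
\]
so that $c(0)=1$, $c'(0)=-\alpha^{-1}\int u\phi$ and $\partial_t v_t|_{t=0}=\phi-\alpha^{-1}(\int u\phi)u=:h$. Since $u$ is a global minimizer, $J(v_t)\ge J(u)$ for all small $t$, so once $\tfrac{d}{dt}J(v_t)|_{t=0}$ is known to exist it must vanish. The kinetic term $\tfrac12 c(t)^2\int|\nabla(u+t\phi)|^2$ is polynomial in $t$ and contributes $\int\nabla u\nabla\phi-\alpha^{-1}(\int u\phi)\int|\nabla u|^2$.

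The main obstacle is justifying the nonlinear part,
\[
\frac{d}{dt}\int F(v_t)\Big|_{t=0}=\int f(u)\,h,
\]
by dominated convergence. Pointwise a.e.\ convergence of $(F(v_t)-F(u))/t$ to $f(u)h$ comes from the mean value theorem applied to $F$ (which is $C^1$ because $f$ is continuous) together with $v_t\to u$ and $\partial_t v_t|_0=h$. A dominator for the $F_2$-contribution is immediate from $|f_2(s)|\le C|s|^{1+4/N}$ and the uniform bound $|v_t|\le C(|u|+|\phi|)$. For the $F_1$-contribution I would split the integration domain: off $\mathrm{supp}\,\phi$ one has $v_t=c(t)u$ with $c(t)$ close to $1$, and the scaling $F_1(Cs)\le C^2 F_1(s)$ for $C\ge 1$ (a consequence of the concavity of $F_1(\sqrt{\cdot})$ in Lemma~\ref{F12}(i)) combined with $f_1(\xi)\xi\le 2F_1(\xi)$ produces a pointwise bound $|F_1(v_t)-F_1(u)|/|t|\le C F_1(|u|)\in L^1$; on the compact set $\mathrm{supp}\,\phi$ one splits further at a level $\{u<\delta\}$ versus $\{u\ge\delta\}$ and uses continuity of $f_1$ on $[0,\delta]$ and $f_1(u)\le 2F_1(u)/\delta$, together with the pointwise control of $h$, to produce an $L^1$ dominator.

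Collecting the pieces of $\tfrac{d}{dt}J(v_t)|_{t=0}=0$ gives
\[
\int\nabla u\nabla\phi-\int f(u)\phi=\lambda\int u\phi\qquad\text{for all }\phi\in C_c^\infty(\R^N),
\]
with $\lambda=\alpha^{-1}\bigl(\int|\nabla u|^2-\int f(u)u\bigr)$, as forced by the computation; inserting $\phi=u$ reconfirms the formula. By density this identity extends to every $\phi\in H^1(\R^N)$, so $u$ solves $-\Delta u=f(u)+\lambda u$ in the weak sense.
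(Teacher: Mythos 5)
Your argument is correct in outline but takes a genuinely different route from the paper's. The paper avoids direct differentiation of $J$ on the sphere entirely: it approximates the minimizer by truncations $u_n=\alpha^{1/2}|\varphi(n^{-1}\cdot)u|_2^{-1}\varphi(n^{-1}\cdot)u\in H_0^1(B_n)$, notes $J(u_n)\to E_\alpha$, invokes Ekeland's variational principle on the finite-ball constrained problem to produce multipliers $\lambda_n$ with $\|J'(u_n)-\lambda_n u_n\|_{H^{-1}(B_n)}\to 0$, and passes to the limit using $F_1(2u)\le 4F_1(u)$ and Lebesgue convergence to identify $\lambda_n\to\lambda$ and recover the Euler--Lagrange equation against $C_c^\infty$ test functions. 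You instead differentiate $t\mapsto J(c(t)(u+t\phi))$ directly at $t=0$, which is philosophically cleaner and shows more explicitly why the multiplier is $\alpha^{-1}(\int|\nabla u|^2-\int f(u)u)$, but it shifts the entire burden onto justifying the exchange of limit and integral for the $F_1$ part, which the paper's route sidesteps by working on bounded domains where the functional is genuinely $C^1$. Both ultimately rest on the same structural facts from Lemma~\ref{F12}: $f_1(s)s\le 2F_1(s)$, monotonicity of $F_1$, and the doubling property $F_1(2s)\le 4F_1(s)$ coming from concavity of $F_1(\sqrt{\cdot})$.

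One spot in your dominated-convergence argument on $\mathrm{supp}\,\phi$ deserves tightening. On $\{|u|\ge\delta\}\cap\mathrm{supp}\,\phi$ the mean-value point $\xi$ satisfies $|\xi|\ge\delta/2$, and your bound $f_1(\xi)\le 2F_1(\xi)/|\xi|\le CF_1(|u|)/\delta$ combined with $|v_t-u|/|t|\le C(|u|+|\phi|)$ gives a dominator of the form $F_1(|u|)\cdot|u|$, which is not obviously in $L^1$ from $F_1(|u|)\in L^1$ and $u\in L^2$ alone. The cleaner route is to note that (F3)--(F4) force $f_1$ to grow at most linearly: either $f$ has a zero $t_0<\infty$ and $f_1$ vanishes on $[t_0,\infty)$ (hence is bounded), or $f<0$ everywhere so $f(s)/s$ is negative, increasing, and bounded above by $0$, giving $f_1(s)=-f(s)\le C(1+s)$. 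Then $|f_1(\xi)|\le C(1+|u|+|\phi|)$ and the dominator is $C(1+|u|+|\phi|)^2\in L^1(\mathrm{supp}\,\phi)$ with no delicacy at all. With that replacement the proposal is sound.
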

    \begin{proof}
      By Lemma \ref{F12} (iii), 
      $\int_{\R^N} f(u)u\geq 2\int_{\R^N} F(u)>-\infty$. By this and \eqref{5}, $f(u)u\in L^1(\R^N)$.
Note that 
\[
E_\alpha \leq   E_{n,\alpha} :=\Set{J(u) | u\in H_0^1(B_n), \int_{B_n} u^2=\alpha }.
\]   
Taking $\varphi\in C_0^\infty (B_1)$ with $0\leq \varphi\leq 1$ in $B_1$, $\varphi=1$ in $B_{1/2}$,
we set 
\[
u_n= \alpha^{1/2} |\varphi(n^{-1}\cdot) u|_2^{-1} \varphi(n^{-1}\cdot) u.
\]
Then it is easy to check that 
\[
  u_n\to u\quad \mbox{in}\ H^1(\R^N),\quad \frac12\int_{\R^N}|\nabla u_n|^2-\int_{\R^N}F_2(u_n)\to \frac12\int_{\R^N}|\nabla u|^2-\int_{\R^N}F_2(u).
\]
On the other hand, 
since $|u_n|\leq 2 |u|$, we have  $F_1(u_n)\leq F_1(2 u)$.
Similarly to \eqref{AR1}, $F_1(2u)\leq 4F_1(u) \in L^1(\R^N)$.
Hence, by the Lebesgue convergence theorem,
\[
  E_{n,\alpha}\leq  J(u_n)\to E_\alpha .  
\]

By the Ekeland variational principle, there is $\lambda_n\in\R$ such that 
\[
\|J'(u_n)-\lambda_n u_n\|_{H^{-1}(B_n)}\to 0.  
\]
Since 
$f_1(u_n)u_n\leq 2F_1(u_n)$, we can conclude that 
\[\lambda_n|u_n|_2^2=J'(u_n)u_n+o_n(1)\to \frac12\int_{\R^N}|\nabla u|^2-\int_{\R^N} f(u)u.\]
 Hence, 
 \[
  \lambda_n \to \lambda =\alpha^{-1}\left(\frac12\int_{\R^N}|\nabla u|^2-\int_{\R^N} f(u)u\right).
 \]
 On the other hand, for any $\varphi\in C_0^\infty(\R^N)$, we have  $\supp\varphi\subset B_n$ when   $n$ is sufficiently large,
and hence
 \[
  J'(u_n)\varphi-\lambda_n \int_{B_n}u_n \varphi\to 0. 
 \]
Thus, 
$u$ solves $-\Delta u =f(u)+\lambda u$.
    \end{proof}

Note also that 
\[
J(u)\geq \frac12|\nabla u|_2^2 -\int_{\R^N} F_2(u).
\]
  Therefore,
\[E_\alpha\geq \widehat E_\alpha:=\inf\Big\{ \frac12\int_{\R^N}|\nabla u|^2-\int_{\R^N}F_2(u)\ \Big|\ u\in H^1(\R^N), \quad \int_{\R^N} u^2=\alpha  \Big\}.
\]
 
\begin{lemma} \label{Lemma:2.1}
The following statements hold.
  \begin{enumerate}
    \item $E_\alpha$ is nonnegative for small $\alpha$. 
    \item $\alpha\mapsto E_\alpha$ is midpoint concave in $(0,\alpha_N)$, i.e., for any  $\alpha\in(0,\alpha_N)$ and $\theta\in(0,1)$ with $\alpha+\theta\alpha\in (0,\alpha_N)$,
    \begin{equation}\label{midconcave}\frac12(E_{\alpha-\theta\alpha}+E_{\alpha+\theta\alpha})\leq E_{\alpha}.
    \end{equation}
    If $E_\alpha$ is attained for some $\alpha_0>0$, then the inequality \eqref{midconcave} is strict for $\alpha_0$ and every $\theta\in(0,1)$ with $\alpha_0+\theta\alpha_0\in (0,\alpha_N)$.
    \item $\alpha\mapsto E_\alpha$ is   continuous and concave   in $(0,\alpha_N)$, and $\lim_{\alpha\to 0} E_\alpha =0$.
     
  \end{enumerate}

\end{lemma}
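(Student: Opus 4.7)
For part (i), the task is to show that $J$ is nonnegative on $\mathcal{M}_\alpha$ when $\alpha$ is small. Combining the bound $F(u) \le F_2(u) \le C|u|^{2+4/N}$ from Lemma~\ref{F12}(ii) with the Gagliardo--Nirenberg inequality \eqref{GN} gives $\int F(u)\le CS(N)|\nabla u|_2^2\alpha^{2/N}$, so
\[
J(u)\ge |\nabla u|_2^2\bigl(\tfrac{1}{2}-CS(N)\alpha^{2/N}\bigr),
\]
which is nonnegative once $\alpha$ is sufficiently small, yielding $E_\alpha\ge 0$.

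For (ii), given $u\in\mathcal{M}_\alpha$ with $J(u)<+\infty$ and $\theta\in(0,1)$, the natural test functions in $\mathcal{M}_{(1\pm\theta)\alpha}$ are the scalings $v_\pm:=\sqrt{1\pm\theta}\,u$. A direct computation yields
\[
J(v_-)+J(v_+) = |\nabla u|_2^2 - \int\bigl[F(\sqrt{1-\theta}\,u)+F(\sqrt{1+\theta}\,u)\bigr],
\]
and (F4'') supplies the pointwise inequality $F(\sqrt{1-\theta}u)+F(\sqrt{1+\theta}u)\ge 2F(u)$, strict wherever $u\ne 0$. The main technical subtlety is integrating this inequality despite the singular behavior of $F$ near zero (encoded in $F_1$); Lemma~\ref{F12}(i) handles it, since the concavity of $t\mapsto F_1(\sqrt t)$ with $F_1(0)=0$ gives $F_1(\sqrt{1+\theta}\,u)\le(1+\theta)F_1(u)$ while monotonicity of $F_1$ on $[0,\infty)$ gives $F_1(\sqrt{1-\theta}\,u)\le F_1(u)$, keeping every integral finite whenever $J(u)<+\infty$. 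Therefore $J(v_-)+J(v_+)\le 2J(u)$, and taking the infimum over $u\in\mathcal{M}_\alpha$ produces \eqref{midconcave}. If $E_{\alpha_0}$ is attained by some $u_0\not\equiv 0$, the pointwise strict inequality integrates to a strict one on the positive-measure set $\{u_0\ne 0\}$, upgrading \eqref{midconcave} to a strict inequality for $\alpha_0$ and every admissible $\theta$.

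For (iii), concavity (and hence continuity on the open interval $(0,\alpha_N)$) will follow from midpoint concavity together with upper semicontinuity of $E_\alpha$. To obtain upper semicontinuity, I would write $E_\alpha=\inf_{u_0\in\mathcal{M}_1} J(\sqrt{\alpha}\,u_0)$ and observe that each $\alpha\mapsto J(\sqrt{\alpha}\,u_0)$ is continuous by dominated convergence; for $u_0$ with $J(u_0)<+\infty$ the dominant of $F_1(\sqrt{\alpha}\,u_0)$ on a bounded $\alpha$-range comes from the concavity bound $F_1(\sqrt{\alpha}\,u_0)\le\max(1,\alpha)F_1(u_0)$, while the dominant of $F_2(\sqrt{\alpha}\,u_0)$ comes from its polynomial growth, and for $u_0$ with $J(u_0)=+\infty$ the function $J(\sqrt{\alpha}\,u_0)$ is identically $+\infty$. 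For the limit, I would fix $\phi\in C_0^\infty(\R^N)$ with $|\phi|_2=1$: then $E_\alpha\le J(\sqrt{\alpha}\,\phi)=\tfrac{\alpha}{2}|\nabla\phi|_2^2-\int F(\sqrt{\alpha}\,\phi)\to 0$ as $\alpha\to 0^+$ (by dominated convergence on the compact support of $\phi$ together with $F(0)=0$), and together with the bound $E_\alpha\ge 0$ from (i) this yields $\lim_{\alpha\to 0^+} E_\alpha=0$.
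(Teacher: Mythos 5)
Your proof is correct and follows essentially the same outline as the paper for parts (i) and (ii): the Gagliardo--Nirenberg bound for (i), and the scaling $v_\pm=\sqrt{1\pm\theta}\,u$ combined with (F4$''$) for (ii), with the strictness in (ii) coming from testing at a minimizer. (You are also right that the exponent in (i) should be $\alpha^{2/N}$ rather than the $\alpha^{4/N}$ printed in the paper; this is a typo there.) The genuine difference is in (iii). The paper shows that $E_\alpha$ is bounded above (via a test function) and below (via $\widehat E_\alpha$, whose continuity it borrows from Shibata's Lemma 2.3), and then invokes a cited result \cite{Dono} that a midpoint concave function bounded on a subinterval is concave and continuous. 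You instead obtain upper semicontinuity directly by writing $E_\alpha=\inf_{u_0\in\mathcal M_1}J(\sqrt\alpha\,u_0)$ as an infimum of $\alpha$-continuous functions (the dominated-convergence check you give — the concavity bound for $F_1$ and the polynomial bound for $F_2$ — is exactly what makes this work), and then use the fact that a measurable midpoint concave function on an interval is concave. Your route is a bit more self-contained, avoiding the external continuity lemma for $\widehat E_\alpha$; the paper's route, given the cited tools, is shorter. Both correctly go through the standard theory of midpoint convexity. One small point worth making explicit in either version: finiteness of $E_\alpha$ on $(0,\alpha_N)$ (needed before one can speak of concavity at all) comes from the same Gagliardo--Nirenberg estimate as in (i), choosing $\tau$ in \eqref{6} small enough so that $\tfrac12-(c_0+\tau)S(N)\alpha^{2/N}>0$; you implicitly rely on this, and it is fine.
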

\begin{proof}
(i)
By \eqref{5} and the Gagliardo--Nirenberg inequality, we have 
\[J(u)\geq \frac12 |\nabla u|_2^2-C \int_{\R^N}|u|^{2+\frac4N}\geq (\frac12 -C(N)\alpha^{\frac4N})|\nabla u|_2^2,\quad\mbox{where}\ |u|_2^2=\alpha.
\]
Then we can conclude that $E_\alpha\geq 0$ if $\alpha$ is sufficiently small.

(ii)
Let $\alpha >0$ and $\theta\in(0,1)$.
Assume $\{u_n\} \subset H^1(\R^N)$ is such that 
$$J(u_n)\leq E_{\alpha}+\frac1n,\quad  |u_n|_2^2=\alpha.
$$
Then by (F4''),
\[
\begin{aligned}
E_{\alpha-\theta\alpha}+E_{\alpha+\theta\alpha}\leq& J\left(\sqrt{1-\theta} u_n\right)+J\left(\sqrt{1+\theta}u_n\right)\\
  =& |\nabla u_n|_2^2-\int_{\R^N}\left(F\left(\sqrt{1-\theta} u_n\right)+F\left(\sqrt{1+\theta}u_n\right)\right)\\
  <&2J(u_n)\leq 2E_{\alpha}+\frac2n.
\end{aligned}  
  \]

Hence, letting $n\to\infty$, we have the midpoint concavity.
Moreover, if $E_\alpha$ is attained, then we   just choose $u\in H^1(\R^N)$ such that $J(u)= E_{\alpha}$ and  $|u|_2^2=\alpha$.
    Hence,  the inequality holds strictly.

(iii)    To see  $E_\alpha$ is continuous and concave,  it suffices to show that $E_\alpha$ is bounded on some interval (see \cite{Dono}).
 Since   
 $f_2(s)$ either identically zero   or   satisfies the assumptions in \cite[Lemma 2.3]{Shibata}, we conclude that the function $\alpha\mapsto\widehat E_\alpha$ is continuous in $(0,+\infty)$.
On the other hand,
let 
$u_\alpha=\sqrt{\alpha}u_1$, where 
$u_1\in C_0^\infty(\R^N)$ is chosen such that $|u_1|_2^2=1$.
We have 
\[ E_\alpha\leq 
J(u_\alpha)=\alpha \int_{\R^N}|\nabla u_1|^2- \int_{\R^N} F(\sqrt{\alpha}u_1)\leq \alpha \int_{\R^N}|\nabla u_1|^2+ \int_{\R^N} F_1(\sqrt{\alpha}u_1).  
\] 
Hence, $E_\alpha$ is bounded in any finite subinterval of $(0,+\infty)$.
Then $E_\alpha$ must be continuous and   concave in $(0,+\infty)$.
Note   that  for $\alpha\in(0,1)$, $0\leq F_1(\sqrt{\alpha}u_1)\leq F_1(u_1)$. By Lebesgue convergence theorem 
$\lim_{\alpha\to 0^+}\int_{\R^N} F_1 (\sqrt{\alpha} u_1)=0$.
Therefore,
$\lim_{\alpha\to0^+} E_\alpha=0$.
 \end{proof}
By Lemma \ref{Lemma:2.1} (iii), we have:
\begin{lemma}
  \label{Lemma:2.2}
  Let $\alpha,\beta>0$ and $t>1$. Then  $E_{t\alpha}\leq t E_\alpha$ for $t\alpha\in (0,\alpha_N)$,   and 
  $E_{\alpha+\beta}\leq E_\alpha +E_\beta$ for $\alpha+\beta\in (0,\alpha_N)$. Both inequalities hold strictly if $E_\alpha$ is attained.
\end{lemma}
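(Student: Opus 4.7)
The plan is to derive both inequalities as elementary consequences of Lemma~\ref{Lemma:2.1}~(iii), which gives concavity of $\alpha\mapsto E_\alpha$ on $(0,\alpha_N)$, together with the boundary value $\lim_{\alpha\to 0^+}E_\alpha=0$. These two facts let us treat $E$ as a concave function on $[0,\alpha_N)$ with $E(0)=0$, and both claimed bounds are then instances of the general fact that a concave function vanishing at the origin is sublinear and subadditive.

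For the sublinearity $E_{t\alpha}\leq tE_\alpha$ with $t>1$, I would write $\alpha=\tfrac{1}{t}(t\alpha)+(1-\tfrac{1}{t})\cdot 0$ and invoke concavity to obtain
\[
E_\alpha\;\geq\;\tfrac{1}{t}E_{t\alpha}+\bigl(1-\tfrac{1}{t}\bigr)E_0\;=\;\tfrac{1}{t}E_{t\alpha}.
\]
For the subadditivity, I would use the two decompositions
\[
\alpha=\tfrac{\alpha}{\alpha+\beta}(\alpha+\beta)+\tfrac{\beta}{\alpha+\beta}\cdot 0,\qquad \beta=\tfrac{\beta}{\alpha+\beta}(\alpha+\beta)+\tfrac{\alpha}{\alpha+\beta}\cdot 0,
\]
apply concavity together with $E_0=0$ to each, and sum to get $E_\alpha+E_\beta\geq E_{\alpha+\beta}$.

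For the strict versions under the hypothesis that $E_\alpha$ is attained, I would argue by contradiction, writing $\alpha_0:=\alpha$. If equality held in either $E_{t\alpha_0}=tE_{\alpha_0}$ or $E_{\alpha_0+\beta}=E_{\alpha_0}+E_\beta$, then tracing back the arguments above shows that the point $(\alpha_0,E_{\alpha_0})$ would lie exactly on the chord of the graph of $E$ joining $(0,0)$ to $(t\alpha_0,E_{t\alpha_0})$ (respectively to $(\alpha_0+\beta,E_{\alpha_0+\beta})$); in the subadditivity case, this uses that both individual concavity inequalities summed above must then be equalities. Setting $h(s):=E_s-(\text{chord value at }s)$ on the relevant interval, $h$ is concave, nonnegative (that is precisely the concavity inequality already invoked), and vanishes at both endpoints and at the interior point $s=\alpha_0$. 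A nonnegative concave function vanishing at two endpoints and at an interior point of an interval must vanish identically on that interval, so $E$ is affine on an open neighborhood of $\alpha_0$. This contradicts the strict midpoint concavity of $E$ at $\alpha_0$ furnished by Lemma~\ref{Lemma:2.1}~(ii).

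I do not anticipate a real obstacle here: the entire statement is an elementary consequence of concavity plus the vanishing limit at~$0$. The only subtle point worth emphasizing is the standard rigidity fact that a concave function touching one of its chords at an interior point must coincide with that chord throughout the chord interval, which is what upgrades midpoint strict concavity at the single point $\alpha_0$ into the desired strict sublinearity and strict subadditivity.
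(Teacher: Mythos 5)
Your proof is correct and, for the non-strict part, essentially coincides with the paper's: both rest on concavity from Lemma~\ref{Lemma:2.1}(iii) together with $E_\alpha\to 0$ as $\alpha\to0^+$. The paper applies the three-point concavity inequality with a third mass $\beta$ and lets $\beta\to0$, while you equivalently extend $E$ by $E_0=0$ and use $0$ as a convex-combination endpoint; these are the same computation. For the strict inequalities you take a genuinely different and cleaner route. The paper first obtains $E_{2\alpha}<2E_\alpha$ directly from the strict midpoint concavity of Lemma~\ref{Lemma:2.1}(ii) via $E_{2\alpha}\le E_{2\alpha-\delta}+E_\delta<2E_\alpha$, then propagates $E_{t\alpha}<tE_\alpha$ to all $t>1$ by an explicit case analysis (first $t\in(1,2)$, then integer $t$, then $t\in(k+1,k+2)$) combined with the already-established subadditivity, and finally reads off strict subadditivity from the strict version of $\alpha E_{\alpha+\beta}\le(\alpha+\beta)E_\alpha$. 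Your argument instead goes by contradiction and invokes chord rigidity for concave functions: equality would force $E$ to touch a chord through $(0,0)$ at the interior point $\alpha$, hence coincide with that chord on the whole chord interval, hence be affine near $\alpha$, contradicting strict midpoint concavity there. Your route avoids the paper's piecewise induction; the paper's route avoids appealing to the rigidity principle, though that principle is elementary. One minor remark: in your rigidity step the vanishing of $h$ at the two chord endpoints is superfluous --- for a nonnegative concave $h$, a single interior zero already forces $h\equiv0$, since the zero can be written as a nontrivial convex combination of any nearby point with the far endpoint.
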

\begin{proof} By concavity, 
  $E_{\alpha+(1-t^{-1})\beta}\geq t^{-1}E_{t\alpha} +(1-t^{-1})E_\beta$ 
  for $t\geq 1$, $\alpha,\beta>0$.
  Letting $\beta\to 0$, we have $E_{t\alpha}\leq t E_\alpha$.
  Then setting $t=1+\frac{\beta}\alpha$, we have 
  \begin{equation}\label{Eab}
    \alpha E_{\alpha+\beta}\leq (\alpha+\beta) E_\alpha.
  \end{equation}
  Interchanging $\alpha$ and $\beta$, we have $\beta E_{\alpha+\beta}\leq (\alpha+\beta) E_\beta$. Hence,  $E_{\alpha+\beta}\leq E_\alpha +E_\beta$.

 Now we assume further that $E_\alpha$ is attained.
  We can take  $\delta\in (0,\alpha)$ so that by Lemma \ref{Lemma:2.1} (ii)
   $$E_{2\alpha}\leq E_{2\alpha-\delta}+E_\delta<2E_\alpha.$$ 
   When $t= 3, 4, \cdots$, 
   we have 
   $$E_{t\alpha}\leq E_{(t-2)\alpha}+E_{2\alpha}<tE_\alpha.$$ 
  When $t\in(1,2)$, we have  
  \[ 
     E_{(2-t)\alpha}+E_{t\alpha}<2 E_\alpha\quad \mbox{and}\quad
    (2-t) E_{\alpha}=(2-t) E_{(2-t)^{-1}(2-t)\alpha}\leq  E_{(2-t)\alpha}.
  \]
Hence, $E_{t\alpha}<tE_\alpha$ for $t\in(1,2)$.
On the other hand, when $t\in(k+1,k+2)$, $k=1,2,\cdots$, we have 
\[
  E_{t\alpha}\leq E_{(t-k)\alpha}+E_{k\alpha}<t E_\alpha.
\]
 Hence, $E_{t\alpha}<tE_\alpha$ for any $t>0$.
 As a result, \eqref{Eab} holds strictly. Then $E_{\alpha+\beta}<E_\alpha+E_\beta$.
\end{proof}

\begin{lemma}\label{Brezislieb}Assume $\int_{\R^N} F_1(u_n) +F_2(u_2)$ is bounded. The following statements hold 
  \begin{enumerate} 
    \item 
    If $|u_n|_{2+\frac4N}\to 0$, then
    $|u_n|_2\to0$.
    \item 
    If $|u_n|_{2+\frac4N}$ is bounded and  $u_n\to u$ a.e., then $F_1(u)\in L^1(\R^N)$ and 
   \[
     \int_{\R^N}F_1(u_n)-\int_{\R^N}F_1(u_n-u)\to \int_{\R^N}F_1(u),\quad \int_{\R^N}F_2(u_n)-\int_{\R^N}F_2(u_n-u)\to \int_{\R^N}F_2(u).
   \] 
  \end{enumerate}
  
\end{lemma}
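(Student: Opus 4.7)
For part (i), the strategy is to split $|u_n|_2^2$ by threshold. Condition (F2) yields, for each $M>0$, some $\delta_M>0$ with $f(s)/s\leq -M$ on $(0,\delta_M)$, hence $F_1(s)\geq (M/2)s^2$ for $|s|\leq\delta_M$. Splitting
$$|u_n|_2^2=\int_{\{|u_n|\leq\delta_M\}}u_n^2+\int_{\{|u_n|>\delta_M\}}u_n^2,$$
the first piece is bounded by $(2/M)\int_{\R^N}F_1(u_n)\leq 2C/M$ uniformly in $n$, and the second by $\delta_M^{-4/N}|u_n|_{2+4/N}^{2+4/N}\to 0$. Letting first $n\to\infty$ and then $M\to\infty$ gives $|u_n|_2\to 0$.

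For part (ii), $F_1(u)\in L^1(\R^N)$ is immediate from Fatou, since $F_1\geq 0$ and $u_n\to u$ a.e. The central tool for the Brezis--Lieb-type identities is the dilation estimate
$$F_1(\lambda s)\leq \lambda^2 F_1(s),\quad \lambda\geq 1,\ s\geq 0,$$
which follows from Lemma~\ref{F12}(i): concavity of $F_1(\sqrt{\cdot})$ with $F_1(0)=0$ makes $F_1(\sqrt{t})/t$ nonincreasing. A case split on $|b|\leq \eta|a|$ versus $|b|>\eta|a|$, together with the fact that $F_1$ is even and nondecreasing in $|t|$, yields, for every $\eta\in(0,1)$,
$$F_1(a+b)\leq (1+\eta)^2 F_1(a)+(1+1/\eta)^2 F_1(b),\quad a,b\in\R.$$
Applied with $(a,b)=(u_n-u,u)$ and once with $(a,b)=(u_n,-u)$ and $\eta=1$, this produces both
$$|F_1(u_n)-F_1(u_n-u)-F_1(u)|\leq K_\eta F_1(u_n-u)+C_\eta F_1(u),\quad K_\eta=O(\eta),$$
and a uniform bound on $\int_{\R^N}F_1(u_n-u)$.

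A standard Brezis--Lieb truncation now finishes the $F_1$ case: set $h_n^\eta:=(|F_1(u_n)-F_1(u_n-u)-F_1(u)|-\eta F_1(u_n-u))^+$. Then $h_n^\eta\to 0$ a.e.\ by continuity, and for $\eta$ small enough that $K_\eta\leq\eta$, the previous display gives $h_n^\eta\leq C_\eta F_1(u)\in L^1$, so dominated convergence yields $\int h_n^\eta\to 0$ and hence $\limsup_n\int|F_1(u_n)-F_1(u_n-u)-F_1(u)|=O(\eta)$; arbitrariness of $\eta$ gives the $F_1$ identity. The $F_2$ identity is analogous and easier: Lemma~\ref{F12}(ii) gives $|f_2(s)|\leq C|s|^{1+4/N}$, the mean value theorem gives $|F_2(a+b)-F_2(a)-F_2(b)|\leq C(|a|^{1+4/N}+|b|^{1+4/N})|b|+C|b|^{2+4/N}$, and Young's inequality together with the uniform $L^{2+4/N}$ bound on $u_n$ allows the same truncation with $\eta|u_n-u|^{2+4/N}$ in place of $\eta F_1(u_n-u)$. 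The main technical hurdle is the $F_1$ piece: $F_1$ need not admit any polynomial upper bound, so the dilation inequality extracted from concavity of $F_1(\sqrt{\cdot})$ is the essential substitute for the usual power-growth estimates.
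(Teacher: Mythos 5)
Your proof is correct and takes essentially the same route as the paper: both establish the Brezis--Lieb-type hypothesis $|F_1(a+b)-F_1(a)|\leq \tau F_1(a)+C_\tau F_1(b)$ from the concavity of $t\mapsto F_1(\sqrt{t})$, and then invoke (or, as you do, reprove) the generalized Brezis--Lieb lemma; the $F_2$ part is the standard power-growth case in both. Your derivation via the dilation estimate $F_1(\lambda s)\leq\lambda^2 F_1(s)$ and a split on $|b|\leq\eta|a|$ versus $|b|>\eta|a|$ is a slightly cleaner route to the key inequality than the paper's sign-of-$st$ case analysis, but it rests on the same structural facts about $F_1$. One cosmetic slip: your $K_\eta=(1+\eta)^2-1>\eta$, so the clause ``for $\eta$ small enough that $K_\eta\leq\eta$'' is vacuous; just use $K_\eta$ itself as the truncation level in the definition of $h_n^\eta$ and the argument closes as intended.
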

\begin{proof}
  (i) By (F2),
  for any $\tau>0$, there is $\delta>0$ such that 
  $f_1(t)>\tau^{-1} t$ and $F_1(t)>\frac12\tau^{-1} t^2$ for $t\in (0,\delta)$.
Then 
\[ \int_{\R^N} u_n^2=\int_{|u_n|<\delta} u_n^2 +\int_{|u_n|\geq \delta}u_n^2\leq 2\tau \int_{\R^N}F_1(u_n)+\delta^{-\frac4N}\int_{\R^N}|u_n|^{2+\frac4N}.
  \]

  Hence, 
  \[\limsup_{n\to\infty}\int_{\R^N} u_n^2\leq 2\tau \limsup_{n\to\infty} \int_{\R^N}F_1(u_n).\]\
  This completes the proof.

  (ii) We only show the   result for $F_1$, because the result for 
  $F_2(\cdot)$ follows directly from the Brezis--Lieb lemma.
Since $F_1(\sqrt \cdot)$ is   concave in $(0,+\infty)$ and $F_1(0)=0$, similar to the proof of Lemma \ref{Lemma:2.2},  we have
  for $r>1$, $t>0$ and $s>0$,
  \[
    F_1(\sqrt{rt})\leq rF_1(\sqrt t) \quad \mbox{and}\quad F_1(\sqrt{t+s})\leq F_1(\sqrt{t})+F_1(\sqrt{s}).
  \]
  Now by the inequalities above and the nondecreasing property, for each $s, t>0$ and $\tau\in(0,1)$   we have
  \[
   F_1( {t+s})=F_1(\sqrt{(t+s)^2}) \leq F_1(\sqrt{(1+\tau) t^2+ (1+\tau^{-1}) s^2)})\leq (1+\tau) F_1(t)+   (1+\tau^{-1}) F_1(  s).
  \]
  Hence, when $st\geq 0$
\begin{equation}\label{st+}
 0\leq F_1(t+s)-F_1(t)  \leq \tau F_1(t)+ (1+\tau^{-1})F_1(s). 
\end{equation}
If $st<0$, then 
\[
  F_1( {t+s})=F_1(\sqrt{(|t|-|s|)^2}) \leq F_1( \sqrt{t^2+  s^2})\leq   F_1(t)+     F_1(  s).
 \]
When $st<0$ with $|s|\geq |t|$, we have 
\begin{equation}\label{st}
  -F_1(s)\leq -F_1(t)\leq F_1( {t+s})-F_1(t)\leq F_1(s).
\end{equation}
On the other hand,
when $st<0$ with $|t|>|s|$, we have 
\[
  F_1(t)=F_1(|t|-|s|+|s|)\leq (1+\tau)F_1(|t|-|s|)+ (1+\tau^{-1}) F_1(s)\leq F_1(|t|-|s|)+\tau F_1(t)+(1+\tau^{-1}) F_1(s).
\]
Then for $st<0$ with $|t|>|s|$,
\begin{equation}\label{ts}
  -\tau F_1(t)-(1+\tau^{-1}) F_1(s) \leq F_1(|t|-|s|)-F_1(t) = F_1(t+s)-F_1(t)\leq F_1(s).
\end{equation}
By \eqref{st+}, \eqref{st} and \eqref{ts}, we have for each $s,t\in\R$,
\[
  |F_1(t+s)-F_1(t)  |\leq \tau F_1(t) +(1+\tau^{-1})F_1(s).
\]
Then $F_1$ satisfies the assumption of the general Brezis--Lieb lemma (\cite[Theorem 2]{brezislieb}).
\end{proof}

Now we are ready to show Theorem \ref{them1.1}.
\begin{proof}[Proof of Theorem \ref{them1.1}]
  Let $u_n\in H^1(\R^N)$ be such that
  $|u_n|_2^2=\alpha$ and $J(u_n)\to E_\alpha$.
  Then by Gagliardo--Nirenberg inequality \eqref{GN} and (F3), $\{u_n\}$ are bounded in $H^1(\mathbb R^N)$.
We claim that 
\[
\limsup_{n\to\infty}\sup_{B_1(y)}  |u_n|^2>0.
\]
Otherwise, by Lion's lemma,
$|u_n|_{2+\frac4N}\to 0$. However, $\int_{\R^N}F_1(u_n)\leq J(u_n)+\int_{\R^N}F_2(u_n)$ is bounded. Then we have $|u_n|_2\to 0$, a contradiction.
Now assume, there is $y_n\in\R^N$ such that, up to a subsequence, 
$u_n(\cdot-y_n)\rightharpoonup u$ for some $u\in H^1(\R^N)\setminus\{0\}$.
Setting $v_n=u_n(\cdot-y_n)-u$, $\beta=|u|_2^2\leq \alpha$, we have 
\[
|v_n|_2^2 \to \alpha-\beta,\quad   I(v_n)\to E_\alpha-I(u)\leq E_\alpha-E_\beta.
\]
When  $\beta<\alpha$ and $I(u)=E_\beta$, 
then $E_\beta$ is attained and 
\[
  E_{\alpha-\beta}=\lim_{n\to\infty} E_{|v_n|_2^2}\leq \lim_{n\to\infty} I(v_n)= E_\alpha-E_\beta<E_{\alpha-\beta}. 
\]
When 
$\beta<\alpha$ and $I(u)>E_\beta$,
then 
\[
  E_{\alpha-\beta}=\lim_{n\to\infty} E_{|v_n|_2^2}\leq \lim_{n\to\infty} I(v_n)=E_\alpha-I(u)< E_\alpha-E_\beta\leq E_{\alpha-\beta}. 
\]
In either case, we have a contradiction.
Hence, $\beta=\alpha$, and we have 
$u_n(\cdot-y_n)\to u$ in $L^2(\R^N)$. So
$u_n(\cdot-y_n)\to u$ in $L^{2+4/N}(\R^N)$ and $\int_{\R^N} F_2(u_n)\to \int_{\R^N}F_2(u)$.
Then 
\[
E_\alpha\leq J(u)\leq \liminf_{n\to\infty}\frac12\int_{\R^N}(|\nabla u_n|^2+F_1(u_n))-\int_{\R^N}F_2(u)=\lim_{n\to\infty}J(u_n)  =E_\alpha.
\]
Therefore, $E_\alpha$ is attained by $u$ and $|u|$.
Since $E_\alpha$ is attained for each $\alpha$, it is strictly midpoint concave, and thus strictly convex. (i) holds. (ii) follows from Lemma \ref{Lemma:2.1} (iii) and \eqref{GN}.

To see (iii), we assume further  $c_0=0$.  If $f$ admits a zero, then $\lim_{s\to+\infty}f(s)=\lim_{s\to+\infty}F(s)=+\infty$. We can find $u_0\in C_0^\infty(\R^N)$ such that 
$\int_{\R^N}F(u_0)>0$.
Hence, when $c_0=0$, 
\[
  J(u_0(t^{-\frac1N}\cdot))= t^{1-\frac2N}\int_{\R^N}|\nabla u_0|^2-t\int_{\R^N}F(u_0)\to -\infty \mbox{ as } t\to+\infty.
  \]
By $|u_0(t^{-\frac1N}\cdot)|_2^2=t|u_0|_2^2$, we have $E_\alpha\to-\infty$ as $\alpha\to +\infty$. By this, the strong concavity and (ii), $E_\alpha$ has a unique zero in $(0,+\infty)$.

If $f$ is negative in $(0,+\infty)$, then $F(s)=-F_1(s)$ for each $s$.
Hence, $E_\alpha\geq 0$ for each $\alpha$. By the strong concavity and (ii), $E_\alpha$ is strictly increasing.
\end{proof}

\section{Preliminaries for the proof of Theorem \ref{th1.1}}\label{sec2}
By a change of scaling, \eqref{1.1} becomes
\begin{equation}\label{1.1'}
  \begin{cases}
  - \Delta u+V(\e x)u=f(u)+\lambda u,\\
  \int_{\R^N}u^2 =\alpha. 
  \end{cases}
  \end{equation}
  We will solve \eqref{1.1'}
 under the assumptions (F1)--(F5) and (V1)--(V3).
We   assume  
that    $   \Omega \subset B(0,M_0/2)$  for some $M_0>0$, and  without loss of generality, 
\[  \inf_{B(0,M_0) } V=1.\]

For any $O$ satisfies (V3) such that $\mathcal M\subset  O\subset \overline O\subset \Omega$, we can fix $\delta_0 \in(0,1)$ small such that 
$O^{5\delta_0}\subset   \Omega$ and
\[
 \inf_{  O^{3\delta_0 }\setminus O^{\delta_0}}|\nabla V|\geq \nu_0,
 \]
for some $\nu_0>0$, where 
\[
 O^\delta:=\set{x\in \R^N | \text{dist}(x, O)\leq \delta}\quad \mbox{for}\quad  \delta>0.
\]
We now fix 
\begin{equation}\label{mu0}
  \mu_0=\min_{x\in O^{3\delta_0}}V(x).
\end{equation} 

Let $\widetilde V:\mathbb R^N\rightarrow[1,+\infty)$ be a  function such that
\begin{equation}\label{tildev}
\widetilde V(x)=
\begin{cases}
V(x),\ &|x|< M_0;\\
\max\{V(x),|x|^2\}, \ &|x|\geq M_0.
\end{cases}
\end{equation}
We will work on the Hilbert space
\begin{equation}\label{H}
H_\varepsilon:=\Set{u\in H^1(\mathbb R^N) | \int_{\mathbb R^N}\widetilde V(\varepsilon x)u^2\rd x<\infty},
\end{equation}
with inner product 
$$(u,v)_\varepsilon:=\int_{\mathbb R^N}\nabla u \nabla v+ \widetilde V(\varepsilon x) uv,
$$ 
and norm $\|u\|_\varepsilon:=\sqrt{(u,u)_\varepsilon}$. We also denote the norm on dual space by $\|\cdot\|_{H_\e^{-1}}$.
Furthermore, we only prove the existence of $\ell$-peak solutions for $\ell\geq 2$ since the case $\ell=1$ is much simpler.

\subsection{The limit system}
We first study the solution $(\lambda, \boldsymbol{v}):=(\lambda,v_1,v_2,\cdots,v_\ell)\in \mathbb R\times H^1(\mathbb R^N)^\ell$
to  the system
\begin{equation}\label{1.7'}
  \begin{cases}
  -\Delta v_i=f(v_i)+\lambda v_i \  \ \text {in}\ \ \mathbb R^N,\\
  v_i(x)>0,\  \lim_{|x|\to\infty}v_i(x)=0,\quad i=1,2, \cdots, \ell,\\
  \sum_{i=1}^{\ell}|v_i|_2^2=\alpha.
  \end{cases}
  \end{equation}
  Let $\ell^{-1}\alpha\in(0, \alpha_N)$.
 Then  there is a  minimizer $u_0$ for $E_{\ell^{-1}\alpha}$. We may assume $u_0>0$ and $u_0(x)=u_0(|x|)$.
 It is clearly that for some $\lambda\in\R$, $(\lambda,u_0, u_0, \cdots, u_0)\in\R\times H^1(\R^N)^\ell$ is a solution to \eqref{1.7'}.  
 Problem \eqref{1.7'} is related to the following functional 
\[
  \mathbb{J}(\boldsymbol{v}):=\sum_{i=1}^\ell J(v_i)
  \]
  defined on 
  \[
 \mathbb{M}_\alpha:=\set{\boldsymbol{v}=(v_1,\cdots,v_\ell)\in H^1(\R^N)^\ell | \sum_{i=1}^\ell |v_i|_2^2=\alpha}. 
  \]
 We call   $\boldsymbol{v}$ a  critical point to $\mathbb{J}$ on $\mathbb{M}_\alpha$ if $(\lambda, \boldsymbol{v})$ solves \eqref{1.7'} for some $\lambda$.
  Set
\[S_{\ell-1}:=\left\{\boldsymbol s=(s_1,\cdots,s_\ell)\in[0,1]^\ell\, \bigg| \,\sum_{i=1}^\ell s_i=1\right\}.\]
 For each $\boldsymbol{s}\in S_{\ell-1}$, define
 \[\gamma_0(\boldsymbol{s})=  (\sqrt{ {\ell s_1} } u_0, \cdots,    \sqrt{\ell s_\ell}  u_0) \in \mathbb{M}_\alpha.\]
 \begin{proposition}
  For each closed neighborhood $S\subset S_{\ell-1}$ of $\boldsymbol{s}^0:=(\ell^{-1},\cdots,\ell^{-1})$, we   define
  \begin{equation*}\label{eq211}
   \Gamma =\Set{\gamma\in C( S,  \mathbb{M}_\alpha ) |   \gamma=\gamma_0 \ \text{on}\ \partial S }.
 \end{equation*}
There holds 
\begin{equation}\label{eq 4444}
    \mathbb{J}(\gamma_0(\boldsymbol{s})) 
     < \ell E_{\ell^{-1}\alpha} \quad \mbox{if}\quad \boldsymbol{s}\neq \boldsymbol{s}^0.
  \end{equation}
  Moreover,
  \begin{equation}\label{eqmp}
  \ell E_{\ell^{-1}\alpha} = \inf_{\gamma\in\Gamma }\sup_{\boldsymbol s\in  S}  \mathbb{J}(\gamma(\boldsymbol{s})).
  \end{equation}
 \end{proposition}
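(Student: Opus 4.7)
The plan is to prove \eqref{eq 4444} by a pointwise Jensen inequality driven by (F4'), and to derive \eqref{eqmp} from this together with a Brouwer degree argument. For \eqref{eq 4444}, I would first observe that $\sum_{i=1}^\ell \ell s_i = \ell$, so
\[
  \mathbb{J}(\gamma_0(\boldsymbol{s})) = \tfrac{\ell}{2}\int_{\R^N}|\nabla u_0|^2 - \sum_{i=1}^\ell \int_{\R^N} F(\sqrt{\ell s_i}\,u_0),
\]
while $\ell E_{\ell^{-1}\alpha} = \ell J(u_0) = \tfrac{\ell}{2}\int_{\R^N}|\nabla u_0|^2 - \ell \int_{\R^N} F(u_0)$. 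Hence the claim reduces to showing strictly
$\ell \int F(u_0) < \sum_i \int F(\sqrt{\ell s_i}\,u_0)$. Condition (F4') tells us $\phi(t):=F(\sqrt{t})$ is strictly convex on $(0,\infty)$, so Jensen's inequality with uniform weights $1/\ell$ gives, at every $x$ with $u_0(x)>0$,
\[
  F(u_0(x)) = \phi\Big(\tfrac{1}{\ell}\sum_{i=1}^\ell \ell s_i\, u_0(x)^2\Big) \leq \tfrac{1}{\ell}\sum_{i=1}^\ell F(\sqrt{\ell s_i}\,u_0(x)),
\]
with strict inequality whenever $\boldsymbol{s}\neq\boldsymbol{s}^0$ (the points $\ell s_i u_0(x)^2$ are then not all equal). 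Since $u_0$ is nontrivial, $\{u_0>0\}$ has positive measure, so integrating gives the desired strict inequality. Uniform integrability is guaranteed by $F_2(s)\leq C|s|^{2+4/N}$ from Lemma \ref{F12}(ii) together with the scaling estimate $F_1(\sqrt{r}u)\leq rF_1(u)$ for $r\geq 1$ from the concavity of $F_1(\sqrt{\cdot})$ in Lemma \ref{F12}(i).

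The inequality ``$\leq$'' in \eqref{eqmp} is then immediate: $\gamma_0\in\Gamma$ and $\boldsymbol{s}^0\in S$, so by \eqref{eq 4444} together with the equality $\mathbb{J}(\gamma_0(\boldsymbol{s}^0)) = \ell J(u_0) = \ell E_{\ell^{-1}\alpha}$, we obtain $\sup_{\boldsymbol{s}\in S} \mathbb{J}(\gamma_0(\boldsymbol{s})) = \ell E_{\ell^{-1}\alpha}$.

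For the reverse direction, fix $\gamma=(v_1,\ldots,v_\ell)\in\Gamma$ and define $\Phi: S \to S_{\ell-1}$ by $\Phi(\boldsymbol{s}) = \alpha^{-1}(|v_1(\boldsymbol{s})|_2^2,\ldots,|v_\ell(\boldsymbol{s})|_2^2)$; this is continuous and well-defined since $\sum_i |v_i|_2^2 = \alpha$. On $\partial S$ we have $\gamma=\gamma_0$, so $|v_i|_2^2 = \ell s_i\cdot\ell^{-1}\alpha = s_i\alpha$, and $\Phi|_{\partial S} = \mathrm{id}$. Choosing an $(\ell-1)$-dimensional chart on $S_{\ell-1}$ and noting $\boldsymbol{s}^0\in\mathrm{int}(S)$, the linear homotopy $H_t = t\Phi + (1-t)\mathrm{id}$ lies in the convex set $S_{\ell-1}$ and satisfies $H_t|_{\partial S}=\mathrm{id}$, so $\boldsymbol{s}^0\notin H_t(\partial S)$ for every $t\in[0,1]$. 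By homotopy invariance of the Brouwer degree,
\[
  \deg(\Phi,\ \mathrm{int}(S),\ \boldsymbol{s}^0) = \deg(\mathrm{id},\ \mathrm{int}(S),\ \boldsymbol{s}^0) = 1,
\]
hence some $\boldsymbol{s}^*\in \mathrm{int}(S)$ satisfies $|v_i(\boldsymbol{s}^*)|_2^2 = \ell^{-1}\alpha$ for every $i$. Since $v_i(\boldsymbol{s}^*) \in \mathcal{M}_{\ell^{-1}\alpha}$, we have $J(v_i(\boldsymbol{s}^*)) \geq E_{\ell^{-1}\alpha}$, and therefore $\sup_{\boldsymbol{s}\in S}\mathbb{J}(\gamma(\boldsymbol{s})) \geq \mathbb{J}(\gamma(\boldsymbol{s}^*)) \geq \ell E_{\ell^{-1}\alpha}$. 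The main subtlety is precisely this degree calculation on the simplex: one must pass to local coordinates and verify that the linear homotopy remains in $S_{\ell-1}$ and avoids $\boldsymbol{s}^0$ on the boundary, but once this is set up the conclusion follows from the standard fact that the Brouwer degree depends only on boundary values.
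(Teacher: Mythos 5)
Your proof is correct and follows essentially the same route as the paper: the strict inequality \eqref{eq 4444} comes from Jensen's inequality for the strictly convex function $t\mapsto F(\sqrt{t})$ (condition (F4')), and the identity \eqref{eqmp} comes from the upper bound via $\gamma_0$ together with a Brouwer degree argument showing every $\gamma\in\Gamma$ passes through an $\boldsymbol{s}^*$ with all $L^2$-masses equal to $\ell^{-1}\alpha$. The paper states the degree step in one line, whereas you carry it out explicitly via the map $\Phi$ and the boundary-dependence of degree; this is merely a fuller account of the same argument.
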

 \begin{proof}
  By the strict convexity of $F(\sqrt{\cdot})$,  
  for each $\boldsymbol{s}\neq \boldsymbol{s}^0$, we have
 \[ 
 \begin{aligned}
   \mathbb{J}(\gamma_0(\boldsymbol{s}))
   =  &\frac\ell 2\int_{\R^N}|\nabla u_0|^2-\sum_{j=1}^\ell \int_{\R^N} F(\sqrt{ {\ell s_j} } u_0)\\
   = & \frac\ell 2\int_{\R^N}|\nabla u_0|^2-\ell\int_{\R^N} \ell^{-1}\sum_{j=1}^\ell  F(\sqrt{ {\ell s_j}   u_0^2})\\
    < &\frac\ell 2\int_{\R^N}|\nabla u_0|^2-\ell\int_{\R^N}   F\left(\sqrt{ \ell^{-1}\sum_{j=1}^\ell {\ell s_j}   u_0^2}\right)=\ell E_{\ell^{-1}\alpha}.
 \end{aligned}  
 \]
 On the other hand, according to the Brouwer degree theory, for each $\gamma\in \Gamma_\alpha$, there exists $\boldsymbol s\in S $ such that 
 \[
   |(\gamma(\boldsymbol{s}))_i|_2^2=\ell^{-1}\alpha,\quad i=1,\cdots,\ell.
 \]
This implies \eqref{eqmp}.
\end{proof}

By Lemma \ref{Lemma:2.2}, $E_{t\beta}< t E_\beta$ for $t>1$, $t\beta\in(0,\alpha_N)$. So we have
\begin{equation}\label{eq25}
  \ell E_{\ell^{-1}\alpha}<(\ell+1) E_{(\ell+1)^{-1}\alpha}  \text{\ for each\ }  \ell\geq 1\ \mbox{such that}\ \ell^{-1}\alpha\in(0,\alpha_N). 
\end{equation} 
Specially, \eqref{eq25} is true for each $\ell\geq 1$ if $\alpha\in(0,\alpha_N)$.

Let  $\mu_0 \in [1,V_0)$ be the constant   fixed in \eqref{mu0}.
For $\boldsymbol{\mu}=(\mu_1,\cdots,\mu_\ell)\in [  \mu_0, V_0 ]^\ell$  and $\boldsymbol{v}\in\mathbb{M}_\alpha$, we consider the functional 
$\mathbb{J}_{\boldsymbol{\mu}}$ defined by 
\[
  \mathbb{J}_{\boldsymbol{\mu}}(\boldsymbol{v})=\mathbb{J}(\boldsymbol{v})+\sum_{i=1}^\ell\frac12 \mu_i|v_i|_2^2.
\]

Similarly, we say  $\boldsymbol{v}$ is a  critical point of $\mathbb{J}_{\boldsymbol{\mu}}$ on $\mathbb{M}_{\alpha}$ if there is $\lambda\in\R$   such that $(\lambda, \boldsymbol{v})$ solves the following problem:
  \begin{equation}\label{psa}
    \begin{cases}
    -\Delta v_i=f(v_i)-\mu_iv_i+\lambda  v_i \  \ \text {in}\ \ \mathbb R^N,\\
     \lim_{|x|\to\infty}v_i(x)=0, \quad i=1,2,\cdots, \ell,\\
     \sum_{i=1}^\ell |v_i|_2^2= \alpha.
    \end{cases}
    \end{equation}

\begin{lemma}\label{lem1bound} 
  Assume   $ \alpha\in(0, \alpha_N)$. For $\beta\in [\frac12\alpha,\alpha]$, $\boldsymbol{\mu}=(\mu_1,\cdots,\mu_\ell)  \in [ \mu_0, V_0]^\ell $, let  $  \boldsymbol{v} =( v_1,\cdots,v_\ell)$ be a critical point of $\mathbb{J}_{\boldsymbol{\mu}}$ on $\mathbb{M}_{\beta}$ with a corresponding Langrange multiplier $\lambda$. If    $\mathbb{J}(\boldsymbol{v})\leq C_0$  for some  constant $C_0$, then there is $D_1>0$ depending only on $\alpha, C_0, \ell,\mu_0$ such that 
    \[\sum_{i=1}^\ell\|v_i\|_{H^1}  +  |\lambda|\leq D_1  .\]
\end{lemma}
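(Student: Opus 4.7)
The plan is to derive the bound in two stages: first control the $H^1$ norms of the $v_i$ from the energy bound using the $L^2$-subcritical structure, then test the equation with $v_i$ to pin down $\lambda$.

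For the first stage, I would start from the energy bound $\mathbb{J}(\boldsymbol{v})\leq C_0$. Using the decomposition $F=F_2-F_1$ from the identity $F(t)+F_1(t)=F_2(t)$ (implied by \eqref{HG}), write
\[
\mathbb{J}(\boldsymbol{v})=\sum_i\tfrac12|\nabla v_i|_2^2+\sum_i\int_{\R^N}F_1(v_i)-\sum_i\int_{\R^N}F_2(v_i)\leq C_0.
\]
Since $F_1\geq 0$, this already yields $\sum_i|\nabla v_i|_2^2\leq 2C_0+2\sum_i\int F_2(v_i)$. To control $\int F_2(v_i)$, I would use \eqref{6} of Lemma~\ref{F12}: for any $\tau>0$,
\[
\int_{\R^N}F_2(v_i)\leq (c_0+\tau)\int_{\R^N}|v_i|^{2+4/N}+C_\tau|v_i|_2^2\leq(c_0+\tau)S(N)|\nabla v_i|_2^2|v_i|_2^{4/N}+C_\tau|v_i|_2^2,
\]
by the Gagliardo--Nirenberg inequality \eqref{GN}. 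Since each $|v_i|_2^2\leq\beta\leq\alpha<\alpha_N$, and $c_0 S(N)\alpha_N^{2/N}=1/2$ by the definition of $\alpha_N$, I can pick $\tau>0$ small so that $\eta:=\frac12-(c_0+\tau)S(N)\alpha^{2/N}>0$. Absorbing the gradient term then gives
\[
\eta\sum_i|\nabla v_i|_2^2\leq C_0+\ell C_\tau\alpha,
\]
which, together with $\sum_i|v_i|_2^2=\beta\leq\alpha$, bounds $\sum_i\|v_i\|_{H^1}$ by a constant depending only on $\alpha,C_0,\ell$. As a byproduct, also $\sum_i\int F_1(v_i)$ is bounded.

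For the second stage, I would test the PDE in \eqref{psa} with $v_i$ and sum:
\[
\lambda\beta=\sum_i|\nabla v_i|_2^2+\sum_i\mu_i|v_i|_2^2-\sum_i\int_{\R^N}f(v_i)v_i.
\]
The first two terms on the right are already controlled (noting $\mu_i\leq V_0$). For the nonlinear term, split $f(v_i)v_i=f_2(v_i)v_i-f_1(v_i)v_i$. The positive piece is controlled by \eqref{5} and Gagliardo--Nirenberg, $\int f_2(v_i)v_i\leq C\int|v_i|^{2+4/N}\leq CS(N)|\nabla v_i|_2^2|v_i|_2^{4/N}$, hence bounded. The negative piece is handled by the inequality $f_1(s)s\leq 2F_1(s)$ from Lemma~\ref{F12}(iii), giving $\int f_1(v_i)v_i\leq 2\int F_1(v_i)$, which is bounded by the first stage. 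Therefore $|\lambda\beta|$ is bounded by a constant depending only on $\alpha,C_0,\ell,\mu_0$, and since $\beta\geq\alpha/2>0$, this yields the desired bound on $|\lambda|$.

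The substantive point, and the only place where the structure of the problem matters, is the first stage: the condition $\alpha<\alpha_N$ is exactly what one needs to absorb the supercritical part of $F_2$ into the kinetic energy via Gagliardo--Nirenberg; without it the argument collapses. The nonlipschitzian singularity $f(s)/s\to-\infty$ at $0$ causes no trouble because $F_1\geq 0$, so the negative-nonlinearity contribution to the energy works \emph{in our favor} for the $H^1$ bound, and on the test-function side it is tamed by the sublinear bound $f_1(s)s\leq 2F_1(s)$ of Lemma~\ref{F12}(iii). That $\beta$ appears in the denominator in the $\lambda$-estimate is why the hypothesis $\beta\geq\alpha/2$ is imposed.
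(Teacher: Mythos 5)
Your proposal is correct and follows essentially the same route as the paper's proof: bound the kinetic energy from the energy constraint $\mathbb{J}(\boldsymbol{v})\leq C_0$ via \eqref{6} and Gagliardo--Nirenberg, exploiting $\alpha<\alpha_N$ to absorb the $|\nabla v_i|_2^2|v_i|_2^{4/N}$ term, then pair the equation with $v_i$ and use the two inequalities of Lemma~\ref{F12} to control $\int f(v_i)v_i$. The only cosmetic differences are that the paper bounds $\int F(v_i)$ directly by \eqref{6} rather than detouring through $F\leq F_2$, and in the $\lambda$-estimate it uses the two-sided bound $2F(v_i)\leq f(v_i)v_i\leq C|v_i|_{2+4/N}^{2+4/N}$ in one stroke instead of splitting into $f_1$ and $f_2$ parts; these reduce to the same ingredients.
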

\begin{proof}
   By \eqref{6}, we have 
    \[\begin{aligned}
     C_0  \geq  \sum_{i=1}^\ell J(v_i)
     = & \sum_{i=1}^\ell \left(\frac12  |\nabla v_i|_2^2  - \int_{\R^N} F(v_i)\right)   \\
     \geq &\sum_{i=1}^\ell \left(\frac12  |\nabla v_i|_2^2- \int_{\R^N}(c_0+\tau) |v_i|^{2+\frac4N} -C_\tau \int_{\R^N}|v_i|^2  \right)\\
     \geq &\frac12\sum_{i=1}^\ell\left(1-2 S(N)(c_0+\tau)(|v_i|_2^2)^{\frac2N}\right)|\nabla v_i|_2^2-C_\tau\alpha.
    \end{aligned}
    \]
Here we   fix $\tau>0$   sufficiently small such that 
      \[
        1-2 S(N)(c_0+\tau)(|v_i|_2^2)^{\frac2N}\geq 1-2 S(N)(c_0+\tau)\alpha^{\frac2N}>0.
      \]
  Then    we see that 
  $|\nabla v_i|_2^2$ is bounded by some constant depending only on $\ell$, $N$, $\alpha$ and $C_0$.

  On the other hand, since $(\lambda-\mu_i) |v_i|_2^2=|\nabla v_i|_2^2 -\int_{\R^N} f(v_i)v_i$, $f(v_i)v_i\geq 2F(v_i)$, and $f(v_i)v_i\leq C|v_i|_{2+\frac4N}^{2+\frac4N}$, we have 
  $ |\lambda-\mu_i||v_i|_2^2\leq C$ for some constant $C>0$. 
 Summing up, we have 
  \[
    |\lambda| \alpha  \leq \ell  C +\max_{1\leq i\leq \ell}| \mu_i| \alpha.
  \]
  Then the conclusion follows.
  \end{proof}
  \begin{lemma}\label{L2}
    If $u\geq 0$ satisfies $-\Delta u\leq f(u)+ t u$, then either $u=0$ or $|u|_2^2\geq 1/C_{t}$ for some $C_t>0$ depending only on $t$.
  \end{lemma}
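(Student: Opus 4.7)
The plan is to test the differential inequality against $u$ itself and combine the strongly negative behaviour of $f$ near zero (from (F2)) with the subcritical upper bound (from Lemma \ref{F12} (ii)) to trap $\|u\|_{H^1}$ by $|u|_{2+4/N}^{2+4/N}$; Gagliardo--Nirenberg then forces a lower bound on $|u|_2$.

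More precisely, assume $u \in H^1(\R^N)$ is nontrivial. Multiplying $-\Delta u \leq f(u) + t u$ by $u$ and integrating gives
\[
|\nabla u|_2^2 \;\leq\; \int_{\R^N} f(u)\,u \;+\; t\,|u|_2^2.
\]
By (F2), fix $M := t+1$ and choose $\delta = \delta(t) > 0$ with $f(s) \leq -M s$ for $0 < s < \delta$; by Lemma \ref{F12} (ii), $f(s) \leq C s^{1+4/N}$ for all $s > 0$. Splitting the right-hand side according to $\{u < \delta\}$ and $\{u \geq \delta\}$,
\[
\int_{\R^N} f(u)\,u \;\leq\; -M\!\!\int_{\{u<\delta\}}\!\! u^2 \;+\; C\!\!\int_{\{u\geq\delta\}}\!\! u^{2+4/N}
\;=\; -M|u|_2^2 + M\!\!\int_{\{u\geq\delta\}}\!\! u^2 \;+\; C\!\!\int_{\{u\geq\delta\}}\!\! u^{2+4/N}.
\]

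Substituting and using $M - t = 1$, together with $\int_{\{u\geq\delta\}} u^2 \leq \delta^{-4/N}\int_{\{u\geq\delta\}} u^{2+4/N}$, I would obtain
\[
|\nabla u|_2^2 + |u|_2^2 \;\leq\; C'\,|u|_{2+4/N}^{2+4/N},
\]
where $C' = C'(t)$. Applying the Gagliardo--Nirenberg inequality \eqref{GN} yields
\[
|\nabla u|_2^2 + |u|_2^2 \;\leq\; C'\,S(N)\,|\nabla u|_2^2\,|u|_2^{4/N},
\]
hence, dividing by $|\nabla u|_2^2 > 0$ (which holds since $u \not\equiv 0$ and $u \in H^1$), $1 \leq C' S(N)\,|u|_2^{4/N}$, i.e.\ $|u|_2^2 \geq 1/C_t$ with $C_t := (C' S(N))^{N/2}$.

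The only delicate point is the legitimacy of testing with $u$: one needs to ensure $f(u)u \in L^1$ so that the integration by parts is justified (for instance by first testing against $u\,\eta_R$ where $\eta_R$ is a cutoff, controlling the negative part of $f(u)u$ via (F2) and the positive part via the subcritical bound, and then letting $R \to \infty$). Once this is in place, the estimates above are routine.
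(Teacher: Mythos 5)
Your proposal is correct and follows essentially the same route as the paper: test the inequality against $u$, use (F2) to absorb the $t\,u^2$ contribution on the small-$u$ region, use the subcritical growth bound to dominate the large-$u$ region by $|u|_{2+4/N}^{2+4/N}$, then conclude via Gagliardo--Nirenberg after dividing by $|\nabla u|_2^2 > 0$. The paper compresses the splitting into the single observation that $\left(\frac{f(s)}{s}+t\right)^+ \leq C_t\, s^{4/N}$ and writes $\int f(u)u + t u^2 \leq \int \left(\frac{f(u)}{u}+t\right)^+ u^2 \leq C_t |u|_{2+4/N}^{2+4/N}$, which is exactly your estimate phrased without the explicit decomposition into $\{u<\delta\}$ and $\{u\geq\delta\}$.
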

\begin{proof}
 The conclusion follows from   the Gagliardo--Nirenberg inequality:
\begin{equation*}
 |\nabla u|_2^2\leq \int_{\R^N}  f(u)u +\lambda  u^2\leq \int_{\R^N}  (\frac{f(u)}u +t)^+  u^2\leq  C_t  |u|_{2+\frac4N}^{2+\frac4N}\leq C_t C(N) |\nabla u|_2^2 |  u|_2^\frac4N,
\end{equation*}
where $C_t>0$ is a constant depending only on $t$.
\end{proof}
For $ \alpha\in(0, \alpha_N)$, set
 \[
  K_{\alpha}=\Set{ \boldsymbol{v}\in \mathbb{M}_\alpha |
\begin{gathered}
  \boldsymbol{v} \ \mbox{is a critical point of $\mathbb{J}_{\boldsymbol{\mu}}$ on $\mathbb{M}_{ \alpha}$ for some $\boldsymbol{\mu}\in [\mu_0,V_0]^\ell,$} \\
   \mbox{
    $v_i > 0$ and $v_i(0)=\max_{\R^N}v_i$ for $i=1,2,\cdots,\ell$, and }   
      \mathbb{J}_{\boldsymbol{\mu}}(\boldsymbol{v})\leq \ell E_{\ell^{-1}\alpha}+\frac12V_0\alpha\end{gathered} }.
\]
      Clearly, $K_{\alpha}\neq \emptyset$. Moreover, set  
        \begin{equation}\label{rho1}
          \rho_1=\frac12\min\{ { C_{t}^{-1}}, \ell^{-1}\alpha\}
        \end{equation} 
         where $C_t$ is the constant in Lemma \ref{L2} with   $t=D_1+V_0$, and $D_1$ is the constant 
      in Lemma \ref{lem1bound} for some $C_0\in (\ell E_{\ell^{-1}\alpha}, (\ell+1) E_{(\ell+1)^{-1}\alpha})$ (see \eqref{eq25}).
     We have
\begin{lemma}\label{compact}
  There is $C, c>0$ such that 
  for each
  $\boldsymbol{v}\in K_{\alpha}$, there hold $|v_i|^2\geq 2 \rho_1$, and 
  \[
   |v_i(x)|\leq C e^{-c |x|^2}, \quad i=1,\cdots,\ell.
  \]
    Moreover,
  $ K_{\alpha}$ is compact in $H^1(\R^N)^\ell$ and $H_\e^\ell$.
\end{lemma}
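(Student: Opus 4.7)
The plan is to verify the three assertions in turn: the $L^2$ mass lower bound via Lemma~\ref{L2}, the uniform Gaussian pointwise decay via elliptic comparison arguments exploiting (F2) and (F5), and finally the compactness by combining weak compactness with the tightness provided by the decay estimate. For the mass bound, fix $\boldsymbol{v}\in K_{\alpha}$ with associated $\boldsymbol{\mu}\in[\mu_0,V_0]^\ell$ and Lagrange multiplier $\lambda$. The energy bound defining $K_{\alpha}$, together with $\sum_i\mu_i|v_i|_2^2\geq\mu_0\alpha$, yields $\mathbb{J}(\boldsymbol{v})\leq C_0$ for the specific $C_0\in(\ell E_{\ell^{-1}\alpha},(\ell+1)E_{(\ell+1)^{-1}\alpha})$ fixed in the definition of $\rho_1$, so Lemma~\ref{lem1bound} gives $|\lambda|\leq D_1$. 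Each $v_i>0$ then satisfies the pointwise inequality $-\Delta v_i\leq f(v_i)+(D_1+V_0)v_i$, and Lemma~\ref{L2} applied with $t=D_1+V_0$ produces $|v_i|_2^2\geq 1/C_t\geq 2\rho_1$.

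For the Gaussian decay I would proceed by bootstrap. The uniform $H^1$ bound from Lemma~\ref{lem1bound}, combined with the $L^2$-subcritical growth in (F3), allows standard Moser iteration on the equation $-\Delta v_i+(\mu_i-\lambda)v_i=f(v_i)$ to produce a uniform $L^\infty$ bound; Schauder regularity and a covering argument then give $v_i(x)\to 0$ as $|x|\to\infty$ uniformly in $K_{\alpha}$. Invoking (F2), for any prescribed $M>0$ there exists $R>0$ (uniform in $K_{\alpha}$) such that $-\Delta v_i+Mv_i\leq 0$ on $\{|x|\geq R\}$, and comparison with a radial exponential supersolution delivers uniform exponential decay. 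To upgrade to Gaussian decay I use (F5): integrating $|f'(s)-\sigma\log s|\leq C$ yields $f(s)/s=\sigma\log s+O(1)$ as $s\to 0^+$. With a radial ansatz $w(x)=\exp(-a(|x|))$, the supersolution inequality for the resulting nonlinear problem reduces (to leading order at infinity) to $(a')^2\leq a''+(N-1)a'/r+\sigma a$; starting from the linear ansatz $a(r)=cr$ (exponential decay already in hand), iterating with intermediate ansätze $a(r)\sim r^{1+\theta}$ drives the exponent up to $2$, with the limiting profile $a(r)=\sigma r^2/4$ saturating the ODE. All constants remain uniform over $K_{\alpha}$ because their inputs---the $L^\infty$ bound, the $H^1$ bound, and the rate of decay to zero---are uniform. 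This Gaussian step is the principal technical obstacle: (F2) alone yields only exponential decay, and the quantitative logarithmic profile in (F5) is precisely what fixes the correct quadratic envelope.

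For the compactness, let $\boldsymbol{v}^n\in K_{\alpha}$ with associated $\boldsymbol{\mu}^n\in[\mu_0,V_0]^\ell$ and $\lambda^n$. By compactness of $[\mu_0,V_0]^\ell$ and Lemma~\ref{lem1bound} I extract subsequences with $\boldsymbol{\mu}^n\to\boldsymbol{\mu}^*$, $\lambda^n\to\lambda^*$, and $v_i^n\rightharpoonup v_i^*$ weakly in $H^1(\R^N)$. The uniform Gaussian decay established above gives tightness, so $v_i^n\to v_i^*$ strongly in $L^p(\R^N)$ for $p\in[2,2^*)$ and pointwise a.e. The mass lower bound forces $|v_i^*|_2^2\geq 2\rho_1$, and the limit solves \eqref{psa} at $\boldsymbol{\mu}^*$ by passing to the limit in the equation, using (F3), the uniform $L^\infty$ bound, and dominated convergence to handle $f(v_i^n)$ against test functions. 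Positivity and $v_i^*(0)=\max v_i^*$ survive by Schauder local uniform convergence and the strong maximum principle, while the energy bound on $\mathbb{J}_{\boldsymbol{\mu}^*}(\boldsymbol{v}^*)$ passes by weak lower semicontinuity; hence $\boldsymbol{v}^*\in K_{\alpha}$. Strong convergence in $H^1$ then follows from testing the equations by $v_i^n$ and identifying the limit of $\int_{\R^N}f(v_i^n)v_i^n+(\lambda^n-\mu_i^n)|v_i^n|_2^2$. Finally, compactness in $H_\e^\ell$ follows by dominated convergence since $\widetilde V(\e x)$ grows at most quadratically in $|x|$ and is absorbed by the uniform Gaussian bound on $v_i^n-v_i^*$.
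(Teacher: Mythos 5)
Your overall architecture matches the paper's: the $L^2$ lower bound comes from Lemma~\ref{lem1bound} combined with Lemma~\ref{L2}, and compactness follows from tightness once a uniform decay estimate is in hand. The paper's own proof is a one-liner precisely because it delegates the Gaussian decay entirely to Lemma~\ref{decay} in the Appendix, whose argument first establishes radial symmetry by moving planes and then analyzes the radial ODE (a phase-plane study of $z=-u'/(ru)$ together with comparison against the explicit Gausson $e^{a/\sigma+N/2}e^{-\sigma|x|^2/4}$). You instead sketch an iterative bootstrap on the decay exponent, which is a genuinely different route; this is the step that needs scrutiny.

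There is a real gap in that bootstrap. Starting from exponential decay, the recursion you describe produces exponents $\theta_{n+1}=1+\theta_n/2$, which converges to $2$ only asymptotically and never actually equals $2$; since $e^{-c r^{\theta}}$ with $\theta<2$ does \emph{not} dominate any fixed Gaussian $e^{-c'r^2}$, no finite stage of the iteration yields the stated bound $|v_i(x)|\le Ce^{-c|x|^2}$. Worse, the prefactor and the radius beyond which each supersolution comparison is valid both depend on the preceding stage and in general degrade, so one cannot simply "pass to the limit" along the sequence of intermediate ansätze; a single comparison with a Gaussian supersolution also fails because, with only exponential decay a priori, one cannot control $u$ at infinity relative to the Gaussian. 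A self-contained argument does exist (e.g.\ use moving planes as in Lemma~\ref{decay} to reduce to the radial ODE, then exploit the monotone energy $\frac{d}{dr}\bigl[\tfrac12(u')^2+F(u)+\tfrac{\lambda}{2}u^2\bigr]=-\tfrac{N-1}{r}(u')^2\le0$ together with (F5) to obtain the differential inequality $-u'\ge c\,u\sqrt{-\log u}$ and integrate), but that is a different mechanism than the one you describe and requires the radial symmetry you do not establish. Separately, a small imprecision: the bound available from the definition of $K_\alpha$ is $\mathbb{J}(\boldsymbol v)\le \ell E_{\ell^{-1}\alpha}+\tfrac12(V_0-\mu_0)\alpha$, which is a perfectly good constant to feed Lemma~\ref{lem1bound} but is not guaranteed to lie in the interval $(\ell E_{\ell^{-1}\alpha},(\ell+1)E_{(\ell+1)^{-1}\alpha})$ you quote; the conclusion is unaffected, but the justification you give for that step does not quite match the definition of $\rho_1$.
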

\begin{proof}
 The the conclusion follows from Lemma \ref{lem1bound}, Lemma \ref{L2}, and Lemma \ref{decay}.
\end{proof}

In what follows, we write $\boldsymbol{p}=(p_1,\cdot\cdot\cdot,p_{\ell})\in(\R^N)^{\ell}$, 
and set
\begin{equation}\label{xi}
  \xi(\boldsymbol{p})=\min_{1\leq i\neq j\leq \ell}|p_i-p_j|.
\end{equation}

The following estimate is essential to obtain a 
minimax geometry for the functional of \eqref{1.1'}, whose proof   will be given in the Appendix.
\begin{proposition}\label{prop35} Assume (F1)--F(5). Let  $\boldsymbol{v}=(v_1,v_2,\cdots,v_\ell)\in K_{\alpha}$. 
  Then there is $C>0$ such that  for sufficiently large $L$,
\[
J(B\sum_{j=1}^\ell v_j(\cdot -p_j)) + \frac{V_0 }{2}\int_{\R^N}|B\sum_{j=1}^\ell v_j(\cdot -p_j)|^2  \leq \mathbb{J} (\boldsymbol{v})  +\frac{V_0}{2}\alpha- C \xi(\boldsymbol p)e^{-\frac{\sigma \xi(\boldsymbol p)^2}{8}},
\]  
where $\boldsymbol{p}=(p_1,\cdots, p_\ell)\in (\R^N)^\ell$ with
$\xi(\boldsymbol{p})\geq \frac{L}{2}$, and $B=\alpha^{\frac12}|\sum_{j=1}^\ell v_j(\cdot -p_j)|_2^{-1}$.
\end{proposition}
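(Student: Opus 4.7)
The plan is to reduce the stated inequality to a direct energy comparison and then extract the strictly negative correction of order $-\xi e^{-\sigma\xi^2/8}$ from the non-Lipschitzian behavior of $f$ at the origin imposed by (F5). Since $|B\sum_j v_j(\cdot-p_j)|_2^2 = \alpha = \sum_j |v_j|_2^2$ by the very choice of $B$, the two $\tfrac{V_0}{2}\alpha$ terms cancel on both sides, and it suffices to prove
\[
J(w)\leq \mathbb{J}(\boldsymbol v) - C\,\xi(\boldsymbol p)\,e^{-\sigma\xi(\boldsymbol p)^2/8},\qquad w := B\sum_j w_j,\quad w_j := v_j(\cdot - p_j).
\]
The analytic engine is the sharp Gaussian decay coming from the logarithmic profile of $f$ near $0$: combining the equation $-\Delta v_j + (\mu_j-\lambda)v_j = f(v_j)$ satisfied by each $v_j\in K_\alpha$ (with $\mu_j-\lambda>0$ by Lemma \ref{L2}) with (F5) and a sub-/super-solution comparison, one obtains the refined uniform asymptotic $v_j(x)\sim e^{-\sigma|x|^2/4}$ and hence $f(v_j(x))\sim \sigma v_j(x)\log v_j(x)\sim -\tfrac{\sigma^2}{4}|x|^2 v_j(x)$ for $|x|$ large. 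Completing the square then produces the fundamental pointwise bound
\[
v_i(x-p_i)\,v_j(x-p_j)\ \leq\ Ce^{-\sigma|x-(p_i+p_j)/2|^2/2}\,e^{-\sigma|p_i-p_j|^2/8},
\]
which explains the exponent $\sigma\xi^2/8$ in the conclusion.

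Next, I would decompose $J(w)-\mathbb{J}(\boldsymbol v)$ into three contributions. The first is the normalization error $\tfrac{B^2-1}{2}\sum_j|\nabla v_j|_2^2$, where $1-B^2 = 2\sum_{i<j}\int w_i w_j / |\textstyle\sum w_j|_2^2 = O(e^{-\sigma\xi^2/8})$ by the bound just stated. The second is the kinetic cross-interaction: integrating by parts and using the Euler--Lagrange equation for $v_j$ rewrites each such term as
\[
\int \nabla w_i \cdot \nabla w_j\,dx = \int w_i\bigl(f(w_j) + (\lambda-\mu_j)w_j\bigr)\,dx.
\]
The dominant piece here is $\int w_i f(w_j)\sim -\tfrac{\sigma^2}{4}\int |x-p_j|^2 w_i w_j\,dx$, and a direct Gaussian computation gives $\int |x-p_j|^2 e^{-\sigma|x-(p_i+p_j)/2|^2/2}\,dx\sim c\xi^2$, so the cross term equals $-c\xi^2 e^{-\sigma\xi^2/8}$ with $c>0$. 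The third contribution, $\sum_j\int F(v_j) - \int F(w)$, I would split as $\sum_j\int[F(v_j)-F(Bv_j)]$ (bounded by $(1-B)\int|f|\,v_j = O(e^{-\sigma\xi^2/8})$ via the mean value theorem) plus the interaction piece $\sum_j\int_{A_j}[F(Bw_j + B\sum_{k\neq j}w_k) - F(Bw_j)]\,dx$ over the Voronoi cells $A_j = \{x:|x-p_j|\leq|x-p_k|\ \forall k\}$, which a second-order Taylor expansion of $F$ around $Bw_j$ together with the same Gaussian bounds controls to within $O(e^{-\sigma\xi^2/8})$.

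Summing the three contributions, the strictly negative kinetic cross-term $-c\xi^2 e^{-\sigma\xi^2/8}$ dominates all the other $O(e^{-\sigma\xi^2/8})$ pieces once $\xi$ is large, yielding $J(w) - \mathbb{J}(\boldsymbol v)\leq -\tfrac{c}{2}\xi^2 e^{-\sigma\xi^2/8} \leq -C\xi\, e^{-\sigma\xi^2/8}$, which is even slightly stronger than claimed. The main obstacle is making the Gaussian asymptotic $v_j(x)\sim e^{-\sigma|x|^2/4}$ quantitative enough, and uniform over the compact set $K_\alpha$, to pin down both the exact exponent $\sigma/4$ in the tail and the strict negativity (rather than mere smallness) of the dominant cross interaction. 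Lemma \ref{compact} only supplies the crude bound $|v_j(x)|\leq Ce^{-c|x|^2}$, so the refined decay must be built via explicit sub-/super-solution barriers constructed from the limiting logarithmic equation $-\Delta v = \sigma v\log v^2$ in the spirit of \cite{Tanaka-Zhang,ITWZ}. A secondary subtlety is controlling the Taylor remainder $\tfrac{1}{2}f'(\eta)b^2$ where $|f'|$ blows up logarithmically at $0$; this is tamed by the faster Gaussian decay of $b^2$ in the overlap region. Once the sharp decay is in hand, the remainder of the argument reduces to bookkeeping among explicit Gaussian integrals.
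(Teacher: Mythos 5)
Your identification of the sharp Gaussian profile $v_j(x)\sim e^{-\sigma|x|^2/4}$ (which the paper does establish, in Lemma~\ref{decay}, via an ODE analysis of $z=-u'/(ru)$ rather than a crude bound) and of the exponent $\sigma\xi^2/8$ from completing the square is correct, and the identity $\int\nabla w_i\cdot\nabla w_j=\int w_j(f(w_i)+\lambda_iw_i)$ is exactly the opening move of the paper's proof. However, the dominance argument is wrong, and the error is not a technicality but the central issue that makes this proposition delicate.

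You claim the kinetic cross term alone produces a clean $-c\,\xi^2e^{-\sigma\xi^2/8}$, while the potential interaction $\sum_j\int F(v_j)-\int F(w)$ is only $O(e^{-\sigma\xi^2/8})$ and hence negligible. That is not so. Already the first-order Taylor term of $F$ that you propose to expand, namely $\int_{A_j}f(Bw_j)\,B\sum_{k\neq j}w_k$, is itself of order $\xi^2e^{-\sigma\xi^2/8}$: on the overlap region $f(w_j)\sim\sigma w_j\log w_j\sim-\tfrac{\sigma^2}{4}|x-p_j|^2w_j$ with $|x-p_j|^2\sim\xi^2/4$ near the midpoint, exactly as in your kinetic computation. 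Since $F$ enters $J$ with a minus sign, this contributes a term of order $+c\,\xi^2e^{-\sigma\xi^2/8}$ that cancels the kinetic cross term to leading order. The net effect is only $O(\xi e^{-\sigma\xi^2/8})$ --- notice the proposition claims a coefficient $\xi$, not $\xi^2$ --- and extracting the \emph{sign} of this sub-leading remainder requires the precise monotonicity $F(t)/t^2$ strictly increasing from (F4): the paper uses this to write $F(\sum w_j)>\sum F(w_k)+\tfrac{F(\sum w_j)}{(\sum w_j)^2}\sum_{i\neq k}w_iw_k$ and then combines with $\tfrac{f(w_i)}{w_i}\leq\sigma\log w_i+C$ and $\tfrac{F(s)}{s^2}\geq\tfrac{\sigma}{2}\log s-C$ from (F5) to reduce everything to $\tfrac{\sigma}{2}\int w_iw_k\bigl(\log w_i-\log(\sum w_j)+C\bigr)$, which is nonpositive pointwise and whose absolute value is then bounded below by a $\xi e^{-\sigma\xi^2/8}$ term through the asymmetry $\log(w_i/w_k)\approx-\tfrac{\sigma\xi}{2}x_1$ near the midpoint. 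Your decomposition into three independent pieces cannot see this cancellation, and a second-order Taylor remainder argument for the $F$-piece cannot salvage it because the issue is already at first order. You would need to combine the kinetic cross term and the leading-order potential interaction before estimating, which is precisely what (F4) via $F(t)/t^2$ increasing enables and which your sketch omits.
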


\subsection{Local centers of mass}
We will introduce $\ell$ local centers of mass $(\Upsilon_1(U), \cdots, \Upsilon_\ell(U))$ as in \cite{Byeon-Tanaka}. 
First by Lemma \ref{compact},
 we can find  $R_0>1$ such that for 
each 
$\boldsymbol{U}=(U_1,\cdots,U_\ell)\in  K_{\alpha }$, there holds
\begin{equation}\label{equa 3.10}
\|U_j\|_{L^2(B(0,R_0/2))}>\frac{3}{4}\rho_1,\quad  \|U_j\|_{L^2(\mathbb R^N\setminus B(0,R_0))}<\frac{\rho_1}{8\ell}.
\end{equation}
 Then we have 
\begin{lemma}\label{lemma3.4}
  For   $u\in H^1(\R^N), (y_1,\cdots,y_\ell)\in (\R^N)^\ell, (U_1,\cdots, U_\ell)\in   K_{\alpha }$ such that 
  \[\xi(y_1,\cdots,y_\ell)>12R_0, \quad \|u-\sum_{j=1}^\ell U_j(\cdot - y_j)\|< \frac{\rho_1}{16 },
  \]
   there hold
  \[\int_{B(P, R_0)}   u^2 \geq \frac1 2 \rho_1^2\quad  \mbox{for}\quad  P\in \bigcup_{j=1}^{\ell}\overline B(y_j,R_0/2), \quad \int_{B(P, R_0)}   u^2\leq \frac{1}{16}\rho_1^2 \quad  \mbox{for}\quad  P\notin
  \bigcup_{j=1}^{\ell} B(y_j,2R_0).  
  \]
   
  \end{lemma}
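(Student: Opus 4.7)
The plan rests on two facts built into the choice of $R_0$ in \eqref{equa 3.10}: each translate $U_k(\cdot-y_k)$ carries more than $\tfrac34\rho_1$ of its $L^2$-norm on $B(y_k,R_0/2)$ and less than $\rho_1/(8\ell)$ outside $B(y_k,R_0)$, while the hypothesis $\xi(y_1,\ldots,y_\ell)>12R_0$ makes the balls $B(y_j,R_0)$ and $B(y_k,R_0)$ pairwise disjoint for $j\neq k$; in fact, any ball of radius at most $R_0$ whose center lies within distance $R_0/2$ of some $y_j$ sits entirely outside $B(y_k,R_0)$ for all $k\neq j$. Everything then reduces to two applications of the $L^2$ triangle inequality combined with the $H^1$ closeness hypothesis, which trivially dominates any $L^2$ norm of $u-\sum_k U_k(\cdot-y_k)$ over any subdomain by $\rho_1/16$.

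For the lower bound, I fix $j$ with $P\in\overline{B(y_j,R_0/2)}$, use the inclusion $B(y_j,R_0/2)\subset B(P,R_0)$, and write
\[
\|u\|_{L^2(B(y_j,R_0/2))}\geq \|U_j\|_{L^2(B(0,R_0/2))} - \Bigl\|u-\sum_{k=1}^\ell U_k(\cdot-y_k)\Bigr\|_{L^2} - \sum_{k\neq j}\|U_k(\cdot-y_k)\|_{L^2(B(y_j,R_0/2))}.
\]
The first term exceeds $\tfrac34\rho_1$ by \eqref{equa 3.10}, the second is bounded by the $H^1$ closeness $\rho_1/16$, and for each $k\neq j$ the separation $|y_k-y_j|>12R_0$ gives $B(y_j,R_0/2)\subset\R^N\setminus B(y_k,R_0)$, so that cross term is at most $\|U_k\|_{L^2(\R^N\setminus B(0,R_0))}<\rho_1/(8\ell)$. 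Squaring the resulting lower bound on $\|u\|_{L^2(B(y_j,R_0/2))}$ delivers the lower bound on $\int_{B(P,R_0)}u^2$.

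For the upper bound, given $P\notin\bigcup_{j=1}^\ell B(y_j,2R_0)$, the ball $B(P,R_0)$ is disjoint from every $B(y_j,R_0)$, so each $\|U_k(\cdot-y_k)\|_{L^2(B(P,R_0))}$ is dominated by the tail $\|U_k\|_{L^2(\R^N\setminus B(0,R_0))}<\rho_1/(8\ell)$. A single triangle inequality then yields $\|u\|_{L^2(B(P,R_0))}<\rho_1/16+\ell\cdot\rho_1/(8\ell)$, and squaring gives the upper bound.

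No genuine analytic obstacle arises here: the lemma is a bookkeeping estimate distributing the mass of $u$ among the $\ell$ concentration regions. The only care needed is to verify that the numeric constants in \eqref{equa 3.10} and the separation multiple $12R_0$ leave comfortable margins once both the $H^1$-discrepancy $\rho_1/16$ and the collective tail $\ell\cdot\rho_1/(8\ell)$ have been subtracted; the radii $R_0/2$, $R_0$, $2R_0$, $12R_0$ were chosen precisely to make this margin work.
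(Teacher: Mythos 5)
Your approach — triangle inequality plus the two calibrations in \eqref{equa 3.10}, with the separation $\xi(\boldsymbol y)>12R_0$ keeping the cross terms small — is the obvious and intended one, and the paper itself supplies no proof of this lemma, so there is nothing more to compare against on the conceptual level. The inclusions you invoke are correct: for $|P-y_j|\le R_0/2$ one has $B(y_j,R_0/2)\subset B(P,R_0)$, and $B(P,R_0)\subset \R^N\setminus B(y_k,R_0)$ for $k\ne j$; likewise, for $P\notin\bigcup_j B(y_j,2R_0)$ the ball $B(P,R_0)$ misses every $B(y_k,R_0)$.

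One caveat: you assert at the end that the numeric margins ``work,'' but on the lower-bound side they do not quite reach the stated threshold. The triangle inequality gives
\[
\|u\|_{L^2(B(P,R_0))} > \tfrac34\rho_1-\tfrac{\rho_1}{16}-(\ell-1)\tfrac{\rho_1}{8\ell} > \tfrac34\rho_1-\tfrac{\rho_1}{16}-\tfrac{\rho_1}{8}=\tfrac{9}{16}\rho_1,
\]
so squaring yields $\int_{B(P,R_0)}u^2>\tfrac{81}{256}\rho_1^2\approx 0.32\,\rho_1^2$, which is below the claimed $\tfrac12\rho_1^2$ (though comfortably above $\tfrac14\rho_1^2$). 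The upper bound, by contrast, has plenty of room: $\|u\|_{L^2(B(P,R_0))}<\tfrac{\rho_1}{16}+\tfrac{\rho_1}{8}=\tfrac{3}{16}\rho_1$ gives $\int<\tfrac{9}{256}\rho_1^2<\tfrac{1}{16}\rho_1^2$. The lower-bound shortfall appears to be a constant-level slip in the paper rather than a real obstruction: Step 1 of the proof of Proposition \ref{pro2.6} makes the same style of estimate and concludes only $\|W_1\|_{L^2(B(0,R_0))}>\tfrac{\rho_1}{2}$ (i.e., $\int>\tfrac14\rho_1^2$), and for the lemma's sole use — making $d(u,P)>0$ on the balls around the $y_j$'s and $d(u,P)=0$ away from them — exceeding the cutoff $\tfrac1{16}\rho_1^2$ in $\psi$ is what matters, and your bound clears that with room to spare. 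So: right proof, but do not claim the stated $\tfrac12\rho_1^2$ follows ``comfortably'' from \eqref{equa 3.10}; either the lemma's constant (and the matching $\tfrac12\rho_1^2$ in the definition of $\psi$) should be $\tfrac14\rho_1^2$, or \eqref{equa 3.10} needs a sharper constant than $\tfrac34$.
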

We define 
\begin{equation}
 Z =\Set{u\in H^1(\R^N) | \|u-\sum_{j=1}^\ell U_j(\cdot - y_j)\|< \frac{\rho_1}{16},  \xi(y_1,\cdots,y_\ell)\geq 12R_0,  (U_1,\cdots,U_\ell)\in K_{\alpha } }.
\end{equation}
For $u\in H^1(\R^N) $ and $P\in\mathbb R^N$, we define
\begin{equation}
d(u,P)=\psi\left( \int_{B(P, R_0)}   u^2\right),
\end{equation}
with $\psi\in C_0^\infty([0,\infty), [0,1])$ satisfying
\begin{equation*}
\psi(r)=
\begin{cases}
0 \  \ &r\in[0,\frac{1}{16}\rho_1^2],\\
1 \ \ &r\in[\frac{1}{2}\rho_1^2,\infty).
\end{cases}
\end{equation*}
By   Lemma \ref{lemma3.4}, for any $u\in   Z$ there exist $\ell$ disjoint balls $B_j$ satisfying
\begin{equation*}
  \begin{cases}
  {\rm diam} B_j=5R_0 \  \ &\text {for\ all\ }j\in\{1,2,\cdot\cdot\cdot,\ell\},\\
d(u,\cdot)\not\equiv 0 \  \  &\text{on}\ B_j\ \text {for\ all\ }j\in\{1,2,\cdot\cdot\cdot,\ell\},\\
d(u,\cdot) \equiv 0 \  \  &\text{on}\ \mathbb R^N\setminus \cup_{j=1}^{\ell}B_j.
\end{cases}
\end{equation*}
For   $B_j$, we define
\begin{equation}
\Upsilon_{j}(u)=\frac{\int_{B_j}d(u,P)PdP}{\int_{B_j}d(u,P)dP}\in B_j.
\end{equation}

It is clear that
 $(\Upsilon_1(u), \cdots, \Upsilon_\ell(u))$ is uniquely determined up to permutation and it is
independent of the choice of 
each $B_j$. 
Similar to the argument of \cite{Byeon-Tanaka}, we can assume that
\[
  \Upsilon(u)=(\Upsilon_1(u),\cdots,\Upsilon_\ell(u)),
\]
is continuous up to permutations. 
Note that for a continuous function $\varphi(\boldsymbol{p})$ which is independent of permutation of  $p_i$,  
$\varphi(\Upsilon(u))$ is well defined and continuous.
Moreover, similarly to \cite[Lemma 2.5]{zhangzhang2}, we have the following properties of $\Upsilon$.
 
\begin{lemma}\label{lem3.4}
The following statements hold for true.
\begin{itemize}
  \item[(i)] For $u\in    Z$, we have
  $|\Upsilon_j(u)-y_j|\leq 2R_0$ $(j=1,2,\cdot\cdot\cdot,\ell_0)$ up to permutation.
\item[(ii)] $\Upsilon_j(u)$ is $C^1$ continuous for each $u\in  Z$. Moreover, there exists a constant $D_2>0$   such that
    $$\sup_{u\in  Z}\|\Upsilon_j'(u)\|\leq D_2.$$
    
\item[(iii)] if $u,v\in    Z$ satisfy for some $j\in\{1,\cdot\cdot\cdot,\ell\}$ and $h\in\mathbb R^N$
$$v(x-h)=u(x)\ \ \ \text{in}\ B(\Upsilon_j(u),4R_0),$$
then $\Upsilon_j(v)=\Upsilon_j(u)-h$.
\item[(iv)] $\Upsilon'(u)v=0$ if $\supp v\subset \R^N \setminus \cup_{j=1}^\ell B(\Upsilon_j(u), 4R_0)$.
\end{itemize}
\end{lemma}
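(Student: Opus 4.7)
The plan is to exploit two ingredients throughout: the quantitative support information for $d(u,\cdot)$ coming from the preceding lemma, and the smoothness of $\psi$, combined with a uniform lower bound on the denominator appearing in the definition of $\Upsilon_j$. I will treat the four items in order.

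For (i), recall that on $Z$ we have $d(u,P)=1$ for $P\in \bigcup_j \bar B(y_j,R_0/2)$ and $d(u,P)=0$ for $P\notin \bigcup_j B(y_j,2R_0)$. Since $\xi(y_1,\dots,y_\ell)\geq 12R_0$, the closed balls $\bar B(y_j,2R_0)$ are pairwise disjoint, so after an appropriate permutation each $B_j$ of diameter $5R_0$ can be taken to enclose exactly one $\bar B(y_j,2R_0)$. Then $\Upsilon_j(u)$ is a convex combination of points in $\bar B(y_j,2R_0)$, giving $|\Upsilon_j(u)-y_j|\leq 2R_0$.

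For (ii), I would write $\Upsilon_j=N_j/D_j$ with $N_j(u)=\int_{B_j}d(u,P)P\,dP$ and $D_j(u)=\int_{B_j}d(u,P)\,dP$. Because $d(u,\cdot)\equiv 1$ on $\bar B(y_j,R_0/2)\subset B_j$, we have $D_j(u)\geq |B(0,R_0/2)|>0$ uniformly on $Z$. The smoothness of $\psi$ makes $N_j,D_j$ of class $C^1$, and differentiation under the integral yields
\[
\Upsilon_j'(u)\varphi=\frac{1}{D_j(u)}\int_{B_j}\psi'\Bigl(\int_{B(P,R_0)}u^2\Bigr)\cdot 2\int_{B(P,R_0)}u\varphi\,(P-\Upsilon_j(u))\,dP.
\]
Cauchy--Schwarz, the uniform $H^1$-bound on elements of $Z$ inherited from Lemma~\ref{compact}, and $\mathrm{diam}(B_j)=5R_0$ together produce a uniform operator-norm bound $\|\Upsilon_j'(u)\|\leq D_2$.

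For (iii), I would use that $v(\cdot-h)=u$ on $B(\Upsilon_j(u),4R_0)$, combined with the localization of the support of $d(u,\cdot)$ near $y_j$ established in (i), to perform the substitution $P\mapsto P-h$ in the defining integrals: the identity $\int_{B(P,R_0)}u^2=\int_{B(P-h,R_0)}v^2$ holds whenever $B(P,R_0)\subset B(\Upsilon_j(u),4R_0)$, which covers all $P$ in the relevant support. A compatible working ball for $v$ near $\Upsilon_j(u)-h$ is $B_j-h$, and the change of variables gives $\Upsilon_j(v)=\Upsilon_j(u)-h$. For (iv), the derivative formula from (ii) vanishes as soon as $\int_{B(P,R_0)}u\varphi=0$ on the set where $\psi'\bigl(\int_{B(P,R_0)}u^2\bigr)\neq 0$, which sits in the annular shell $\bigcup_j(\bar B(y_j,2R_0)\setminus B(y_j,R_0/2))$. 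The hypothesis $\supp\varphi\subset \R^N\setminus \bigcup_j B(\Upsilon_j(u),4R_0)$ then forces the inner integral to vanish for all relevant $P$.

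The main bookkeeping subtlety is to match the radii $R_0/2$, $R_0$, $2R_0$, $4R_0$, and $5R_0$ across (iii) and (iv), so that the set on which $\psi'$ is active, together with an $R_0$-thickening, sits compatibly inside the relevant neighborhoods of $\Upsilon_j(u)$; this is to be arranged at the level of the construction of $\psi$, $R_0$, and $B_j$. One also has to single out the correct index among the permutation-ambiguous centers in (iii), which follows from the local character of the translation and the disjointness of the enlarged balls around the $\Upsilon_j(u)$'s.
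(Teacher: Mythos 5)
Your overall strategy for this lemma (direct computation with the defining formulas $N_j/D_j$, uniform lower bound on $D_j$ from the inclusion $\bar B(y_j,R_0/2)\subset\supp d(u,\cdot)$, the quotient-rule expression for $\Upsilon_j'$, and a change of variables for the translation invariance) is the standard one and is what the paper itself implicitly appeals to by citing earlier works. Items (i) and (ii) are handled correctly: the convex-hull argument gives $|\Upsilon_j(u)-y_j|\leq 2R_0$, and the derivative formula together with the uniform lower bound $D_j(u)\geq |B(0,R_0/2)|$, $|\psi'|\leq C$, $|P-\Upsilon_j(u)|\leq 5R_0$, and a uniform $L^2$ bound for $u\in Z$ give the claimed operator-norm bound.

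The gap you flag at the end is, however, a genuine one and it is not merely a matter of ``arranging'' $\psi$, $R_0$, $B_j$. Concretely, in (iv) the set where $\psi'\bigl(\int_{B(P,R_0)}u^2\bigr)\neq 0$ sits only in $\bigcup_j(B(y_j,2R_0)\setminus\bar B(y_j,R_0/2))$, so the $R_0$-thickening $B(P,R_0)$ sweeps out a region contained in $\bigcup_j B(y_j,3R_0)$. To conclude $\int_{B(P,R_0)}u\varphi=0$ you need this to sit inside $\bigcup_j B(\Upsilon_j(u),4R_0)$, i.e., you need $|\Upsilon_j(u)-y_j|\leq R_0$; but (i) only furnishes $|\Upsilon_j(u)-y_j|\leq 2R_0$, giving $\bigcup_j B(y_j,3R_0)\subset\bigcup_j B(\Upsilon_j(u),5R_0)$. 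There is no mechanism at hand that tightens the barycenter bound to $R_0$: the elementary estimate $|\Upsilon_j(u)-y_j|\leq 2R_0\bigl(1-|B(0,R_0/2)|/|B(0,2R_0)|\bigr)=2R_0(1-4^{-N})$ stays strictly above $R_0$ for all $N\geq 1$. The same mismatch affects (iii): the identity $\int_{B(P,R_0)}u^2=\int_{B(P-h,R_0)}v^2$ requires $B(P,R_0)\subset B(\Upsilon_j(u),4R_0)$, i.e., $|P-\Upsilon_j(u)|\leq 3R_0$, whereas $P$ in the transition zone may be at distance up to $4R_0$ from $\Upsilon_j(u)$. So as written, (iii) and (iv) follow with $5R_0$ in place of $4R_0$, and closing the remaining gap requires either a sharper version of (i) (which needs further input about the shape of the transition region) or a corresponding adjustment of the radii in the statement and throughout Section~\ref{subsec 3.3}.
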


\subsection{Penalized functional}\label{subsec 3.3}

We   use notation $ \frac1\varepsilon O^\delta=\set{x\in\R^N|\e x\in O^\delta}$ for $\e,\delta>0$.
Take $\phi\in C_0^\infty(\R^N)$ such that $0\leq \phi\leq 1$ and $|\nabla \phi|\leq 4/\delta_0$ in $\R^N$, $\phi=1$ for $|x|\leq \delta_0/2$, and
$\phi=0$ for $|x|\geq \delta_0$. Set $\phi_\e(x)=\phi(\e x)$.
 For   $L\geq 100R_0$,
set 
\[
Z_{L,\e}=\Set{\sum_{j=1}^\ell (\phi_\e U_j)(\cdot - y_j)| (U_1,\cdots,U_\ell)\in K_{\alpha}, \  \set{y_1, y_2, \cdots, y_\ell}\subset \frac1{\varepsilon} O^{4\delta_0}, \    \xi(y_1,\cdots,y_\ell)\geq L }. 
\]
By compactness of $K_{\alpha}$ and the decay estimate of $U_j\in K_{\alpha}$ (see Lemma \ref{decay}), we know that 
\[
 \| \sum_{j=1}^\ell (\phi_\e U_j)(\cdot - y_j)- \sum_{j=1}^\ell   U_j(\cdot - y_j)\|\leq C e^{-c\e^{-1}},
\]
for some $C, c>0$ independent of the choice of $\e$, $y_j$ and $U_j$, $j=1,\cdots,\ell$.
There for  $ \rho\le\frac{1}{32}\rho_1$, if $u\in H_\e$
is such that 
$\dist_{H_\varepsilon}(u, Z_{L,\e})<  {\rho }$,
then $\Upsilon(u)$ is well defined for small $\e$, since $\|w\|\leq \|w\|_\e$ holds for each $w\in H_\e$.

For
$0<\rho\le\frac{1}{32}\rho_1$, $\delta\in[\delta_0, 3\delta_0]$, set 
 \begin{equation}
 Z(\rho,\delta)= \Set{u\in \mathcal M_\alpha^\varepsilon|\dist_{H_\varepsilon}(u, Z_{L,\e})<  {\rho } ,\quad \max_{1\leq j\leq \ell}\dist(\varepsilon\Upsilon_j(u), O)<\delta     },
 \end{equation}
 where $\mathcal M_\alpha^\e:=\set{u\in H_\e | \int_{\R^N}u^2=\alpha}$.
 Note that $Z(\rho,\delta)$ depends on $L$ and $\e$, we omit them for the sake of brevity.
It is   sufficient  to impose 
\begin{equation}\label{ep}
  0<\varepsilon<\varepsilon_L:=(\frac\delta{4L})^4,
\end{equation} 
so that $Z(\rho,\delta) $ is nonempty when $L$ is   large enough. 
In what follows, we will always assume $\varepsilon\in(0,\varepsilon_L)$ and $L$ is sufficiently large.
\begin{remark}
 Let $\rho<\rho'$ and $\delta<\delta'$. Then,
  \[
  \dist_{H_\varepsilon}(\partial Z(\rho', \delta'), Z(\rho,\delta))>0.  
  \]
  In fact, if $\dist_{H_\varepsilon}(u, Z_{L,\e})=  \rho' $, then $\dist_{H_\varepsilon}(u, Z(\rho,\delta))\geq \rho'-\rho$. 
  If $\dist_{H_\varepsilon}(u, Z_{L,\e})\leq \rho'$ and $ \dist(\e\Upsilon_j(u), O)=\delta'$ for some $j$, then by Lemma \ref{compact} and 
  Lemma \ref{lem3.4} (i), for $\varepsilon$ sufficiently small,
  $\dist_{H_\varepsilon}(u, Z(\rho,\delta))> \|U_j\|_\varepsilon/{2}\geq  {\rho_1}/{2}$.
\end{remark}

As in  \cite{zhang1}, we choose $H(s)\in C_0^\infty ([-3, 3];[0,1])$, with $H(s)=1$ for $|s|\leq 1$, $H'(s)$ is odd
  and $-1\leq H'(s) \leq 0$ for  $s\geq 0$.
Denoting 
\[ \widetilde V_\varepsilon(x)=\widetilde V(\varepsilon x),\quad V_\varepsilon(x)=V(\varepsilon x),\quad \overline V_\varepsilon =V_\varepsilon -\widetilde V_\varepsilon,\]
 we define $\Psi_\varepsilon$ by
\begin{equation}\label{Psi}
\Psi_\varepsilon(u)= \frac{1}{2}\int_{\mathbb R^N} \overline V_\varepsilon (x) H( e^{\e |x|^2} u) u^2 \rd x.
\end{equation}
 
Note that   $\Psi_\varepsilon$ is well-defined on $H_\e$, and if $u(x) \le e^{-\e |x|^2}$ for $x\in \R^N\setminus  B(0, M_0/\e)$, then
\begin{equation}\label{psi'}
   \int_{\R^N}( \nabla u \nabla v+\widetilde V(\varepsilon x)uv)
    +\Psi_\varepsilon'(u)v= \int_{\R^N}( \nabla u \nabla v+ V(\varepsilon x)uv), \quad u, v\in H_\e.
\end{equation}
 We have the following lemma.
\begin{lemma}[Corollary 2.2 of \cite{zhang1}]\label{cor2.3}
 For some $C,c>0$ independent of $\varepsilon$, there holds
\begin{align*}
\sup_{u\in H_\varepsilon}|\Psi_\varepsilon(u)|+
 \sup_{u\in H_\varepsilon}\|\Psi_\varepsilon'(u)\|_{H_\varepsilon^{-1}}\leq Ce^{-c\varepsilon^{-1}},
\end{align*}
where  $\|\cdot\|_{H_\varepsilon^{-1}}$denotes the norm on the dual space of $H_\varepsilon$.
\end{lemma}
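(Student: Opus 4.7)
The plan rests on two elementary observations. First, the penalization weight $\overline V_\varepsilon=V_\varepsilon-\widetilde V_\varepsilon$ vanishes wherever $|\varepsilon x|\le M_0$, by the very definition \eqref{tildev} of $\widetilde V$; thus $\overline V_\varepsilon$ is supported in the exterior region $\{|x|\ge M_0/\varepsilon\}$. Second, on the support of $H(e^{\varepsilon|x|^2}u)$ one has $|e^{\varepsilon|x|^2}u|\le 3$, forcing $|u|\le 3e^{-\varepsilon|x|^2}$. In the exterior region this pointwise bound already gives $|u|\le 3e^{-M_0^2/\varepsilon}$, and more generally it produces Gaussian decay in the rescaled variable $\sqrt\varepsilon\,x$.

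With these two facts the bound on $|\Psi_\varepsilon(u)|$ is a direct computation. I would first use (V1), which forces $V(y)\ge -A|y|^2-B$ for some $A,B>0$, to obtain $|\overline V_\varepsilon(x)|\le C(1+\varepsilon^2|x|^2)$ on its support. Combined with $H(e^{\varepsilon|x|^2}u)u^2\le 9e^{-2\varepsilon|x|^2}$, this reduces the estimate to
\[
|\Psi_\varepsilon(u)|\le C\int_{|x|\ge M_0/\varepsilon}(1+\varepsilon^2|x|^2)\,e^{-2\varepsilon|x|^2}\,\rd x.
\]
The change of variable $y=\sqrt{2\varepsilon}\,x$ turns the right-hand side into $C\varepsilon^{-N/2}\int_{|y|\ge M_0\sqrt{2/\varepsilon}}(1+\tfrac12\varepsilon|y|^2)e^{-|y|^2}\rd y$, whose Gaussian tail is dominated by $Ce^{-M_0^2/\varepsilon}$; the algebraic factor $\varepsilon^{-N/2}$ is absorbed into the exponential by choosing any $c<M_0^2$. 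This gives the first half of the claim, uniformly in $u$.

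For the second half I would differentiate explicitly,
\[
\Psi_\varepsilon'(u)v=\tfrac12\int_{\R^N}\overline V_\varepsilon(x)\Bigl[H'(e^{\varepsilon|x|^2}u)e^{\varepsilon|x|^2}u^2+2H(e^{\varepsilon|x|^2}u)u\Bigr]v\,\rd x,
\]
and observe that the bracketed factor is pointwise bounded by $Ce^{-\varepsilon|x|^2}$ on the support: the apparently dangerous factor $e^{\varepsilon|x|^2}$ coming from differentiating $H$ is tamed because $u^2\le 9e^{-2\varepsilon|x|^2}$ there, while $|H(e^{\varepsilon|x|^2}u)u|\le |u|\le 3e^{-\varepsilon|x|^2}$ directly. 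Cauchy--Schwarz together with $|v|_2\le\|v\|_\varepsilon$ (valid since $\widetilde V_\varepsilon\ge 1$) then reduce the dual-norm estimate to exactly the same kind of Gaussian-tail integral as above, yielding $\|\Psi_\varepsilon'(u)\|_{H_\varepsilon^{-1}}\le Ce^{-c/\varepsilon}$.

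The only delicate point is the matching of scales: the weight $|\overline V_\varepsilon|$ grows at worst like $\varepsilon^2|x|^2$, while the cutoff produces decay at rate $e^{-c\varepsilon|x|^2}$, so in the region $|x|\ge M_0/\varepsilon$ the exponent in the tail is of order $M_0^2/\varepsilon$. This is precisely what produces a bound of the form $e^{-c/\varepsilon}$ rather than merely $e^{-c/\sqrt\varepsilon}$; no subtler argument is required, since (V1) limits the negative growth of $V$ to quadratic and thereby pairs exactly with the quadratic exponent in the cutoff.
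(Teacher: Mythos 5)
Your argument is correct and self-contained. The paper does not reproduce a proof here---it cites Corollary 2.2 of \cite{zhang1}---so there is no in-text argument to compare against; your proof is the natural one and essentially the argument that reference contains. The three ingredients you isolate are exactly the right ones: $\overline V_\varepsilon=V_\varepsilon-\widetilde V_\varepsilon$ vanishes on $\{|x|<M_0/\varepsilon\}$ and, since $\widetilde V\ge V$, satisfies $|\overline V_\varepsilon(x)|=\max\{0,|\varepsilon x|^2-V(\varepsilon x)\}\le C(1+\varepsilon^2|x|^2)$ by (V1); on the support of $H$ (and of $H'$) the cutoff forces $|u|\le 3e^{-\varepsilon|x|^2}$, which in the derivative formula is precisely what cancels the dangerous factor $e^{\varepsilon|x|^2}$ coming from differentiating $H$; and the resulting Gaussian-tail integral over $\{|x|\ge M_0/\varepsilon\}$ produces $e^{-c/\varepsilon}$ after absorbing the polynomial factors, using $|v|_2\le\|v\|_\varepsilon$ (valid since $\widetilde V\ge1$) for the dual-norm bound.
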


Let $\xi(\boldsymbol{p})$ be the function in \eqref{xi} for $\boldsymbol{p}=(p_1,\cdot\cdot\cdot,p_{\ell})\in(\R^N)^{\ell}$. We note that $\boldsymbol{p}\mapsto\min\set{\xi(\boldsymbol{p}),\varepsilon^{-\frac34}}$ is  Lipschitz continuous and independent of permutations of $p_i$.
 
  By the integral convolution with mollifiers, we can find a smooth function  $ \xi_1(\boldsymbol{p})\in C^1((\R^N)^{\ell})$   independent of permutations    of  $p_i$, such that  for some constant $C(N,\ell)>0$ depending only on $N,\ell$,
  \[ \mbox{$ | \xi_1(\boldsymbol{p})-\min\set{\xi(\boldsymbol{p}),\varepsilon^{-\frac34}}|\leq 1$ and $|\nabla\xi_1(\boldsymbol{p})|\leq C(N,\ell) $},\quad \boldsymbol{p}\in(\R^N)^{\ell},\]
Then $u\mapsto\xi_1(\Upsilon(u))$ is well-defined and $C^1$ continuous.
  Take $\chi \in C^\infty(\mathbb R^N;[0,1])$ such that 
  \[\mbox{$\chi=1$ in $  \mathbb R^N\setminus  B(0,  \frac 1  5)$, $\chi =0$ in $ B(0, \frac 1 {10})$ and $|\nabla  {\chi}|\leq 20 $.}\]
  Setting $\chi_{ u}(x)=\Pi_{j=1}^\ell \chi \left(\frac{x-\Upsilon_j(u)}{\xi_1(\Upsilon(u))}\right)$, 
  we note that $\chi_{ u}$ is independent of permutations of $\Upsilon_j(u)$. 
  Define
  $$\Phi_\varepsilon(u)=\left(\xi_1(\Upsilon(u)) \int_{\R^{N} }\chi_{u} u^2 \mathrm dx-1\right)_+^2.
  $$
Then,   by Lemma \ref{lem3.4}, we can check that
\begin{lemma}\label{Phi'}
There is  $C_0>0$ independent of $L, \varepsilon$ such that  for $u\in  Z( \frac{\rho_1}{32}, 3\delta_0)$ and any $v\in H_\varepsilon$,
\[\begin{aligned}
  \Bigg| \Phi_{\varepsilon}'(u)v-&4\Phi_{\varepsilon}(u)^{\frac12} \xi_1(\Upsilon(u))\int_{\R^{N} }\chi_u uv     \Bigg| 
  \leq  C_0\Phi_{\varepsilon}(u)^{\frac12}  \|v\|_\e  \int_{ \R^N\setminus \cup_{j=1}^\ell B(\Upsilon_j(u),\frac{1}{10}\xi_1( \Upsilon(u)))}  u^2.
\end{aligned}\]
Moreover, if $\supp v\subset \R^N \setminus \cup_{j=1}^\ell B(\Upsilon_j(u), 4R_0)$, then 
\[ 
   \Phi_{\varepsilon}'(u)v=4\Phi_{\varepsilon}(u)^{\frac12} \xi_1(\Upsilon(u))\int_{\R^{N} }\chi_u uv.
 \]
\end{lemma}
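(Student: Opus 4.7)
The plan is to apply the chain rule to $\Phi_\varepsilon$ and identify the dominant contribution, then estimate the remainder using a support analysis of the various cutoffs. Write $\Phi_\varepsilon(u)=g(u)_+^2$ where
\[
g(u) := \xi_1(\Upsilon(u))\int_{\R^N}\chi_u u^2\,\rd x - 1.
\]
Since $s\mapsto s_+^2$ is $C^1$ with derivative $2s_+$, and since $\Phi_\varepsilon(u)^{1/2}=g(u)_+$, we have $\Phi_\varepsilon'(u)v=2\Phi_\varepsilon(u)^{1/2}\,g'(u)v$ (the formula remains valid at $g=0$). So everything reduces to computing $g'(u)v$ and isolating the desired main term.

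Differentiating $g(u)$ produces three contributions:
(a) varying the factor $u^2$ directly gives $2\xi_1(\Upsilon(u))\int_{\R^N}\chi_u\,uv\,\rd x$;
(b) varying $\xi_1(\Upsilon(u))$ gives $\bigl[\xi_1'(\Upsilon(u))\,\Upsilon'(u)v\bigr]\int_{\R^N}\chi_u u^2\,\rd x$;
(c) varying $\chi_u$, which depends on $u$ only through $\Upsilon_j(u)$ and $\xi_1(\Upsilon(u))$, gives a term of the form $\xi_1(\Upsilon(u))\int_{\R^N} u^2\,(D_u\chi_u)(v)\,\rd x$, where $D_u\chi_u(v)$ is explicitly computable from the chain rule in terms of $\nabla\chi$, $\Upsilon'(u)v$, and $\xi_1'(\Upsilon(u))\Upsilon'(u)v$.

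After multiplying by $2\Phi_\varepsilon(u)^{1/2}$, term (a) produces exactly the claimed main term $4\Phi_\varepsilon(u)^{1/2}\xi_1(\Upsilon(u))\int \chi_u uv$. For the error it is essential to track supports: since $\chi\equiv 0$ on $B(0,\tfrac{1}{10})$, the function $\chi_u$ vanishes on $\cup_j B(\Upsilon_j(u),\tfrac{1}{10}\xi_1(\Upsilon(u)))$; and since $\nabla\chi$ is supported in the annulus $B(0,\tfrac15)\setminus B(0,\tfrac1{10})$, the function $D_u\chi_u$ is supported in $\cup_j \bigl(B(\Upsilon_j,\tfrac15\xi_1)\setminus B(\Upsilon_j,\tfrac1{10}\xi_1)\bigr)$. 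In both cases the integrand in (b) and (c) is already localized in $\R^N\setminus \cup_j B(\Upsilon_j(u),\tfrac1{10}\xi_1(\Upsilon(u)))$. Combining this with $|\xi_1'|\leq C(N,\ell)$, $\|\Upsilon'(u)\|\leq D_2$ from Lemma~\ref{lem3.4}(ii), $|\nabla\chi|\leq 20$, and $|\Upsilon'(u)v|\leq D_2\|v\|_\varepsilon$, both (b) and (c) yield a bound of the form $C_0\,\Phi_\varepsilon(u)^{1/2}\|v\|_\varepsilon\int_{\R^N\setminus \cup B(\Upsilon_j,\xi_1/10)}u^2$, with $C_0$ depending only on $N,\ell,\delta_0,D_2$.

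For the second statement, when $\supp v\subset \R^N\setminus\cup_j B(\Upsilon_j(u),4R_0)$, Lemma~\ref{lem3.4}(iv) gives $\Upsilon'(u)v=0$. Since $\xi_1(\Upsilon(u))$ and $\chi_u$ depend on $u$ only through $\Upsilon(u)$, both (b) and (c) vanish identically, leaving only the main term and giving the asserted equality. The only mildly delicate point is the support bookkeeping — confirming that the derivative of the nested cutoff $\chi_u$ really is localized on the annular region — but this follows directly from the definition of $\chi$ and the scaling by $\xi_1(\Upsilon(u))$; no analytic difficulty is expected beyond this careful accounting.
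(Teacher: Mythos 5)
Your proposal is correct and carries out exactly the verification the paper leaves implicit (the paper only remarks "by Lemma \ref{lem3.4}, we can check that..."). The chain-rule decomposition into (a) the direct $u^2$ term, (b) the $\xi_1(\Upsilon(u))$ term, and (c) the $\chi_u$ term is the natural approach; the key observations — that $\chi_u$ and $D_u\chi_u$ are supported in $\R^N\setminus\cup_j B(\Upsilon_j(u),\tfrac1{10}\xi_1(\Upsilon(u)))$, that the $1/\xi_1$ scaling factor from $\nabla\chi$ cancels the prefactor $\xi_1$, and that Lemma \ref{lem3.4}(iv) kills both error terms when $\supp v$ is disjoint from $\cup_j B(\Upsilon_j(u),4R_0)$ — are all correctly identified and quantified.
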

We also modified the nonlinearity term. Recalling the definition of $f_1$ and $f_2$, we define odd function 
$$f_{2,K}(t):=\min\{f_2(t),f_2(K)\}\ \ \text{for any }
K>0\ \text{and }t\geq 0.$$  
Set $f_{K}(t):=-f_1(t)+f_{2,K}(t)$,
$F_{2,K}(t):=\int_0^{t} f_{2,K}(s) \rd s$ and 
$F_{K}(t):=-F_1(t)+F_{2,K}(t)$. Then necessarily, 
$$f_K(t)=\min\{f(t),f(K)\}.$$
Define the functional:
\begin{equation}\label{2.3}
\Gamma_{\varepsilon,K}(u)=\frac{1}{2}\int_{\R^N}(|\nabla u|^2+\widetilde V(\varepsilon x)u^2)
- \int_{\mathbb R^N}F_K(u)+\Phi_{ \varepsilon}(u) +\Psi_\varepsilon(u),\quad u\in  Z( \frac{\rho_1}{32}, 3\delta_0).
\end{equation}
We note that by \cite[Lemma 2.3]{zhang1}, $\Gamma_{\varepsilon,K}$ is well-defined and is of  class $C^1$ on $Z_L(3\delta_0, \rho_1)$.
For $u\in H_\e$, we also set \[
G(u)=\frac12\int_{\R^N}u^2 \rd x. 
\]
\begin{lemma}\label{lem3.12}
  If $u\in  Z( \frac{\rho_1}{32}, 3\delta_0)$   satisfies
    $\Gamma_{\varepsilon,K}(u) \le (\ell+1) E_{(\ell+1)^{-1}\alpha}+\frac12V_0\alpha$,   
   then   the following quantities are bounded by a constant $C_0>0$ independent of $\varepsilon$, $L$ or $K$:
   \begin{equation*}
    \begin{gathered}
     \|u\|_\e,\   \Phi_{ \varepsilon}(u), \ \int_{\R^N}  f_1(u)u,\ \int_{\R^N}  f_{2,K}(u)u,\ \int_{\R^N} F_1(u),\ \int_{\R^N} F_{2,K}(u), \ 
         \xi_1(\Upsilon(u))\int_{\R^{N} }\chi_u  u^2 .
    \end{gathered}
   \end{equation*}
       If we assume additionally that $\|\Gamma_{\e,K}' (u )-\lambda G'(u)\|_{H_\e^{-1}}\leq 1$, then making $C_0$ larger if necessary, it holds $|\lambda|\leq C_0$.
\end{lemma}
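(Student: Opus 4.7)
The plan is to extract an absorbable $L^2$-subcritical term from the functional using the Gagliardo--Nirenberg inequality, reduce the energy bound to a coercivity estimate, and then read off the individual bounds. First I would rewrite
\[
\Gamma_{\varepsilon,K}(u)=\tfrac12\|u\|_\varepsilon^2+\int_{\R^N}F_1(u)-\int_{\R^N}F_{2,K}(u)+\Phi_\varepsilon(u)+\Psi_\varepsilon(u),
\]
using $F_K=-F_1+F_{2,K}$. Since $F_1\ge 0$, $\Phi_\varepsilon\ge 0$, and $|\Psi_\varepsilon(u)|\le Ce^{-c\varepsilon^{-1}}$ by Lemma \ref{cor2.3}, the only bad term is $-\int F_{2,K}(u)$.

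Second, by Lemma \ref{F12} (ii), for any $\tau>0$ there is $C_\tau>0$ with $F_{2,K}(t)\le F_2(t)\le (c_0+\tau)|t|^{2+4/N}+C_\tau t^2$, so Gagliardo--Nirenberg and $|u|_2^2=\alpha$ yield
\[
\int_{\R^N}F_{2,K}(u)\le (c_0+\tau)S(N)\alpha^{2/N}|\nabla u|_2^2+C_\tau\alpha.
\]
Because $\alpha\in(0,\alpha_N)$, I can pick $\tau$ so small that $\kappa:=\tfrac12-(c_0+\tau)S(N)\alpha^{2/N}>0$. Combining with the hypothesis $\Gamma_{\varepsilon,K}(u)\le(\ell+1)E_{(\ell+1)^{-1}\alpha}+\tfrac12V_0\alpha$ gives
\[
\kappa|\nabla u|_2^2+\tfrac12\int_{\R^N}\widetilde V_\varepsilon u^2+\int_{\R^N}F_1(u)+\Phi_\varepsilon(u)\le C,
\]
which bounds $\|u\|_\varepsilon$, $\Phi_\varepsilon(u)$, $\int F_1(u)$, and (by reusing the G--N estimate) $\int F_{2,K}(u)$. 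For the moment bounds: Lemma \ref{F12} (iii) gives $\int f_1(u)u\le 2\int F_1(u)\le C$, while $f_{2,K}(u)u\le f_2(u)u\le C|u|^{2+4/N}$ together with G--N bounds $\int f_{2,K}(u)u$. Finally, $\xi_1(\Upsilon(u))\int \chi_u u^2\le 1+\Phi_\varepsilon(u)^{1/2}\le C$ from the definition of $\Phi_\varepsilon$.

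For the Lagrange multiplier bound, I test the hypothesis $\|\Gamma_{\varepsilon,K}'(u)-\lambda G'(u)\|_{H_\varepsilon^{-1}}\le 1$ against $v=u$ and use $G'(u)u=\alpha$ to get
\[
|\lambda|\alpha\le \|u\|_\varepsilon+|\Gamma_{\varepsilon,K}'(u)u|.
\]
Expanding $\Gamma_{\varepsilon,K}'(u)u=\|u\|_\varepsilon^2-\int f_K(u)u+\Phi_\varepsilon'(u)u+\Psi_\varepsilon'(u)u$, all four terms are already controlled: $\|u\|_\varepsilon$ by Step 2, $|\int f_K(u)u|\le \int f_1(u)u+\int f_{2,K}(u)u$ by the moment bounds just obtained, $|\Psi_\varepsilon'(u)u|\le Ce^{-c\varepsilon^{-1}}\|u\|_\varepsilon$ by Lemma \ref{cor2.3}, and $|\Phi_\varepsilon'(u)u|$ by Lemma \ref{Phi'} with $v=u$, which bounds it in terms of $\Phi_\varepsilon(u)^{1/2}$, $\xi_1(\Upsilon(u))\int\chi_u u^2$, $\|u\|_\varepsilon$, and $\alpha$, all of which have been estimated uniformly in $L,\varepsilon,K$.

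The only delicate point is ensuring that every constant is independent of $\varepsilon$, $L$, and $K$; this is exactly what is guaranteed by the uniform smallness of $\Psi_\varepsilon$ in Lemma \ref{cor2.3}, the monotone pointwise bound $F_{2,K}\le F_2$ (which kills the $K$-dependence), and the fact that the Gagliardo--Nirenberg argument only uses $\alpha<\alpha_N$. The main obstacle is the initial absorption step: without $\alpha<\alpha_N$ the critical Gagliardo--Nirenberg term could not be absorbed into $\tfrac12|\nabla u|_2^2$, and everything downstream would fail; once that is in place, the remaining estimates are routine applications of the structural properties of $F_1$, $f_{2,K}$, $\Phi_\varepsilon$, and $\Psi_\varepsilon$ already established.
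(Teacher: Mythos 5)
Your proof is correct, and the final bounds (on $\int f_1(u)u$, $\int F_{2,K}(u)$, $\xi_1(\Upsilon(u))\int\chi_u u^2$, and $\lambda$) track the paper's argument closely. However, you take a genuinely different route to the key \emph{a priori} bound on $\|u\|_\varepsilon$. The paper gets $\|u\|_\varepsilon\le C$ directly from the set membership $u\in Z(\rho_1/32,3\delta_0)$: by definition $u$ lies within $H_\varepsilon$-distance $\rho_1/32$ of $Z_{L,\varepsilon}$, whose elements $\sum_j(\phi_\varepsilon U_j)(\cdot-y_j)$ have uniformly bounded $\|\cdot\|_\varepsilon$-norm because $K_\alpha$ is compact and the cutoff $\phi_\varepsilon$ confines the support to the bounded region where $\widetilde V$ is bounded. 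That bound requires no energy hypothesis at all; the paper then uses the energy bound only to control $\int F_1(u)+\Phi_\varepsilon(u)$. You instead extract $\|u\|_\varepsilon\le C$ from the energy bound and the constraint $|u|_2^2=\alpha$ via a Gagliardo--Nirenberg absorption, which is the standard coercivity argument for $L^2$-subcritical normalized problems and exploits $\alpha<\alpha_N$. Your route is more self-contained (it would work for any $u\in\mathcal M_\alpha^\varepsilon$ with that energy bound and a well-defined $\Phi_\varepsilon$, not only $u$ near $Z_{L,\varepsilon}$), at the cost of an extra step that the paper's structural observation makes unnecessary; conversely, your argument does not reveal that the energy hypothesis is actually superfluous for the $\|u\|_\varepsilon$-bound, which is what the paper's ``Clearly'' is quietly recording.
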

\begin{proof}
  Clearly,
  $\|u\|_{\varepsilon }\leq C$ for some $C>0$ independent of  $L, \varepsilon, K$. Hence, by $|f_{2,K}(t)|\leq |f_2(t)|\leq C|t|^{1+\frac4N}$, we have
  $\|f_{2,K}(u)u\|_{L^1(\R^N)}+\|F_{2,K}(u)\|_{L^1(\R^N)}\leq C$.
  Then,   we have 
   \[
    \Phi_{ \varepsilon}(u) +\int_{\R^N}F_1(u)
    \leq \Gamma_{\varepsilon, K}(u)-\frac12 \|u\|_\e^2 +\int_{\R^N}F_{2,K}(u) -\Psi_\varepsilon(u)\leq C,
   \quad
   \xi_1(\Upsilon(u))\int_{\R^{N} }\chi_u  u^2 \leq  \Phi_{\e}(u)^\frac12+ 1 \leq C.
\]
By Lemma \ref{F12} (iii), there holds
\[\frac12\int_{\R^N}f_1(u)u\leq \int_{\R^N}F_1(u)  \leq C.\]
Thus the first conclusion follows.
To show $|\lambda|\leq C_0$, we see that
   \begin{gather*}
     |\Gamma_{\e,K}'(u)u|  \leq \|u\|_\e^2  + \int_{\R^N} |f_K(u)u|   +|\Phi_{\e}'(u) u|+|\Psi_\e'(u)u|\leq C, \notag\\
  |\lambda|\leq \alpha^{-1}(\|u\|_\e+  |\Gamma_{\e,K}'(u) u|)\leq C_0. 
\end{gather*}
which complete the proof.
\end{proof}

\begin{remark}\label{re3.12}
 Let $u\in H^1(\R^N)$ weakly solves  the following equation
\begin{equation*}
-\Delta|u|\leq f_{2}(|u|)+|\lambda u|\quad \text{in } B(x,1),
  \end{equation*}
 with $\|u\|\leq C_0$ and $|\lambda|\leq C_0$, where $x\in\R^N$ is arbitrary and $C_0$ is the constant in Lemma \ref{lem3.12}.
Then by the fact
$  f_2(t) \leq Ct^{1+\frac4N}$\quad for any $t\geq 0,$
and the subsolution estimates \cite{Trudinger1997}, it follows $\|u\|_{L^\infty(B(x,1/2))}\leq K_0$ for some constant $K_0>0$.
Making $K_0$ larger if necessary, then
\begin{equation}\label{e39}
2u_0\leq K_0.
\end{equation}
From now on, we fix $K=K_0$, and denote
$\Gamma_\e(u):=\Gamma_{\varepsilon,K_0}(u)$.
Moreover, we set 
$\bar f_2=f_{2,K_0}$,
$\bar F_2=F_{2,K_0}$, $\bar f=f_{K_0}$ and $\bar F=F_{K_0}$, and hence there always hold
$\bar f_2(t)\leq f_2(K_0)$ and $\bar f(t)\leq f(t)$ for $t\geq 0$.
\end{remark}

\subsection{A prior decay estimate}
The following lemma is useful to get a priori decay estimate.
 \begin{lemma}\label{lemma 2.9}
  Let $\theta>1$, $b\geq 0$, $R_1, R>0$ be such that $R>R_1+1$. 
  Assume $Q(r)$ is a nonincreasing function in $[R_1, R]$  satisfying
  \[
    Q(r)\leq \theta^{-1} Q(r-1) +b \quad \mbox{for } r\in [R_1+1, R].
  \]
  Then 
  \[
  Q(R)\leq \theta^{R_1+1}Q(R_1) e^{-R\ln\theta}+\frac{\theta b}{ \theta-1}.  
  \]
 \end{lemma}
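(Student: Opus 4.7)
The plan is to iterate the recursive inequality a finite number of times, bound the resulting geometric series, and then use the monotonicity of $Q$ to convert the discrete bound into one valid at $R$.

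First I would argue by induction on $k$ that, for any integer $k\geq 0$ with $R_1+k\leq R$,
\[
Q(R_1+k)\leq \theta^{-k}Q(R_1)+b\sum_{j=0}^{k-1}\theta^{-j}.
\]
The case $k=0$ is trivial, and the inductive step is a direct application of the hypothesis $Q(r)\leq \theta^{-1}Q(r-1)+b$ at $r=R_1+k+1$, using that this point lies in $[R_1+1,R]$ by assumption. The geometric sum is bounded above by $\sum_{j=0}^{\infty}\theta^{-j}=\theta/(\theta-1)$ since $\theta>1$, yielding
\[
Q(R_1+k)\leq \theta^{-k}Q(R_1)+\frac{\theta b}{\theta-1}.
\]

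Next I would pick the integer $k=\lfloor R-R_1\rfloor$, so that $R_1+k\leq R<R_1+k+1$, which forces $k>R-R_1-1$. By the assumed monotonicity of $Q$ on $[R_1,R]$, we have $Q(R)\leq Q(R_1+k)$. Combining these observations gives
\[
Q(R)\leq \theta^{-k}Q(R_1)+\frac{\theta b}{\theta-1}.
\]
Finally, since $k>R-R_1-1$ and $\theta>1$, the exponential factor is controlled by
\[
\theta^{-k}<\theta^{-(R-R_1-1)}=\theta^{R_1+1}\theta^{-R}=\theta^{R_1+1}e^{-R\ln\theta},
\]
and substituting this into the previous bound yields exactly the desired conclusion.

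There is essentially no obstacle here; this is a standard iteration/discrete Gr\"onwall-type argument. The only point worth double-checking is that the interval $[R_1+1,R]$ indeed contains $R_1+k+1$ for all $k$ used in the induction, which is guaranteed by the hypothesis $R>R_1+1$ together with the choice $k\leq \lfloor R-R_1\rfloor$. No additional regularity beyond monotonicity of $Q$ is required, and the constants $\theta^{R_1+1}$ and $\theta b/(\theta-1)$ both appear naturally from the iteration.
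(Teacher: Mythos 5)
Your proof is correct and takes essentially the same approach as the paper's: iterate the affine recursion an integer number of times, sum the geometric series, and use monotonicity of $Q$ to pass from the integer gridpoint to $R$. The paper compresses the bookkeeping by subtracting the fixed point $\frac{\theta b}{\theta-1}$ of the map $x\mapsto\theta^{-1}x+b$ and iterating $(Q(\cdot)-\frac{\theta b}{\theta-1})^+$, but this is just a tidier packaging of your explicit induction and geometric-sum bound.
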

 \begin{proof}
  By the assumptions, we can get the conclusion from 
  \[
  (Q(R)- \frac{\theta b}{ \theta-1})^+ \leq \theta^{-1} (Q(R-1)- \frac{\theta b}{ \theta-1})^+\leq \theta^{-\lfloor R-R_1\rfloor} (Q(R_1)- \frac{\theta b}{ \theta-1}). \qedhere
  \]
  \end{proof}
\begin{proposition}\label{lembound} There is $\rho_0\in (0, \rho_1/96)$ and  $L_1\geq 100 R_0$ such that the following statements hold for each  $L\geq L_1$ and $\e \in (0, \e_L)$.
 If  $u\in  Z (3\rho_0, 3\delta_0)$ and  $\lambda\in\R$  satisfy
 \begin{gather*}
  \Gamma_{\varepsilon}(u) \le (\ell+1) E_{(\ell+1)^{-1}\alpha}+\frac12V_0\alpha,\\
   \|\Gamma_{\e}' (u )-\lambda G'(u)\|_{H_\e^{-1}}\leq b_\e\quad \mbox{for some   $b_\e\geq 0$},
 \end{gather*}
   then there is $C,c>0$ independent of $\e, L, b_\e$ such that $|\lambda |\leq C(1+b_\e)$ 
   and
 for each $R\geq 8R_0$,  
 \[\int_{\R^N\setminus \cup_{j=1}^\ell B(\Upsilon_j(u), R)    }\left(|\nabla u|^2+  u^2  \right) \leq C(b_\e+e^{-cR}+e^{-\frac{c}{\varepsilon}}).\]  
\end{proposition}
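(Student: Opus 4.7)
The plan is to dispatch the two conclusions separately. For the bound on $\lambda$, I will test the hypothesis $\Gamma_\e'(u) - \lambda G'(u) = \epsilon$ (with $\|\epsilon\|_{H_\e^{-1}} \leq b_\e$) against $u$ itself. Lemma \ref{lem3.12} bounds $\|u\|_\e$, $\int \bar f(u)u$, and $\Phi_\e'(u)u$ by a constant independent of $\e$, $L$ and $K$; Lemma \ref{cor2.3} bounds $\Psi_\e'(u)u$ by $Ce^{-c/\e}$. Isolating the $\lambda\alpha = \lambda\int u^2$ term and using $\langle\epsilon,u\rangle \leq b_\e\|u\|_\e$, one concludes $|\lambda| \leq C(1+b_\e)$.

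For the exponential-tail estimate, set $Q(R) := \int_{\R^N\setminus\cup_j B(\Upsilon_j(u),R)}(|\nabla u|^2+u^2)$. The target is a one-step contraction $Q(R) \leq \theta^{-1}Q(R-1) + C(b_\e+e^{-c/\e})$ with $\theta>1$, from which Lemma \ref{lemma 2.9} (applied with $R_1=8R_0$ and $Q(R_1)\leq\|u\|_\e^2\leq C$) delivers the desired conclusion. I will derive this contraction by a Caccioppoli-type test: choose $\eta_R \in C_0^\infty(\R^N;[0,1])$ with $\eta_R \equiv 0$ on $\cup_j B(\Upsilon_j(u),R-1)$, $\eta_R \equiv 1$ outside $\cup_j B(\Upsilon_j(u),R)$ and $|\nabla\eta_R|\leq 2$, and test $\Gamma_\e'(u) - \lambda G'(u) = \epsilon$ against $\eta_R^2 u$. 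Since $R\geq 8R_0$, $\supp\eta_R$ is disjoint from every $B(\Upsilon_j(u),4R_0)$, so Lemma \ref{Phi'} gives $\Phi_\e'(u)(\eta_R^2 u) = 4\Phi_\e(u)^{1/2}\xi_1(\Upsilon(u))\int\chi_u\eta_R^2 u^2 \geq 0$, a favorable sign that can be dropped. Writing $\bar f = -f_1+\bar f_2$, absorbing the cross term $|2\int u\eta_R\nabla u\cdot\nabla\eta_R|$ into half of $\int\eta_R^2|\nabla u|^2$, and using $\widetilde V_\e \geq 1$, one reaches
\[
\tfrac12\int\eta_R^2|\nabla u|^2 + \int\eta_R^2 u^2 + \int f_1(u)\eta_R^2 u \leq \int\bar f_2(u)\eta_R^2 u + \lambda\int\eta_R^2 u^2 + 8\int_{\{R-1\leq\dist(\cdot,\cup_j\Upsilon_j(u))\leq R\}} u^2 + C(b_\e+e^{-c/\e}).
\]

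The crux is making the LHS coercive enough to dominate $\int\bar f_2(u)\eta_R^2 u + \lambda\int\eta_R^2 u^2$. I will establish pointwise smallness of $u$ on $\supp\eta_R$. For $|u|$, a Moser/subsolution estimate in the spirit of Remark \ref{re3.12} yields $\||u|\|_{L^\infty(B(x,1/2))} \leq C(\|u\|_{L^2(B(x,1))}+b_\e+e^{-c/\e})$ on each unit ball $B(x,1)$ with bounded-coefficient data. For $x\in\supp\eta_R$, the $L^2$ mass is controlled by writing $u = u^\ast + w$ with $u^\ast = \sum_j(\phi_\e U_j)(\cdot-y_j)\in Z_{L,\e}$, $\|w\|_\e\leq 3\rho_0$, $|\Upsilon_j(u)-y_j|\leq 2R_0$ (Lemma \ref{lem3.4}(i)) and $U_j$ Gaussian-decaying (Lemma \ref{compact}), which gives $\|u\|_{L^2(B(x,1))}\leq 3\rho_0 + Ce^{-c/\e}$. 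Choosing $\rho_0\in(0,\rho_1/96)$ small enough, assumption (F2) provides $f_1(t)/t\geq C_0+10$ for $t\in(0,\delta]$ where $\delta$ is the resulting pointwise bound and $C_0\geq|\lambda|$ comes from Step 1; hence $\int f_1(u)\eta_R^2 u \geq (|\lambda|+9)\int\eta_R^2 u^2$, while $\int\bar f_2(u)\eta_R^2 u\leq C\delta^{4/N}\int\eta_R^2 u^2 \leq \int\eta_R^2 u^2$. Combining everything, one arrives at $Q(R) \leq C_2(Q(R-1)-Q(R)) + C(b_\e+e^{-c/\e})$, which is precisely the claimed contraction after rearrangement into $Q(R)\leq (1+1/C_2)^{-1}Q(R-1) + C'(b_\e+e^{-c/\e})$.

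The principal obstacle is calibrating $\rho_0$: the $L^2$ distance of $u$ from its reference is only $O(\rho_0)$ rather than exponentially small, so $\rho_0$ must be fixed small at the outset---depending only on $\alpha$, $V_0$ and the behavior of $f$ at zero---so that after Moser the pointwise bound $\delta$ is small enough for (F2) to supply a Rayleigh gap $f_1(t)/t$ large enough to beat the (possibly growing) constant $|\lambda|\leq C(1+b_\e)$. This is exactly where the non-Lipschitz assumption $\lim_{s\to 0^+}f(s)/s = -\infty$ enters the argument decisively; a merely Lipschitz nonlinearity would only furnish a finite Rayleigh gap and could not absorb $\lambda$ close to $V_0$.
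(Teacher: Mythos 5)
The estimate $|\lambda|\leq C(1+b_\e)$ and the overall skeleton --- test the near-criticality against a cutoff times $u$ to obtain a one-step contraction for $Q(R)$, then iterate via Lemma \ref{lemma 2.9} --- follow the paper. The handling of the $\Phi_\e$ term via Lemma \ref{Phi'} and the absorption of the Caccioppoli cross-term are also correct and match the paper's mechanics.

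The gap is in the step where you claim a Moser/subsolution estimate of the form $\|u\|_{L^\infty(B(x,1/2))}\leq C(\|u\|_{L^2(B(x,1))}+b_\e+e^{-c/\e})$. Under the stated hypotheses, $u$ only satisfies $\Gamma_\e'(u)-\lambda G'(u)=\epsilon$ with $\|\epsilon\|_{H_\e^{-1}}\leq b_\e$: this is an equation with an $H^{-1}$-controlled \emph{distributional} error, not a subsolution inequality. De Giorgi--Nash--Moser requires the right-hand side to lie in $L^q$ with $q>N/2$, and an $H^{-1}$ bound does not imply this (e.g.\ for $N=3$, an $L^{6/5}$ source is $H^{-1}$ but can produce unbounded solutions). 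Remark \ref{re3.12} is formulated precisely for clean subsolution inequalities --- the paper invokes it only for exact critical points and for the limit profiles $W_j$, never for the near-critical $u$ of this proposition. So the claimed $L^\infty$ bound, on which your entire Rayleigh-gap computation ($\int f_1(u)\eta_R^2 u\geq(|\lambda|+9)\int\eta_R^2 u^2$) rests, is not available here. A secondary issue is circularity: the threshold $\delta$ below which (F2) gives $f_1(t)/t\geq |\lambda|+10$ would itself depend on $|\lambda|\leq C(1+b_\e)$, hence on $b_\e$, whereas you need constants fixed independently of $b_\e$.

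The paper avoids pointwise information entirely. It groups $f(u)u+\lambda u^2$ and uses the \emph{pointwise} inequality
\[
f(u)u+\lambda u^2\leq\Bigl(\frac{f(u)}{u}+C\Bigr)^{+}u^2+Cb_\e u^2\leq C|u|^{2+4/N}+Cb_\e u^2,
\]
which holds everywhere: for $|u|$ small the positive part vanishes by (F2), and for $|u|$ bounded away from $0$ the quantity $(f(u)/u+C)^{+}$ is $\leq C|u|^{4/N}$ by (F3). Plugged into the Caccioppoli inequality and using Gagliardo--Nirenberg, $\int\psi_r|u|^{2+4/N}\leq C_N\bigl(Q(r-1)\bigr)^{1+2/N}$, which is superlinear in $Q(r-1)\leq 10\rho_0^2$; choosing $\rho_0$ small makes this term absorbable into a fraction of $Q(r-1)$, giving $2Q(r)\leq(1+\theta)Q(r-1)+C(b_\e+e^{-c/\e})$ with $\theta<1$. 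This achieves the contraction with no regularity input beyond $H^1$, and sidesteps both the $H^{-1}$-error obstruction and the $b_\e$-dependent calibration.
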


\begin{proof}
By Lemma \ref{lem3.12},  we have
    \begin{gather}
     |\Gamma_{\e}'(u)u|  \leq \|u\|_\e^2  + \int_{\R^N} |\bar f(u)u|   +|\Phi_{\e}'(u) u|+|\Psi_\e'(u)u|\leq C, \notag\\
  |\lambda|\leq \alpha^{-1}(b_\e\|u\|_\e+  |\Gamma_{\e}'(u) u|)\leq C(1+b_\e).  \label{eqlamdasd}
\end{gather}

  First note that,   by Lemma \ref{lem3.4} and the compactness of $K_\alpha$, for each given $\rho_0\in (0,  \rho_1/96)$,  there is $R_1>4R_0$ such that 
  \begin{equation}\label{eq2.15}
    \sup_{u\in  Z (3\rho_0, 3\delta_0)}  \int_{\R^N\setminus  B(\Upsilon_j(u), R_1)    }  \left(|\nabla u|^2+  u^2  \right)  \leq  10 \rho_0^2.
  \end{equation}

We fix $L_1> R_1+1$ and consider $L\geq L_1$.
  For  $R\in[R_1+1, L]$ and $r \in [R_1+1, R] $, we take $\psi_r \in C^1(\mathbb R^N,[0,1])$
  such that $| \nabla \psi_r |\leq 2$ and
 \[
 \psi_r (x)=
 \left\{\begin{aligned}
 &0 & &{\rm if} & &x\in \cup_{j=1}^\ell B(\Upsilon_j (u),   r-1),\\
 &1 & &{\rm if} & &x\in \R^N\setminus \cup_{j=1}^\ell  B(\Upsilon_j (u),   r),
 \end{aligned}\right.
 \]
 Since $u\in Z (\rho_1/32, 3\delta_0)$, there is $C>0$ independent of $\e$, $L$, $r$ and $u$ such that
 \[
 \|\psi_r u\|_\varepsilon \leq C.
 \]
 We have 
\[
\begin{aligned}
  \Gamma_{\e}' (u )(\psi_r u)-\lambda \int_{\R^N} \psi_r u^2\leq b_\e\|\psi_r u\|_\varepsilon\leq C b_\e.
\end{aligned}  
\]
 By Lemma \ref{Phi'} and $\supp (\psi_r u) \subset  \R^N\setminus \cup_{j=1}^\ell B(\Upsilon_j(u), 4R_0)$, we have 
 \[
 \Phi_\e'(u)(\psi_r u)=4\Phi_{\varepsilon}(u)^{\frac12} \xi_1(\Upsilon(u))\int_{\R^{N} }\chi_u \psi_ru^2\geq 0. 
 \]
  Together with Lemma \ref{cor2.3}, 
 we have
 \begin{equation}\label{eq4.2}\begin{aligned}
   C b_\e\geq& 
   \int_{\R^N}\psi_r (|\nabla u|^2+ \widetilde{V}_\varepsilon u^2-\bar f(u)u -\lambda u^2)  
   +\int_{\R^N} u\nabla \psi_r \nabla u  + O(e^{-\frac{c}\varepsilon})
 \\
   \geq & \int_{\R^N}\psi_r (|\nabla u|^2+   u^2 -f(u)u -\lambda u^2) 
    -  \int_{\supp |\nabla \psi_r|} ( |\nabla u|^2 +  u^2) + O(e^{-\frac{c}\varepsilon}).
 \end{aligned}
 \end{equation}
 By \eqref{eqlamdasd} and (F2),
 \[
  f(u)  +\lambda u^2\leq (\frac{f(u)}{u} +C)^+u^2+Cb_\e u^2\leq C|u|^{2+\frac{4}{N}}+Cb_\e u^2.
 \]
Setting 
  \[
  Q(r)=\int_{\R^N\setminus \cup_{j=1}^\ell B(\Upsilon_j(u), r)} |\nabla u|^2+    u^2,
  \]
  we conclude from \eqref{eq4.2} and  the Sobolev inequality that
 \[\begin{aligned}
  C(b_\e + e^{-\frac{c}{\e}})\geq& 2Q(r)   - Q(r-1) -
  C \int_{\mathbb R^N}\psi_r |u|^{2+\frac{4}{N}}\\
    \geq &  2Q(r)   - Q(r-1) -C_NC (Q(r-1))^{2+\frac{4}{N}},
 \end{aligned}
 \]
 where $ C_{ N}>0$ is a constant  depending only on   $N$.  By \eqref{eq2.15}, $Q(r-1)\leq \sqrt{10} \rho_0$.
 Taking $\rho_0>0$ small such that
 $C_NC(\sqrt{10}\rho_0)^{ 1+4/N}<1$, we   complete the proof by Lemma \ref{lemma 2.9}.
\end{proof}
A direct corollary is that, when $L\geq L_1$   and $\e$ is sufficiently small,
 $\Phi_{ \e}(u_\e)$ disappear  for a critical point $u_\e$ of $\Gamma_{\varepsilon}$ on $\mathcal M_\alpha^\e$.
 
 In what follows, we denote by $\Gamma_{\varepsilon}|_{\mathcal M_\alpha^\e}'(u)$ the 
 derivative of $\Gamma_{\varepsilon}$ restricted on $\mathcal M_\alpha^\e$ at $u$.
 We denote by $T_u\mathcal M_\alpha^\e:=\set{v\in H_\e | \int_{\R^N}vu=0}$ the tangent space of $\mathcal M_\alpha^\e$ at $u\in \mathcal M_\alpha^\e$.
 We also denote by $\|\cdot\|_*$ the norm of the cotangent space. Note that
 \[
 \| \Gamma_{\varepsilon}|_{\mathcal M_\alpha^\e}'(u)\|_*=\inf_{\lambda\in\R}
 \|\Gamma_{\varepsilon}'(u)-\lambda G'(u)\|_{H_\e^{-1}}.
 \]
\begin{corollary}\label{lem2.8}
  For 
   $  u_\e\in  Z (3\rho_0, 3\delta_0)$  with $ \limsup_{\varepsilon\to0} \Gamma_{\varepsilon}\left(u_{\e}\right) \le  \ell E_{(\ell+1)^{-1}\alpha}+\frac12V_0\alpha$,     
    if\[\xi_1(\Upsilon(u_\e))\|\Gamma_{\varepsilon}|_{\mathcal M_\alpha^\e}^{\prime}\left(u_\e\right)\|_*  \to 0\  \mbox{ as $\e\to 0$,}\]
     then 
  $\Phi_{ \e}(u_\e)=0$ and $\Phi_{\e}'(u_\e)=0$
     for       $L\geq L_1$ and  small $\e$.
\end{corollary}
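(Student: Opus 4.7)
The plan is to apply the a priori decay estimate of Proposition \ref{lembound} with a judicious choice of radius, and then use the explicit form $\Phi_\e(u) = (\xi_1(\Upsilon(u))\int\chi_u u^2 - 1)_+^2$. If we can show that $\xi_1(\Upsilon(u_\e))\int_{\R^N}\chi_{u_\e}u_\e^2 < 1$ for small $\e$, then $\Phi_\e(u_\e) = 0$ automatically, and since $\Phi_\e$ vanishes on an open $H_\e$-neighborhood of $u_\e$, we also get $\Phi_\e'(u_\e) = 0$. So the whole proof reduces to establishing this strict inequality.

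Setting $b_\e := \|\Gamma_\e|_{\mathcal M_\alpha^\e}'(u_\e)\|_*$ and picking a Lagrange multiplier $\lambda_\e$ realizing this norm up to arbitrarily small error, the energy bound in the hypothesis ensures that Proposition \ref{lembound} applies to $u_\e$ for all sufficiently small $\e$. I would then take the exterior radius in that proposition to be $R = \xi_1(\Upsilon(u_\e))/10$. Since $u_\e \in Z(3\rho_0,3\delta_0)$, Lemma \ref{lem3.4}(i) gives $\xi(\Upsilon(u_\e)) \geq L - 4R_0$, and by the construction of $\xi_1$ we have $\xi_1(\Upsilon(u_\e)) \geq L - 4R_0 - 1 > 80R_0$ once $L_1$ is taken large enough, so the radius is admissible. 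Because $\chi_{u_\e}$ is supported outside $\bigcup_j B(\Upsilon_j(u_\e), \xi_1(\Upsilon(u_\e))/10)$, Proposition \ref{lembound} yields
\[
\xi_1(\Upsilon(u_\e))\int_{\R^N}\chi_{u_\e}u_\e^2 \leq C\,\xi_1(\Upsilon(u_\e))\left(b_\e + e^{-c\xi_1(\Upsilon(u_\e))/10} + e^{-c/\e}\right).
\]

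It remains to see that the right-hand side is smaller than $1$ for small $\e$. The first term $\xi_1(\Upsilon(u_\e))\,b_\e \to 0$ directly by hypothesis. The third satisfies $\xi_1(\Upsilon(u_\e))\,e^{-c/\e} \leq (\e^{-3/4}+1)e^{-c/\e} \to 0$, since by construction $\xi_1 \leq \e^{-3/4}+1$. The middle term $\xi_1 e^{-c\xi_1/10}$ is the subtle one and dictates the choice of $L_1$: regarding $t \mapsto t e^{-ct/10}$ on the interval $t \in [L - 4R_0 - 1,\,\e^{-3/4}+1]$, it is monotone decreasing as soon as $L_1 > 10/c + 4R_0 + 1$, so its value is majorized by $(L - 4R_0 - 1)e^{-c(L - 4R_0 - 1)/10}$, which tends to $0$ as $L \to \infty$. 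Taking $L_1$ large enough, the three contributions are together less than $1$ for all sufficiently small $\e$, which finishes the argument. \emph{The main obstacle} is controlling the middle exponential uniformly in $\e$, and this is what forces us to enlarge $L_1$ beyond the value produced by Proposition \ref{lembound} if necessary; the other two contributions are immediate from the hypothesis and from the upper bound on $\xi_1$.
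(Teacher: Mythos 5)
Your proof is correct and is essentially the intended argument: the paper treats this as a ``direct corollary'' of Proposition~\ref{lembound} without writing out details, and your derivation supplies exactly those details (choose $R=\xi_1(\Upsilon(u_\e))/10$, note $\supp\chi_{u_\e}$ lies outside $\bigcup_j B(\Upsilon_j(u_\e),\xi_1(\Upsilon(u_\e))/10)$, and control the three resulting terms using the hypothesis $\xi_1 b_\e\to0$, the lower bound $\xi_1\geq L-4R_0-1$ for the exponential term, and the upper bound $\xi_1\leq\e^{-3/4}+1$ for the $e^{-c/\e}$ term). You also correctly observed that $\Phi_\e(u_\e)=0$ on an open neighborhood, hence $\Phi_\e'(u_\e)=0$, which is the point one might overlook. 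The only thing worth flagging is that the conclusion holds ``for $L\geq L_1$'' only after enlarging $L_1$ so that $(L-4R_0-1)e^{-c(L-4R_0-1)/10}<1/3$; this is consistent with the paper's running convention of increasing $L_1$ as needed (e.g.\ in the proof of Proposition~\ref{pro2.6}).
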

By the compact embedding from $H_\varepsilon$ to $L^q(\R^N)$ for $q\in (\frac{2N}{N+2}, 2^*)$ (\cite[Lemma 2.3]{zhang1}), it is standard to show the Palais--Smale condition for fixed $\e$, i.e.,
\begin{proposition}\label{pro2.5}
For    $L\geq L_1$ , if $\{u_n\}\subset  Z (3\rho_0, 3\delta_0)$ is such that $\lim_{n\to\infty}\Gamma_{\varepsilon}(u_n) \le  \ell E_{(\ell+1)^{-1}\alpha}+\frac12V_0\alpha$    and  $\|\Gamma_{\e}|_{\mathcal M^\e_\alpha}'(u_n)\|_{*}\to 0$ as $n\to+\infty$,
then $\{u_n\}$ has a convergent subsequence.
\end{proposition}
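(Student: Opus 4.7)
The plan is to apply the standard Palais--Smale scheme, with the compact embedding $H_\varepsilon \hookrightarrow L^q(\R^N)$ for $q \in (\tfrac{2N}{N+2}, 2^*)$ cited from \cite{zhang1} as the key input (valid at fixed $\varepsilon$ because $\widetilde V(\varepsilon x)\to\infty$ at infinity). First I would extract bounded Lagrange multipliers: from $\|\Gamma_\varepsilon|_{\mathcal M_\alpha^\e}'(u_n)\|_* \to 0$ pick $\lambda_n\in\R$ with $\|\Gamma_\varepsilon'(u_n)-\lambda_n G'(u_n)\|_{H_\varepsilon^{-1}} \to 0$. Lemma \ref{lem3.12} then gives uniform bounds on $\|u_n\|_\varepsilon$, $|\lambda_n|$, $\Phi_\varepsilon(u_n)$, $\int F_1(u_n)$, $\int \bar F_2(u_n)$, and $\int f_i(u_n)u_n$ for $i=1,2$. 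Along a subsequence, $u_n\rightharpoonup u_*$ in $H_\varepsilon$, $\lambda_n\to\lambda_*$, and the compact embedding yields $u_n\to u_*$ strongly in every admissible $L^q$ (in particular $q=2$ and $q=2+4/N$) and pointwise a.e.

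I would then pass to the limit in the approximate equation $\Gamma_\varepsilon'(u_n)=\lambda_n G'(u_n)+o_n(1)$ in $H_\varepsilon^{-1}$ to identify $u_*$ as a weak solution of $\Gamma_\varepsilon'(u_*)=\lambda_* G'(u_*)$. The quadratic and multiplier terms are handled by weak convergence and $L^2$ compactness. The nonlinearity splits as $\bar f=-f_1+\bar f_2$: the truncated piece $\bar f_2(u_n)$ is bounded by $\min\{f_2(K_0),C|u_n|^{1+4/N}\}$, so a.e.\ convergence and strong $L^{2+4/N}$ convergence handle it; the dissipative piece $f_1(u_n)$ is controlled using $f_1(s)s\leq 2F_1(s)$ (Lemma \ref{F12}(iii)), the uniform bound on $\int F_1(u_n)$ coming from Lemma \ref{lem3.12}, and the generalised Brezis--Lieb lemma (Lemma \ref{Brezislieb}), together with the exponential tail decay of Proposition \ref{lembound} (which applies since $b_n:=\|\Gamma_\varepsilon'(u_n)-\lambda_n G'(u_n)\|_{H_\varepsilon^{-1}}=o_n(1)$). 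The penalisations are harmless: $\Phi_\varepsilon'$ has the explicit form of Lemma \ref{Phi'} and depends continuously on $u$ through $\Upsilon$ and $\xi_1$, while $\Psi_\varepsilon'$ is exponentially small by Lemma \ref{cor2.3}.

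To upgrade to strong convergence, I would test $\Gamma_\varepsilon'(u_n)-\lambda_n G'(u_n)$ against $u_n-u_*$ and subtract the analogous identity for $(u_*,\lambda_*)$. Weak convergence in $H_\varepsilon$ reduces the leading contribution to $\|u_n-u_*\|_\varepsilon^2+o_n(1)$; the multiplier term vanishes by $L^2$ convergence, the penalisation terms vanish as above, and the nonlinear terms vanish by the same split, yielding $\|u_n-u_*\|_\varepsilon\to 0$. The main obstacle throughout is the contribution of $f_1$, because $f_1(s)/s\to+\infty$ at $s=0$ makes the associated Nemytskii operator discontinuous on $L^2$ and defeats direct dominated convergence. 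The remedy, and what makes the argument ``standard'' in the authors' sense, is the combination of the monotonicity estimate $f_1(s)s\leq 2F_1(s)$, the uniform bound on $\int F_1(u_n)$ from Lemma \ref{lem3.12}, the exponential tail decay of Proposition \ref{lembound}, and the generalised Brezis--Lieb lemma; together these supply the uniform integrability needed to pass $f_1(u_n)$ to the limit against $(u_n-u_*)$ and against arbitrary test functions.
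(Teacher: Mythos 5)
Your proposal follows the standard Palais--Smale scheme and is compatible with the paper's one-line justification (the paper offers no details beyond citing the compact embedding). The structural skeleton --- extract bounded multipliers via Lemma~\ref{lem3.12}, take the weak limit, use compactness of $H_\varepsilon\hookrightarrow L^q$ for $q\in(\tfrac{2N}{N+2},2^*)$, identify $u_*$, then test against $u_n-u_*$ --- is the right one.

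However, the place where you are vague is precisely the place that requires care: controlling $\int f_1(u_n)(u_n-u_*)$. The tools you invoke (the inequality $f_1(s)s\leq 2F_1(s)$, the uniform $L^1$ bound on $F_1(u_n)$, Lemma~\ref{Brezislieb}, and tail decay) give boundedness of $\int f_1(u_n)u_n$ but do not by themselves pass the term to the limit, and $f_1=f^-$ is \emph{not} monotone on $(0,\infty)$, so the difference $\int(f_1(u_n)-f_1(u_*))(u_n-u_*)$ has no sign. Two clean fixes are available and both are already in the paper's toolbox. One is to note that $f_1(s)s\geq 0$ and use Fatou's lemma to get $\liminf_n\int f_1(u_n)u_n\geq\int f_1(u_*)u_*$, while the cross term $\int f_1(u_n)u_*\to\int f_1(u_*)u_*$ follows from (F5) (which gives $|f_1(s)|\lesssim s^{1-\epsilon}+s$ near $0$) together with the compact embedding into $L^q$ for $q$ slightly \emph{below} $2$ --- the range $(\tfrac{2N}{N+2},2^*)$ contains such $q$, which is exactly why the paper cites that range. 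The other is to replace the split $\bar f=-f_1+\bar f_2$ by the split $\bar f=-\tilde f_1+\tilde f_2$ used in Lemma~\ref{lemma42}, where $\tilde f_1$ \emph{is} nondecreasing (so that $\int(\tilde f_1(u_n)-\tilde f_1(u_*))(u_n-u_*)\geq 0$ and can simply be dropped) and $|\tilde f_2(t)|\leq Ct$ makes the remaining term go to zero by $L^2$ compactness. Either route closes the argument; as written, your combination of Brezis--Lieb plus tail decay does not quite supply the sign or the pointwise domination needed.
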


We can also show the following $\e$-dependent concentration compactness   result.
\begin{proposition}\label{pro2.6}
   For $L\geq L_1$, suppose
   $\varepsilon_n\to0, u_n\in  Z (3\rho_0, 3\delta_0)$   satisfying
\begin{equation}\label{13}
  \limsup_{n\to\infty}\Gamma_{\varepsilon_{n}} (u_{n} ) \leq \ell E_{\ell^{-1}\alpha}+\frac12V_0\alpha,\quad
  \lim_{n\to\infty} \|\Gamma_{\varepsilon_{n}}|_{\mathcal M_\alpha^\e}^{\prime} (u_{n} ) \|_{*}  = 0.
\end{equation}
Then there exist
 $\boldsymbol{U}\in K_{\alpha}$ and 
 $\left(z_{n,j} \right) \subset \R^N, j= 1,2, \cdots, \ell$ such that as $n \to \infty$ (after extracting a subsequence if necessary)
 \begin{enumerate}
      \item $ | z_{n,j}-\Upsilon_j(u_n)|\leq 2  R_0$ for $j=1,2, \cdots, \ell$,
\item $|z_{n,i}-z_{n,j}| \to \infty \ $ for $1 \leq i  < j \leq \ell$,
\item $\|u_{n}- \sum_{j=1}^\ell (\phi_{\e_n} U_j) (\cdot-z_{n,j} )\|_{ {\varepsilon_n}}\to 0$, 
where $U_j$ is the $j$-th component of $\boldsymbol{U}$.
 \end{enumerate}
\end{proposition}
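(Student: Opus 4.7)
The plan is to run a concentration-compactness argument on the translated sequences $u_n(\cdot+\Upsilon_j(u_n))$, using the PS-type hypothesis to produce $\ell$ limiting profiles that solve the autonomous limit equation with effective potentials $\mu_j\in[\mu_0,V_0]$, and then upgrade the extraction to strong convergence via the strict subadditivity of $E_\alpha$ in Lemma \ref{Lemma:2.2}.

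\textbf{Setup and profile extraction.} Since $u_n\in Z(3\rho_0,3\delta_0)$, Lemma \ref{lem3.12} gives $\|u_n\|_{\varepsilon_n}\leq C$, and writing $\lambda_n$ for almost-realizers of $\|\Gamma_{\varepsilon_n}'(u_n)-\lambda G'(u_n)\|_{H_{\varepsilon_n}^{-1}}\to 0$, also $|\lambda_n|\leq C$. Along a subsequence $\lambda_n\to\lambda$, $\varepsilon_n\Upsilon_j(u_n)\to x_j^\ast\in\overline{O^{3\delta_0}}$ with $\mu_j:=V(x_j^\ast)\in[\mu_0,V_0]$; Corollary \ref{lem2.8} moreover gives $\Phi_{\varepsilon_n}(u_n)=0=\Phi'_{\varepsilon_n}(u_n)$ for large $n$, so the penalization is inactive. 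Set $z_{n,j}:=\Upsilon_j(u_n)$, which makes (i) automatic, and $v_n^j(x):=u_n(x+z_{n,j})$. Along a common subsequence $v_n^j\rightharpoonup V_j$ weakly in $H^1$, strongly in $L^p_{\text{loc}}$ for $p\in[2,2^*)$, and a.e.; the lower bound $\int_{B(0,R_0)}(v_n^j)^2\geq\tfrac12\rho_1^2$ from Lemma \ref{lemma3.4} passes to the limit, forcing $V_j\not\equiv 0$. Testing the defect against $\varphi(\cdot-z_{n,j})$ for $\varphi\in C_c^\infty(\R^N)$, and combining with Lemma \ref{cor2.3} and the uniform convergence $V(\varepsilon_n(\cdot+z_{n,j}))\to\mu_j$ on compacta, yields
\[
-\Delta V_j+\mu_jV_j=f(V_j)+\lambda V_j\quad\text{in }\R^N,\quad j=1,\dots,\ell.
\]

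\textbf{Separation.} I next claim $|z_{n,i}-z_{n,j}|\to\infty$ for $i\neq j$. If instead $z_{n,i}-z_{n,j}\to a\in\R^N$ along a subsequence, uniqueness of weak limits forces $V_i=V_j(\cdot+a)$, while $v_n^j$ carries two mass lumps of size $\geq\tfrac12\rho_1^2$ at $0$ and at $-a$. Collecting all indices $k$ with $|z_{n,k}-z_{n,j}|$ bounded into a single cluster yields a collapsed configuration with strictly fewer than $\ell$ distinct profiles. Strict subadditivity \eqref{eq25}, coupled with the exponential interaction estimate of Proposition \ref{prop35}, then produces an energy strictly smaller than $\ell E_{\ell^{-1}\alpha}+\tfrac12 V_0\alpha$ for such a collapsed configuration; combined with the hypothesized upper bound $\limsup\Gamma_{\varepsilon_n}(u_n)\leq\ell E_{\ell^{-1}\alpha}+\tfrac12 V_0\alpha$ and the matching lower bound coming from the mass lumps plus the energy decomposition below, this contradiction forces pairwise separation to infinity.

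\textbf{Strong convergence and identification.} With separation in hand, the iterated Brezis--Lieb lemma (Lemma \ref{Brezislieb}) combined with the tail decay in Proposition \ref{lembound} (with $b_{\varepsilon_n}=o(1)$) yields
\[
\alpha=\sum_{j=1}^\ell|V_j|_2^2+|r_n|_2^2+o(1),\quad r_n:=u_n-\sum_{j=1}^\ell V_j(\cdot-z_{n,j}),
\]
and an analogous decomposition of $\Gamma_{\varepsilon_n}(u_n)$. The bound $J(V_j)+\tfrac12\mu_j|V_j|_2^2\geq E_{|V_j|_2^2}+\tfrac12\mu_0|V_j|_2^2$, together with the energy upper bound and Lemma \ref{Lemma:2.2}, forces $\sum|V_j|_2^2=\alpha$, whence $|r_n|_2\to 0$ and, through the $H^1$-seminorm part of the decomposition, $\|r_n\|_{\varepsilon_n}\to 0$. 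After a further bounded shift in $z_{n,j}$ normalizing $\max V_j$ to the origin and replacing $u_n$ by $|u_n|$ for positivity, each $V_j$ solves \eqref{psa} with $\boldsymbol{\mu}\in[\mu_0,V_0]^\ell$ and $\mathbb{J}_{\boldsymbol{\mu}}(\boldsymbol{V})\leq\ell E_{\ell^{-1}\alpha}+\tfrac12 V_0\alpha$, so $\boldsymbol{U}:=\boldsymbol{V}\in K_\alpha$; reinserting $\phi_{\varepsilon_n}$ in (iii) costs only $e^{-c/\varepsilon_n}$ by the Gaussian decay in Lemma \ref{compact}. The principal obstacle is the separation step: the decomposition built into $Z(3\rho_0,3\delta_0)$ guarantees only pairwise distance $\geq L-4R_0$ for fixed $L$, so the infinite separation must be extracted from the interplay between the autonomous limit equation, strict subadditivity, and Proposition \ref{prop35}.
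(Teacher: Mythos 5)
Your setup (translating by $\Upsilon_j(u_n)$, extracting weak limits $V_j\not\equiv 0$ that solve the autonomous limit equation, using Corollary \ref{lem2.8} to kill the penalization) matches the paper's. But the separation step — the crux, as you yourself flag — is handled very differently, and your version has a genuine gap. The paper does not argue separation by energy subadditivity plus Proposition \ref{prop35}. That route is problematic: Proposition \ref{prop35} concerns sums of translated elements of $K_\alpha$, not arbitrary weak limits, and the energy decomposition you would need to reach a contradiction is exactly the one you are trying to establish afterward, so the argument is circular as written. Moreover, a multi-lump nonnegative $H^1$-solution of the autonomous equation is not \emph{a priori} forbidden by energy comparison alone.

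What is actually missing is the positivity-and-radial-symmetry mechanism. The paper writes $u_n=\sum_j(\phi_{\varepsilon_n}\tilde U_j)(\cdot-y_n^j)+w_n$ with $\|w_n\|_{\varepsilon_n}\le 3\rho_0$, so the negative part of the limit satisfies $|W_j^-|_2\le 3\rho_0<\rho_1/16$; since $W_j^-$ obeys a subsolution inequality $-\Delta W_j^-\le f(W_j^-)+D_1 W_j^-$, Lemma \ref{L2} and the choice of $\rho_1$ force $W_j^-=0$. Hence $W_j>0$, and by Lemma \ref{decay} each $W_j$ is radially symmetric about a single point. That is what rules out two bounded-distance peaks: if $|y_n^i-y_n^1|$ stayed bounded, $W_1$ would have to carry two disjoint mass lumps of size $\ge\tfrac12\rho_1$ while being radially symmetric and small outside finitely many $R_0$-balls, an immediate contradiction. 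Your proposal never establishes $V_j>0$ (you defer positivity to a cosmetic "replace $u_n$ by $|u_n|$" at the end, which does not retroactively fix the limits), and therefore has no access to the symmetry argument that actually closes the separation step. The Brezis--Lieb / tail-decay route you sketch for strong convergence in $H_{\varepsilon_n}$ is also more delicate than you indicate (the paper's Step 3 requires a careful splitting of $\int(\bar f(u_n)/u_n+\lambda_n)u_nv_n$ using (F5) and the Gaussian decay), but that is repairable; the separation gap is the structural problem.
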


\begin{proof}
 Let  $\varepsilon_n, u_n$
  satisfy 
\eqref{13}. 
By the compactness of $K_{\alpha}$, we can write
\begin{equation}\label{equa 4.1}
u_n =\sum_{j=1}^{\ell}(\phi_{\e_n}\tilde U_j)  (\cdot-y_n^j)+w_n,\quad \|w_n\|_{ \varepsilon_n}\leq 3\rho_0,\quad \varepsilon_n\Upsilon_j(u_n)\in O^{3\delta_0}, 
\quad  \xi(y_n^1,\cdots,y_n^\ell)\geq L,
\end{equation}
 where $(\tilde U_1,\cdots, \tilde U_\ell) \in K_{\alpha}$. By Lemma \ref{lem3.4} (i), 
  $\dist(\varepsilon_n y_n^j, O^{3\delta_0}) \leq 2R_0\varepsilon_n\to 0.$ 
The second equation in \eqref{13} implies that there is $\lambda_n\in \R$ such that
 \begin{equation}\label{35'}
   \|\Gamma_{\e_n}'(u_n)-\lambda_n G'(u_n)\|_{H^{-1}_\e}\to 0.
 \end{equation}
Hence, by Lemma \ref{lem3.12} and Proposition \ref{lembound}, for constant $C_0>0$ in Lemma \ref{lem3.12} and some $C, c>0$ independent of   $L, n$, there hold 
\begin{gather}
  \|u_n\|_{\varepsilon_n},\ \int_{\R^N} f_1(u_n)u_n ,\ \int_{\R^N}F_1(u_n),\ \int_{\R^N} \bar f_2(u_n)u_n\ \int_{\R^N}\bar F_2(u_n), \ |\lambda_n |\leq C_0,\label{e45}\\
   \int_{\R^N\setminus \cup_{j=1}^\ell B(\Upsilon_j(u_n),\frac{1}{10}\xi_1( \Upsilon(u_n))) } \left(|\nabla u_n|^2+  u_n^2 \right) \mathrm dx\leq C e^{-c\xi_1(\Upsilon(u_n))}+o_n(1).\label{eq39}
\end{gather}
By \eqref{eq39} and $\xi_1(\Upsilon(u_n)) \geq \xi(\Upsilon(u_n))-1 \geq L-4R_0-1$, we can assume $L_1$ is so large   that
\begin{equation}\label{eq40}
  \xi_1(\Upsilon(u_n)) \int_{\R^{N} }\chi_{u_n} u_n^2  \leq CLe^{-cL} +o_n(1) \xi_1(\Upsilon(u_n))\leq \frac12 +o_n(1) \xi_1(\Upsilon(u_n)).
\end{equation}
Up to a subsequence, we assume for $j=1,\cdots,\ell$, $\lambda_n\to \lambda_0$,
$\varepsilon_n y_n^j\to y^j\in \overline{O^{3\delta_0}}$ and $ u_n(\cdot+y_n^j)\rightharpoonup W_j\neq 0$ in $H^1(\R^N)$.
 Note that by \eqref{eq40}, if $\xi(\Upsilon(u_n))$ is bounded, then $\Phi_{ \e_n}'(u_n)=0$ for every large $n$.
 So 
in either case that $\xi(\Upsilon(u_n))$ is bounded or $\xi(\Upsilon(u_n))\to+\infty$, we can verify that 
$W_j$ satisfies
$-\Delta u=\bar f(u)+(\lambda_0-V(y^j)) u\ \text{in}\  \R^N.
$ Applying Kato's lemma, we deduce that $|W_j|$ satisfies $$-\Delta v\leq -f_1(v)+\bar f_2(v)+(\lambda_0-V(y^j)) v\leq  f_2(v)+\lambda_0v.$$ 
By this, \eqref{e45} and Remark \ref{re3.12}, we get $|W_j|\leq K_0$, and hence $\bar f(W_j)=f(W_j)$. Thus $W_j$ satisfies
$$
-\Delta u= f(u)+(\lambda_0-V(y^j)) u\quad \text{in}\  \R^N.
$$

\noindent\textbf{Step 1.} We  show that $\xi(\Upsilon(u_n))\to+\infty$  as $n\to+\infty$.

Since \[\sum_{j=1}^\ell|W_j|_2^2\geq \liminf_{n\to +\infty}\|u_n\|^2_{L^2(\cup_{j=1}^\ell B(  y_n^j, 4R_0))}\geq \sum_{j=1}^\ell(\|\tilde U_j\|_{L^2(B(0, 4R_0))}-3\rho_0)^2\geq \frac12\alpha,\]
 we obtain that by Lemma \ref{lem1bound}, 
 $|\lambda_0|\leq D_1$. Hence,  $W_j^-$ satisfies  
\[
-\Delta W_j^- \leq f(W_j^-) +D_1   W_j^-.
\]
Then it follows from $|W_j^-|_{L^2}\leq\limsup_{n\to+\infty} \|w_n\|_{\e_n} \leq 3\rho_0<\rho_1/16$, Lemma \ref{L2} and the definition of $\rho_1$ in \eqref{rho1}, that 
 $W_j^-=0$. Hence, by Lemma \ref{decay}, $W_j$ is  positive and radially symmetric about some point.

Up to a subsequence, we may assume that the index set 
$\{1,\cdots,\ell_1\}$ with $\ell_1\geq 2$ satisfies $\lim_{n\to\infty}|y_n^i-y_n^j|<+\infty$ for $1\leq i<j\leq \ell_1$ and 
$\lim_{n\to\infty}|y_n^i-y_n^k|=+\infty$ for $1\leq i\leq \ell_1$ and $k\geq \ell_1+1$.
 Assume $y_n^j-y_n^1\to z_j\in \R^N$ for $j=2,\cdots,\ell_1$.
Then we have
\[
  \|W_1\|_{L^2(B(0, R_0))}\geq \liminf_{n\to\infty}\|u_n\|_{L^2(B(y_n^1, R_0))}  \geq \|\tilde U_1\|_{L^2(B(0, R_0))}-\sum_{j=2}^\ell \|\tilde U_j\|_{L^2(\R^N\setminus B(0, R_0))}-3\rho_0>  \frac{\rho_1}2.
\]
 Similarly, \[
  \|W_1\|_{L^2(B(z_j, R_0))}>\frac{\rho_1}2,\ \ j=2,\cdots,\ell_1.
 \]
 Setting $z_1=0$, by \eqref{equa 3.10}
 \[
  \|W_1\|_{L^2(\R^N\setminus(\cup_{j=1}^{\ell_1} B(z_j, R_0))}
  \leq\sum_{j=1}^{\ell_1}\|\tilde U_j\|_{L^2(\R^N\setminus  B(z_j, R_0) )}+3\rho_0
  \leq \frac{\ell_1\rho_1}{8\ell}+3\rho_0<\frac{\rho_1}{4}.
 \]
 Then $W_1$ can not be radially symmetric about any point,
which is a contradiction.

\noindent\textbf{Step 2.}
Setting $v_n:=u_{n}- \sum_{j=1}^\ell (\phi_{\e_n} W_j) (\cdot-y_{n}^{j} )$, we show $|v_n|_p\to0$ for $p\in(2,2^*)$.
 
Otherwise, by Lions' Lemma,
there is $y_n$ such that $|y_n-y_n^j|\to\infty$ for each $j=1,\cdots, \ell$
and 
 \begin{equation}\label{eq49}
  \limsup_{n\to\infty}\|u_n(\cdot+y_n)\|_{L^2(B(0,1))}>0.
\end{equation}
By Lemma \ref{Phi'}, \eqref{eq39}, and $\xi_1(\Upsilon(u_n))\to +\infty$, there holds
\[
  \Phi_{\varepsilon_n}'(u_n)v-4\Phi_{\varepsilon_n}(u_n)^{\frac12} \xi_1(\Upsilon(u_n))\int_{\R^{N} }\chi_{u_n} u_nv =o_n(1)\|v\|_{\e_n},\ \ v\in H_{\varepsilon_n}.
  \]
Set
\[R_n:=\frac12\min_{1\leq j\leq \ell}\{|y_n-y_n^j|\},\]
and let   $\eta_n\in C^\infty_0(\R^N,[0,1])$  be such that  $\eta_n=1$ in $B(y_n,1)$, $\eta_n=0$ in $\R^N\setminus B(y_n,R_n)$ and $|\nabla \eta_n|\leq 2/R_n$.
We have 
$$\begin{aligned}
  o_n(1)=&\Gamma_{\varepsilon_n}'(u_n)(\eta^2_n u_n)-\int_{\R^N} \lambda_n\eta^2_n u_n^2 \mathrm d x\\
=&\int_{\R^N}\left(\nabla u_n\nabla (\eta_n^2 u_n) +\widetilde V_\varepsilon \eta_n^2 u_n^2-\eta_n^2 \bar f(u_n) u_n-\lambda_0\eta^2_n u_n^2\right) \mathrm d x\\
&+4\Phi_{\varepsilon_n}(u_n)^{\frac12}\xi_1(\Upsilon(u_n)) \int_{\R^{N} }\chi_{u_n}       \eta_n^2 u_n^2 \mathrm dx+o_n(1)\\
\geq &\int_{\R^N}\left(|\nabla (\eta_nu_n)|^2 +  \eta_n^2 u_n^2-|\nabla\eta_n|^2u_n^2-(\frac{f(u_n)}{u_n} +\lambda_0 )^+\eta_n^2 u_n^2\right) \mathrm d x 
 +o_n(1)\\
\geq& \int_{\R^N}\left(|\nabla (\eta_nu_n)|^2 +  \eta_n^2 u_n^2- \frac 4{R_n^2}u_n^2-C_N\eta_n^2 u_n^{2+\frac4N}\right) \mathrm d x 
 +o_n(1)\\
= & \int_{\R^N}\left(|\nabla (\eta_nu_n)|^2 +  \eta_n^2 u_n^2- C_N\eta_n^2 u_n^{2+\frac4N}\right) \mathrm d x +o_n(1).
\end{aligned}
$$
 By \eqref{equa 4.1}, $\|u_n\|_{H^1( B(y_n, R_n))}\leq \sum_{j=1}^{\ell}\|\tilde U_j  (\cdot-y_n^j)\|_{H^1(  B(y_n , R_n))}+3\rho_0\leq 4\rho_0 $ for large $n$.
Therefore,
$$\begin{aligned}
  \|\eta_n u_n\|^2\leq C \int_{\R^N}\eta_n^2 u_n^{2+\frac4N}+ o_n(1)&\leq C_{N}\|\eta_n u_n\|^2\|u_n\|_{H^1(  B(y_n , R_n))}^{\frac4N}+ o(1)\\
  &\leq C_{N}4^{\frac4N}\rho_0^{\frac4N}\|\eta_n u_n\|^2+ o_n(1),
\end{aligned}
$$
where $C_{N}$ is a constant. Decreasing $\rho_0$ if necessary, there holds  $C_{N}4^{\frac4N}\rho_0^{\frac4N} <1$. 
Therefore,
$$\limsup_{n\to\infty}\|u_n(\cdot+y_n)\|_{L^2(B(0,1))}\leq\limsup_{n\to+\infty}\|\eta_n u_n\|^2=0,
$$
which is a contradiction to \eqref{eq49}.

\noindent{\bf Step 3.} $\| v_n\|_{\e_n}\to 0.$

We test 
\eqref{35'} by $v_n$  to get
\begin{equation}\label{38'}
   \begin{aligned}
  I+II :=&  \int_{\R^N}\left(\nabla u_n\nabla v_n +\widetilde V_\varepsilon   u_n v_n-  \bar f(u_n) v_n -\lambda_n u_n v_n\right) \\
  &+4\Phi_{ \varepsilon_n}(u_n)^{\frac12}\xi_1(\Upsilon(u_n))\int_{\R^{N} }\chi_{u_n} u_n v_n \mathrm dx
 =o_n(1).
  \end{aligned}
\end{equation}
By Lemma \ref{decay},
$$\begin{aligned}
   \int_{\R^{N} }\chi_{u_n}  u_n v_n  \mathrm dx 
  =&
\int_{\R^{N} }\chi_{u_n} |u_n|^2-\chi_{u_n} u_n \sum_{j=1}^\ell (\phi_{\e_n} W_j) (\cdot-y_{n}^{j} )dx\\
\geq& -C e^{-\xi_1(\Upsilon(u_n))}.
\end{aligned}
$$
Hence, $II \geq -o_n(1)$, which implies $I\leq o_n(1)$.
Then  we have
$$\begin{aligned}
  \|v_n\|^2_{ {\varepsilon_n}} 
  &
  =I-\int_{\R^N}\sum_{j=1}^\ell\left(\nabla  (\phi_{\e_n}  W_j )(\cdot-y_{n}^{j})\nabla v_n +\widetilde V_\varepsilon     (\phi_{\e_n}  W_j ) (\cdot-y_{n}^{j})v_n\right)+\int_{\R^N}(\bar f(u_n)+\lambda_n u_n) v_n.
\end{aligned}
$$
We have, by $v_n(\cdot+y_n^j) \rightharpoonup 0$ in $H^1(\R^N)$  and the decay property of $W_j$,
 \[
  \int_{\R^N}  \nabla    (\phi_{\e_n}W_j)  (\cdot-y_{n}^j )\nabla v_n =\int_{\R^N}  \nabla    W_j \nabla (v_n(\cdot+y_n) ) +o_n(1)=o_n(1),
 \]
 \[
  \int_{\R^N}|V_\varepsilon    (\phi_{\e_n}  W_j)  (\cdot-y_{n}^j )v_n|\leq \int_{\R^N} V_0 |W_j v_n(\cdot+y_n) |  =o_n(1).
 \]
 Then 
 \[\|v_n\|^2_{ {\varepsilon_n}} \leq o_n(1)+\int_{\R^N}( \frac{\bar f (u_n)}{u_n}+\lambda_n )   u_n v_n.\]
 Note that $\bar f(t)=f(t)$ for $|t|\leq K_0$
  and $\bar f(t)\leq f(t)$ for $t\geq 0$.
By (F5), there is $\delta>0$  such that $|\bar f(t)/t|\leq t^{-\frac12}$ for $|t|\leq \delta$. By (F2), making $\delta$ smaller if necessary,
$\bar f(t)/t+\lambda_n\leq 0$ for $|t|\leq\delta$. By (F3),
$|\bar f(t)/t+\lambda_n|\leq C|t|^{\frac 4N}$ for $|t|\geq\delta$. So we have 
\begin{equation*}
  \int_{\{|u_n|\geq \delta\}}( \frac{\bar f (u_n)}{u_n}+\lambda_n )   u_n v_n\leq C \int_{\R^N}|u_n|^{1+\frac4N}|v_n|\to 0.
\end{equation*}
On the other hand, for any $R>0$, setting  
$B_R=\cup_{j=1}^\ell B(y_n^j,R)$, we have that
\[
   \int_{\{|u_n|\leq \delta\}\cap B_R} ( \frac{\bar f (u_n)}{u_n}+\lambda_n )  u_n v_n \leq  \int_{B_R}    |u_n|^\frac12 |v_n|\leq  R^{N(\frac34-\frac{N}{2N+4})} |u_n|_2^\frac12|v_n|_{2+\frac4N}\to0,
\]
\[\begin{aligned}
   \int_{\{|u_n|\leq \delta\}\setminus B_R} ( \frac{\bar f (u_n)}{u_n}+\lambda_n )  u_n v_n \leq &   \int_{\{|u_n|\leq \delta\}\setminus B_R} | \frac{\bar f (u_n)}{u_n}+\lambda_n |  |u_n| \sum_{j=1}^\ell W_j (\cdot-y_{n}^{j} )\\
 \leq & C \int_{\mathbb R^N\setminus B_R}  |u_n|^\frac12 \sum_{j=1}^\ell W_j (\cdot-y_{n}^{j} )\leq C  |u_n|_2^\frac12 e^{-R}.
\end{aligned}
\]
Hence, there holds $\lim_{n\to\infty} \|v_n\|^2_{ {\varepsilon_n}}=0$.

\noindent\textbf{Step 4.} Completion of the proof.
Let $z^j$ be the unique maximum point of $W_j$, 
we set $\boldsymbol{U}=(U_1,\cdots,U_\ell)=(W_1(\cdot+z^1),\cdots,W_\ell(\cdot+z^\ell))\in H^1_r(\R^N)^\ell$.
Since \[\int_{\R^N\setminus B(0, 2R_0)}W_j^2=\lim_{n\to\infty}\int_{B(y^j_n, \frac13\Upsilon(u_n)\setminus B(y^j_n, 2R_0)}u_n^2\leq \frac{\rho_1^2}{16\ell^2}, \] 
we have $|z^j|\leq 2R_0$.
By Step 3 and similarly to Lemma \ref{Brezislieb} (ii),
 we have 
\[
\lim_{n\to+\infty}\int_{\R^N}\bar F(u_n)=\sum_{j=1}^\ell\int_{\R^N} \bar F(W_j)
=\sum_{j=1}^\ell\int_{\R^N}  F(W_j)
=\sum_{j=1}^\ell\int_{\R^N}  F(U_j).
\]
Therefore, for $\boldsymbol{\mu}=(V(y^1),\cdots,V(y^\ell))\in [\mu_0,V_0]^\ell$,
$$\begin{aligned}
  \mathbb{J}_{\boldsymbol{\mu}}(\boldsymbol{U})\leq &\lim_{n\to\infty} \Gamma_{\e_n}(u_n) \leq \ell E_{\ell^{-1}\alpha}+\frac12V_0\alpha.
\end{aligned}
$$
Then $\boldsymbol{U}\in K_\alpha$. Setting $z_{n,j}=y_n^j+z^j$, we have completed the proof.
\end{proof}

\section{Existence of critical points}

 \subsection{Gradient estimates}
 Let $d_\e>0$ be   such that $d_\e\to 0$ as $\e\to 0$.
By Proposition \ref{pro2.6},  there are $\nu_{L}>0$, $\varepsilon_{L}>0$ such that   if $\varepsilon\in(0,\varepsilon_{L})$, then
 \begin{equation}\label{eq4.1}
  \|\Gamma_{\varepsilon}|_{\mathcal M^\e_\alpha}^{\prime}(u)\|_{*}  \geq 2 \nu_{L},\mbox{ provided that  $u\in  Z(3\rho_0, 2\delta_0  )\setminus Z(\rho_0, 3\delta_0 )\cap [ \Gamma_{\varepsilon}\leq \ell E_{\ell^{-1}\alpha}+\frac12V_0\alpha+2d_\e]$.}
\end{equation}  
 Here we use notation
  $[ \Gamma_{\varepsilon}\leq a]:=\set{u\in \mathcal M^\e_\alpha | \Gamma_{\varepsilon}(u)\leq a}$.
  To prove the existence of a critical point for all small $\e$, we  assume to the contrary that
  \begin{itemize}
   \item[(A)] For any small  $\varepsilon_1\in(0,\varepsilon_{L})$, there exists $\varepsilon\in(0,\varepsilon_1)$ such that  $\Gamma_{\varepsilon}$ has no critical points in
  $  Z (3\rho_0, 3\delta_0)\cap [ \Gamma_{\varepsilon}\leq \ell E_{\ell^{-1}\alpha}+\frac12V_0\alpha+ 2d_{\e}]$.
  \end{itemize}
  Then by Proposition \ref{pro2.5}, there is  $\nu_\e>0$ such that 
   \begin{equation}\label{equ4.2}
    \|\Gamma_{\varepsilon}|_{\mathcal M^\e_\alpha}'(u)\|_{*}\geq 2 \nu_\varepsilon,\mbox{  provided 
    $u\in   Z (3\rho_0, 3\delta_0)\cap [ \Gamma_{\varepsilon}\leq \ell E_{\ell^{-1}\alpha}+\frac12V_0\alpha+ 2d_{\e}]$.}
   \end{equation}

Next we give a gradient estimate when  $\varepsilon \Upsilon_j (u) \in  O^{3\delta_0 }\setminus O^{\delta_0}$ for some $j$. In fact, we show 
\begin{proposition}\label{propo4.1}
  Assume (A).
  Decreasing $\nu_L$ if necessary, it holds that \[\|\Gamma_{\varepsilon}|_{\mathcal M^\e_\alpha}^{\prime}(u)\|_{*} \geq 2 \nu_L \varepsilon\] for all small $\varepsilon$,  provided that 
  $u\in  Z (3\rho_0, 3\delta_0) \cap [ \Gamma_{\varepsilon}\leq \ell E_{\ell^{-1}\alpha}+\frac12 V_0\alpha+2d_{\e}]$ and $\varepsilon \Upsilon_j (u) \in  O^{3\delta_0 }\setminus O^{\delta_0}$ for some $j$.
\end{proposition}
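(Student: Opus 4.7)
The plan is a proof by contradiction. Suppose there exist a sequence $\varepsilon_n\to 0$ and $u_n$ in the stated region with $\|\Gamma_{\varepsilon_n}|_{\mathcal M_\alpha^{\varepsilon_n}}'(u_n)\|_*<2\nu_L\varepsilon_n$, where $\nu_L>0$ will be chosen sufficiently small. The guiding heuristic is that the $j$-th peak of $u_n$ lies in a region where $|\nabla V|\ge\nu_0$ by (V3), so translating that peak alone in the direction $-\nabla V(\varepsilon_n\Upsilon_j(u_n))$ should decrease $\Gamma_{\varepsilon_n}$ at a rate of order $\varepsilon_n\nu_0$, which will contradict the assumed smallness.

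Fix one such $\varepsilon$ and $u$. First, Proposition~\ref{lembound} applied with $b_\varepsilon:=2\nu_L\varepsilon$ yields the fine exterior decay
\[
\int_{\R^N\setminus\cup_k B(\Upsilon_k(u),R)}(|\nabla u|^2+u^2)\le C\bigl(\nu_L\varepsilon+e^{-cR}+e^{-c/\varepsilon}\bigr),\qquad R\ge 8R_0.
\]
Next, set $h:=-\nabla V(\varepsilon\Upsilon_j(u))/|\nabla V(\varepsilon\Upsilon_j(u))|$ so that $h\cdot\nabla V(\varepsilon\Upsilon_j(u))\le-\nu_0$, and choose $R_*:=|\log\varepsilon|^2$, which satisfies $\varepsilon R_*\to 0$ while $e^{-cR_*}\ll\varepsilon$. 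Pick a cutoff $\eta\in C_c^\infty(\R^N;[0,1])$ with $\eta\equiv 1$ on $B(\Upsilon_j(u),R_*/2)$, $\mathrm{supp}\,\eta\subset B(\Upsilon_j(u),R_*)$, and $|\nabla\eta|\le 4/R_*$. Define the localized translation $\Theta_t(y):=y+t\eta(y)h$ (a $C^1$-diffeomorphism of $\R^N$ for small $|t|$) and the $L^2$-renormalized curve $\tilde u_t:=\alpha^{1/2}(u\circ\Theta_{-t})/|u\circ\Theta_{-t}|_{L^2}\in\mathcal M_\alpha^\varepsilon$.

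The heart of the argument is to compute $\tfrac{d}{dt}\big|_{t=0}\Gamma_\varepsilon(\tilde u_t)$ via change of variables and the Jacobian expansion $\det D\Theta_t=1+t(h\cdot\nabla\eta)+O(t^2)$. Every resulting integrand involves only $u$ and $\nabla u$, never second derivatives, and the dominant contribution is
\[
\frac{\varepsilon}{2}\int_{\R^N}\eta(y)\bigl(h\cdot\nabla V(\varepsilon y)\bigr)u(y)^2\,dy\le -\frac{\varepsilon\nu_0}{4}\int_{B(\Upsilon_j(u),R_*/2)}u^2\le -c_1\varepsilon\nu_0,
\]
using the $C^1$-regularity of $V$ on $\overline{O^{5\delta_0}}$ (valid on $\mathrm{supp}\,\eta$ because $\varepsilon R_*\to 0$) and the lower mass bound on the $j$-th peak coming from Lemmas~\ref{compact} and~\ref{lemma3.4}. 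Every remaining term either lives on $\mathrm{supp}\,\nabla\eta$ and is controlled by the fine decay above (yielding $O(\nu_L\varepsilon)+o(\varepsilon)$), comes from the $L^2$-rescaling correction $s=\alpha^{-1}\int u^2(h\cdot\nabla\eta)$ (same smallness), or is truly negligible: $\Phi_\varepsilon\equiv 0$ and hence $\Phi_\varepsilon'\equiv 0$ throughout $Z(3\rho_0,3\delta_0)$ for small $\varepsilon$ and large $L$ (cf.~\eqref{eq40}), while $\Psi_\varepsilon$-type terms are exponentially small by Lemma~\ref{cor2.3}. The total error bound is thus $C_*\nu_L\varepsilon+o(\varepsilon)$ for a universal $C_*$.

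On the other hand, interpreting $\tfrac{d}{dt}\big|_{t=0}\Gamma_\varepsilon(\tilde u_t)=\langle\Gamma_\varepsilon|_{\mathcal M_\alpha^\varepsilon}'(u),\tilde v\rangle$ with $\tilde v$ the tangent vector to the curve at $t=0$, and bounding $\|\tilde v\|_\varepsilon\le C_0$, the hypothesis $\|\Gamma_\varepsilon|_{\mathcal M_\alpha^\varepsilon}'(u)\|_*<2\nu_L\varepsilon$ forces $|\tfrac{d}{dt}|_{t=0}\Gamma_\varepsilon(\tilde u_t)|\le 2C_0\nu_L\varepsilon$. Choosing $\nu_L$ small enough (depending on $c_1\nu_0$, $C_*$, $C_0$) produces a contradiction for all sufficiently small $\varepsilon$. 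The main obstacle is precisely this last duality pairing: the formal tangent vector $-\eta h\cdot\nabla u$ (together with the rescaling correction) lies only in $L^2$, and without a priori $H^2$-regularity on $u$ one cannot directly assert $\tilde v\in H_\varepsilon$. This is exactly the point at which the new mechanism announced in the Introduction enters; the direct translation test is to be replaced by an elliptic minimization on a hyperplane of the Sobolev space, whose Euler--Lagrange balance delivers the required $\varepsilon$-lower bound without demanding any $H^2$-regularity on $u$.
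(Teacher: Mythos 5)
Your setup and heuristic are right, and you correctly identify the single crucial obstruction: the formal tangent vector $-\eta\, h\cdot\nabla u$ (plus rescaling correction) lies only in $L^2$ for $u\in H^1$, so the curve $\tilde u_t$ is continuous but not Lipschitz in $H_\varepsilon$, and one cannot bound $|\tfrac{d}{dt}\big|_{t=0}\Gamma_\varepsilon(\tilde u_t)|$ by $\|\Gamma_\varepsilon|_{\mathcal M_\alpha^\varepsilon}'(u)\|_* $ without a priori $H^2$-regularity. Your last sentence names the remedy, but that remedy is precisely the substantive content of the proof, and it is left entirely unexecuted. The proof is incomplete.

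Concretely, what is missing is Step~1 and Step~2 of the paper's argument. One must first split $\bar f = -\tilde f_1 + \tilde f_2$ with $\tilde f_1(t)\sim -\sigma t\log|t|$ near $0$ monotone increasing and $\tilde f_2$ Lipschitz (this is where (F5) enters in an essential way, and you never invoke it), and then construct the regularized replacement $w_\varepsilon$ by minimizing the strictly convex, coercive functional $\mathcal I(w)=\tfrac12\|w\|_\varepsilon^2+\int\tilde F_1(w)-\int\tilde f_2(u_\varepsilon)w$ over the affine hyperplane $\{w:\int wu_\varepsilon=\alpha\}$. Its Euler--Lagrange equation $-\Delta w_\varepsilon+\widetilde V(\varepsilon x)w_\varepsilon+\tilde f_1(w_\varepsilon)=\tilde f_2(u_\varepsilon)+\lambda_\varepsilon u_\varepsilon$ has an $L^2$ right-hand side, so $w_\varepsilon\in H^2_{\mathrm{loc}}$. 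The key estimate $\|w_\varepsilon-u_\varepsilon\|_\varepsilon=o_\varepsilon(\varepsilon)$, which transfers the fine decay from $u_\varepsilon$ to $w_\varepsilon$ and makes the replacement harmless, comes from testing the near-critical-point property of $u_\varepsilon$ against $u_\varepsilon-w_\varepsilon\in T_{u_\varepsilon}\mathcal M_\alpha^\varepsilon$ and using the monotonicity of $\tilde f_1$; this step you do not address. Finally, the Pohozaev-type balance is not extracted from $\tfrac{d}{dt}\Gamma_\varepsilon(\tilde u_t)$ but by testing the \emph{equation for} $w_\varepsilon$ against $\partial_{x_1}(\psi_\varepsilon w_\varepsilon)$ — admissible precisely because $w_\varepsilon\in H^2_{\mathrm{loc}}$ — and integrating by parts; the decay estimates then force $\int\partial_{x_1}\widetilde V(\varepsilon\cdot)\,\psi_\varepsilon w_\varepsilon^2=o_\varepsilon(1)$, which contradicts $\inf_{O^{3\delta_0}\setminus O^{\delta_0}}|\nabla V|\ge\nu_0$. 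Without these three steps the argument does not close.
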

\begin{proof}
To prove Proposition \ref{propo4.1}, we consider $u_\varepsilon\in  Z (3\rho_0, 3\delta_0)$ such that  $\varepsilon \Upsilon_{j_\e} (u_\varepsilon) \in  O^{3\delta_0 }\setminus O^{\delta_0}$ and 
$\|\Gamma_{\varepsilon}|_{\mathcal M^\e_\alpha}^{\prime}(u_\e)\|_{*} =o_\varepsilon(\varepsilon) $ to get a contradiction as $\varepsilon\to 0$.
From  $\|\Gamma_{\varepsilon}|_{\mathcal M^\e_\alpha}^{\prime}(u_\e)\|_{*} =o_\varepsilon(\varepsilon)$ and  Proposition 
\ref{lembound},
it follows   that
\[
    \int_{ \R^N\setminus \cup_{j=1}^\ell B(\Upsilon_j(u_\e),\frac{1}{10}\xi_1( \Upsilon(u_\e)))} \left(|\nabla u_\e|^2+  u_\e^2 \right) 
  \leq  C   e^{-c\xi_1(\Upsilon(u_\e))}+ o_\e(\e). 
 \]
 Hence,
$\Phi_\varepsilon(u_\varepsilon)=0$ and $\Phi_\varepsilon'(u_\varepsilon)=0$ for small $\varepsilon$.

We set 
\[
\begin{aligned}
  \tilde{f_1}(t) = \begin{cases}
-\sigma t\log |t|, & t\in (-e^{-1}, e^{-1}),\\
\sigma e^{-1}\mbox{sgn}(t), & t\in  (-\infty, -e^{-1}]\cup [e^{-1}, +\infty),
  \end{cases}\quad  \tilde{f_2}= \bar f+\tilde f_1,
\end{aligned}  
\]
and $\tilde F_1(u)=\int_0^{|u|} \tilde f_1(t) \rd t$, $\tilde F_2(u)=\int_0^{|u|} \tilde f_2(t) \rd t$, where $\mbox{sgn}$ is the signum.
In $(0,+\infty)$,
$\tilde{f_1}$ is  increasing; $\tilde F_1$ is  convex;  and  by (F5) and $\bar f(t)=\min\{f(K_0),f(t)\}$,
\begin{equation}\label{eqf2}
    |\tilde f_2(t)|\leq Ct,\quad  |\tilde F_2(t)|\leq Ct^2.
\end{equation}
{\bf Step 1}. In this step, we  show the following result.
 \begin{lemma}\label{lemma42}
  There is  a unique $(\lambda_\varepsilon, w_\varepsilon)\in \R\times H_\varepsilon$ such that $\int_{\R^N} w_\e u_\e=\alpha$ and 
  \begin{equation}\label{eq 4.11}
  -\Delta w_\varepsilon+ \widetilde V(\varepsilon x) w_\e +\tilde f_1(w_\e)= \tilde f_2(u_\e) +\lambda_\e u_\e.
  \end{equation}
  Moreover, the following statements hold.
  \begin{enumerate}
    \item
  There is a positive constant $C$ independent of $\varepsilon$ such that $|\lambda_\e|+\|w_\varepsilon\|_\varepsilon + \|\tilde F_1(w_\e)\|_{L^1(\R^N)}\leq C$.
  \item  There hold
\[  
    \|w_\e-u_\e\|_\e+ |\Upsilon_{j_\e} (w_\varepsilon)- \Upsilon_{j_\e} (u_\varepsilon)| =o_\e(\e).
\] 
    \item   $\|\tilde f_2(u_\e)\|_{L^2(\R^N)} $ is bounded, $ w_\e \in H^2_{loc}(\R^N)$ and $ \tilde f_1(w_\e) \in L^\infty(\R^N)$
  \end{enumerate}
\end{lemma}

\begin{proof}
Consider the minimization problem 
  \[ e_\varepsilon= \inf\Big\{ \mathcal I(w)= \frac12 \int_{\R^N} |\nabla w |^2  +\widetilde{V}(\varepsilon x) w^2 +\int_{\R^N} \tilde  F_1(w) - \int_{\R^N} \tilde f_2(u_\e) w \ \Big| \ \int_{\R^N} w u_\e =\alpha \Big\}.
\]
By the compact embedding from $H_\varepsilon$ to $L^q(\R^N)$ for $q\in (\frac{2N}{N+2}, 2^*)$ (\cite[Lemma 2.3]{zhang1}) and the fact that $\mathcal I$ is continuous, convex and coercive,   
   $e_\e$ is attained at some $w_\e$.   The uniqueness follows from the convexity of $\tilde F_1$.
   Since 
\(
e_\e\leq \mathcal I(u_\e)  <C
\)
for some $C>0$, we can conclude from \eqref{eqf2} that 
\[
\frac12 \|w_\e\|_\e^2  + \|\tilde F_1(w_\e)\|_{L^1(\R^N)} \leq C(1+ \|u_\e\|_\e \|w_\e\|_\e) \leq C'(1+\|u_\e\|_\e^2)+\frac14  \|w_\e\|_\e^2.
\]
Then $\|w_\e\|_\e$ is bounded and we can prove (i) by testing \eqref{eq 4.11} by $w_\e$.

To see (ii), we first note that $u_\e-w_\e\in T_{u_\e}\mathcal M^\varepsilon_\alpha$. By assumption (A), $u_\e-w_\e\neq 0$.  
We have by \eqref{eq 4.11} and the monotonicity of $\tilde f_1$ that
\[\begin{aligned}
  o_\e(\e)\|u_\e-w_\e\|_\e=&\Gamma_{\varepsilon}^{\prime}(u_\e)(u_\e-w_\e)\\
  =& (  u_\e, u_\e- w_\e)_\e +\int_{\R^N} \tilde f_1(u_\e) (u_\e -w_\e) -\int_{\R^N} \tilde  f_2(u_\e)(u_\e-w_\e) +  O(e^{-\frac{c}{\e}}) \\
  =&\|u_\e-w_\e\|_\e^2 +\int_{\R^N} (\tilde f_1(u_\e)-\tilde f_1(w_\e))(u_\e-w_\e)+  O(e^{-\frac{c}{\e}}) \\
  \geq & \|u_\e-w_\e\|_\e^2 + O(e^{-\frac{c}{\e}}).
\end{aligned}
\]
Hence, 
$\|u_\e-w_\e\|_\e=o_\e(\e)$. 
By Lemma \ref{lem3.4} (ii),  
$ |\Upsilon_{j_\e} (w_\varepsilon)- \Upsilon_{j_\e} (u_\varepsilon)|=o_\e(\e)$.  
 
 Then,
\begin{align*}
  \Gamma_{\varepsilon}' (v_\e)\varphi=& (v_\e, \varphi)_\e +\int_{\R^N}\tilde f_1(v_\e) \varphi-\int_{\R^N}\tilde f_2(v_\e) \varphi +O(e^{-\frac{c}{\varepsilon}}) \\
  =&(w_\e, \varphi)_\e +\int_{\R^N}\tilde f_1(w_\e) \varphi-\int_{\R^N}\tilde f_2(w_\e) \varphi +O(e^{-\frac{c}{\varepsilon}})+o_\e(\e)\\
  =&\int_{\R^N}(\tilde f_2(u_\e)- \tilde f_2(w_\e))\varphi +\lambda_\e \int_{\R^N} u_\e \varphi +O(e^{-\frac{c}{\varepsilon}}) +o_\e(\e) \\
  =&\lambda_\e \int_{\R^N} (u_\e-w_\e) \varphi +o_\e(\e) =o_\e(\e).
\end{align*}
Therefore, $\|\Gamma_{\varepsilon}|_{\mathcal M_\alpha^\e}' (v_\e)\|_*=o_\e(\e)$.

To see (iii), 
 by \eqref{eqf2}, $\|\tilde f_2(u_\e)\|_{L^2(\R^N)} $ is bounded. Together with $|\tilde f_1|\leq \sigma e^{-1}$, we can 
 get from the 
  elliptic estimate that $w_\e\in H^2_{loc}(\R^N)$.
\end{proof}

   {\bf Step 2.}  
  By Lemma \ref{lemma42} (i) (ii),
 and Proposition 
\ref{lembound},
we have 
\begin{equation}\label{eq000}
    \int_{ \R^N\setminus \cup_{j=1}^\ell B(\Upsilon_j( u_\e),\frac{1}{\sqrt\e})} \left(|\nabla  u_\e|^2+   u_\e^2 + |\nabla  w_\e|^2+   w_\e^2   \right) 
  =o_\e(\e). 
 \end{equation}
 Since $|\tilde f_1(t)t| + |\tilde F_1(t)|\leq C(|t|^\frac{2N}{N+1} +t^2)$, by H\"older inequality,
 we get 
 \begin{equation}\label{eq001}
    \int_{ \frac1\e\Omega\setminus \cup_{j=1}^\ell B(\Upsilon_j( u_\e),\frac{1}{\sqrt\e})} \left( \tilde f_1( u_\e)  u_\e+\tilde f_1( w_\e) w_\e +\tilde F_1( u_\e)+\tilde F_1(  w_\e) \right)=o_\e(1). 
 \end{equation}
Since 
$ \varepsilon \Upsilon_{j_\e} (u_\varepsilon) \in O^{3\delta_0 }\setminus O^{\delta_0} $, up to a subsequence, we may assume that 
$j_\e\equiv 1$,
$  u_\e(\cdot+\Upsilon_j( u_\e))\rightharpoonup u_0\neq 0$,
$\varepsilon \Upsilon_{i} (u_\varepsilon)\to y_i$, $i=1,\cdots,\ell$ and $\frac{\partial{V}}{\partial x_1}(y_1)>\nu_0>0$.
 We take 
  \[\delta_1\in (0, \frac14\min_{y_i\neq y_1} |y_1-y_i|) \quad   \mbox{small enough such that $\displaystyle\frac{\partial{V}}{\partial x_1}>\frac{\nu_0}{2}$ in $\displaystyle B (y_1, 2\delta_1)\subset \Omega$.}\]

Choose  a smooth map  $ \psi_\e \in C_0^\infty(\mathbb R^N,[0,1])$ satisfying 
$|\nabla\psi_\e |\leq 2 \varepsilon/\delta_1 $ and
\begin{equation*}
\psi_\e (x)=
\begin{cases}
1, \  \ &  |x-y_1/\e|\leq   \delta_1  {\varepsilon^{-1}} ,\\
0, \  \ &  |x-y_1/\e| \geq  {2 }\delta_1{\varepsilon^{-1}}.
\end{cases}
\end{equation*}
By \eqref{eq000} and \eqref{eq001},
\[
\int_{\R^N}|\nabla\psi_\e|(|\nabla   u_\e|^2+    u_\e^2+|\nabla   w_\e|^2+   w_\e^2 +\tilde F_1(  u_\e) +\tilde f_1(  u_\e)  u_\e +\tilde F_1(  w_\e) +\tilde f_1( w_\e)   w_\e )=o_\varepsilon(\varepsilon ).  
\]
Testing \eqref{eq 4.11} by 
$\frac{\partial (\psi_\e  w_\e)}{\partial x_1}\in H_\e$, we get 
\[
  \int_{\R^N}\left\{ \nabla   w_\e \nabla \frac{\partial (\psi_\e   w_\e)}{\partial x_1} 
  +\tilde f_1(  w_\e) \frac{\partial (\psi_\e   w_\e)}{\partial x_1} - \tilde f_2(u_\e) \frac{\partial (\psi_\e   w_\e)}{\partial x_1}-\lambda_\varepsilon u_\e \frac{\partial (\psi_\e w_\e)}{\partial x_1}\right\}=-\int_{\R^N}\widetilde{V}(\e x) w_\e \frac{\partial (\psi_\e w_\e)}{\partial x_1}.
\]
Integrating by parts, we get
\[\begin{aligned}
  \int_{\R^N}\nabla w_\e \nabla \frac{\partial (\psi_\e w_\e)}{\partial x_1}  =&\int_{\R^N}\left\{\frac12 \frac{\partial(\psi_\e|\nabla w_\e|^2)}{\partial x_1}+ \frac12 |\nabla w_\e|^2 \frac{\partial \psi_\e}{\partial x_1} + w_\e \nabla w_\e \nabla \frac{\partial \psi_\e}{\partial x_1} +\nabla w_\e \nabla \psi_\e \frac{\partial w_\e}{\partial x_1}\right\}=o_\e(\e), \\
  \int_{\R^N}\tilde f_1(w_\e) \frac{\partial (\psi_\e w_\e)}{\partial x_1} =&\int_{\R^N}\left\{ \frac{\partial(\psi_\e \tilde F_1(w_\e))}{\partial x_1}+
  \frac{\partial \psi_\e}{\partial x_1} [\tilde f_1(w_\e)w_\e -\tilde F_1(w_\e)] 
  \right\}=o_\e(\e),\\
  \int_{\R^N}\tilde f_2(u_\e) \frac{\partial (\psi_\e w_\e)}{\partial x_1}=&\int_{\R^N}\tilde f_2(u_\e) \frac{\partial (\psi_\e u_\e)}{\partial x_1}+o_\e (\e)=\int_{\R^N}\left\{  
  \frac{\partial \psi_\e}{\partial x_1} [\tilde f_2(u_\e)u_\e -\tilde F_2(u_\e)] 
  \right\}+o_\e(\e)=o_\e(\e),\\
  \int_{\R^N} \lambda_\varepsilon u_\e \frac{\partial (\psi_\e w_\e)}{\partial x_1}=&\int_{\R^N} \lambda_\varepsilon w_\e \frac{\partial (\psi_\e w_\e)}{\partial x_1}+o_\varepsilon(\varepsilon)=\frac{\lambda_\varepsilon}2\int_{\R^N} \frac{\partial \psi_\e}{\partial x_1} w_\e^2=o_\varepsilon(\varepsilon),
\end{aligned}
   \]
and
\[
  \int_{\R^N}\widetilde{V}(\e x) w_\e \frac{\partial (\psi_\e w_\e)}{\partial x_1}  =\frac12\int_{\R^N} \left\{
  \frac{\partial (\widetilde V_\e\psi_\e w_\e^2)}{\partial x_1}+  
  \widetilde{V}_\e \frac{\partial\psi_\e}{\partial x_1} w_\e^2 -\frac{\partial \widetilde{V}_\e}{\partial x_1}\psi_\e w_\e^2\right\}=-\frac\e2 \int_{\R^N}\frac{\partial \widetilde{V}(\e x)}{\partial x_1}\psi_\e w_\e^2+o_\e(\e).
\]
Therefore,
\[
  \int_{\R^N}\frac{\partial \widetilde{V}(\e x)}{\partial x_1}\psi_\e w_\e^2 =o_\e(1).  
\]
Taking limits as $\e\to 0$, we have 
\[
\frac{\nu_0}2\int_{\R^N} u_0^2\leq \liminf_{\varepsilon\to 0}  \int_{\R^N}\frac{\partial \widetilde{V}(\e x)}{\partial x_1}\psi_\e w_\e^2 =0.
\]
This is a contradiction.
 \end{proof}
   
  \begin{remark}\label{rek4.3}
    \begin{enumerate}
      \item  To deal with the nonlipschitzian property of the nonlinearity, we have considered the problem in the suitable Hilbert space $H_\e$ to recover the smoothness of energy functional. 
      However, the 
      global $W^{2,p}$ estimate is not applicable for the corresponding operator $ -\Delta +\widetilde V_\e $ for $w_\e$  since $\widetilde{V}_\e$ is unbounded.
      \item We explain how our arguments work for the setting of \cite{Byeon-Tanaka}. In fact, in their setting, there is no restriction on $L^2$ norm of $u_\e$ and   $f(u)/u$ has no singularity,  we can just consider the following equation  to continue our arguments
        \[ 
        -\Delta w_\varepsilon+   V(\varepsilon x) w_\e  =  f(u_\e).
        \]
    \end{enumerate}

  \end{remark} 

 \subsection{Deformation along  negative pseudogradient flow}
By \eqref{eq4.1}, \eqref{equ4.2} and Proposition \ref{propo4.1}, there exists a pseudogradient field on $\mathcal{M}_\alpha^\e$.
\begin{lemma}\label{lemma field}
  There is a locally lipschitzian continuous vector field $\mathcal W:\mathcal{M}_\alpha^\e \to H_\e$ such that the following statements hold.
  \begin{enumerate}
    \item  $\mathcal W(u)\in T_{u}\mathcal M_\alpha^\e$, $\Gamma_\e'(u) \mathcal W(u)\geq 0$ and $\|\mathcal{V}(u)\|_\e \leq 1$ for $u\in \mathcal{M}_\alpha^\e$.
    \item  $ \Gamma_{\varepsilon}' (u) \mathcal W(u)  =0$ if $u\notin  Z (3\rho_0, 3\delta_0 )$.
      \item $ \Gamma_{\varepsilon}' (u) \mathcal W(u)  \geq   \nu_{\e}$,  provided that  $u\in  Z(   2\rho_0, 2\delta_0 ) \cap [   \Gamma_{\varepsilon}\leq \ell E_{\ell^{-1}\alpha}+\frac12V_0\alpha+ d_{\e}]$. 
      \item $ \Gamma_{\varepsilon}' (u) \mathcal W(u)  \geq   \nu_{L}\e$,  provided that  $u\in  Z( 2\rho_0, 2\delta_0 )\setminus Z( \rho_0, \delta_0 ) \cap [   \Gamma_{\varepsilon}\leq \ell E_{\ell^{-1}\alpha}+\frac12V_0\alpha+ d_{\e}]$.
    \item  $ \Gamma_{\varepsilon}' (u) \mathcal W(u)  \geq   \nu_{L}$,  provided that  $u\in  Z(2\rho_0, 2\delta_0  )\setminus Z(\rho_0, 3\delta_0 )\cap [   \Gamma_{\varepsilon}\leq \ell E_{\ell^{-1}\alpha}+\frac12V_0\alpha+ d_\e]$. 
  \end{enumerate}
\end{lemma}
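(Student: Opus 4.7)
The plan is to carry out the standard pseudogradient patching on the Hilbert manifold $\mathcal M_\alpha^\e$ and then multiply by a Lipschitz cutoff to enforce (ii). The only content beyond routine construction is a short dichotomy which I will use in (iv) to attribute the $\nu_L\e$ bound either to \eqref{eq4.1} or to Proposition \ref{propo4.1}.

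Under assumption (A) together with \eqref{equ4.2}, \eqref{eq4.1} and Proposition \ref{propo4.1}, one has $\|\Gamma_\e|_{\mathcal M_\alpha^\e}'(u_0)\|_*>0$ for every $u_0\in Z(3\rho_0,3\delta_0)\cap[\Gamma_\e\le\ell E_{\ell^{-1}\alpha}+\tfrac12V_0\alpha+2d_\e]$. For each such $u_0$ I would pick $w_{u_0}\in T_{u_0}\mathcal M_\alpha^\e$ with $\|w_{u_0}\|_\e\le 1$ and $\Gamma_\e'(u_0)w_{u_0}\ge\tfrac23\|\Gamma_\e|_{\mathcal M_\alpha^\e}'(u_0)\|_*$, then transport it to nearby $u$ via the continuous $L^2$-orthogonal projection $P_uv=v-\alpha^{-1}(\int vu)u$ onto $T_u\mathcal M_\alpha^\e$. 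Continuity of $u\mapsto P_u$ and of $u\mapsto\Gamma_\e'(u)$ yields a small neighborhood $U_{u_0}$ on which $\|P_uw_{u_0}\|_\e\le 2$ and $\Gamma_\e'(u)(P_uw_{u_0})\ge\tfrac12\|\Gamma_\e|_{\mathcal M_\alpha^\e}'(u)\|_*$. Passing to a locally finite refinement $\{U_{u_i}\}$ with Lipschitz partition of unity $\{\eta_i\}$, I would set $\tilde{\mathcal W}(u)=\sum_i\eta_i(u)P_uw_{u_i}$ (extended by $0$) and define $\mathcal W:=\tfrac12\chi\tilde{\mathcal W}$, where $\chi:\mathcal M_\alpha^\e\to[0,1]$ is a locally Lipschitz cutoff equal to $1$ on $Z(2\rho_0,2\delta_0)$ and to $0$ off $Z(3\rho_0,3\delta_0)$; such $\chi$ exists by the separation remark following the definition of $Z(\rho,\delta)$. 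Properties (i) and (ii) hold by construction.

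After a harmless rescaling of $\nu_\e,\nu_L$ by a constant factor, I would deduce (iii)--(v) directly from the three a priori gradient bounds. Property (iii) is \eqref{equ4.2}; property (v) follows from \eqref{eq4.1} once one notes the inclusion $Z(2\rho_0,2\delta_0)\setminus Z(\rho_0,3\delta_0)\subset Z(3\rho_0,2\delta_0)\setminus Z(\rho_0,3\delta_0)$. The point needing care is (iv): any $u\in Z(2\rho_0,2\delta_0)\setminus Z(\rho_0,\delta_0)$ must satisfy either $\dist_{H_\e}(u,Z_{L,\e})\ge\rho_0$, in which case, since $\max_j\dist(\e\Upsilon_j(u),O)<2\delta_0<3\delta_0$, the function $u$ lies in $Z(3\rho_0,2\delta_0)\setminus Z(\rho_0,3\delta_0)$ and \eqref{eq4.1} produces the stronger bound $2\nu_L\ge 2\nu_L\e$; or $\max_j\dist(\e\Upsilon_j(u),O)\ge\delta_0$, in which case some $\e\Upsilon_j(u)\in O^{2\delta_0}\setminus O^{\delta_0}\subset O^{3\delta_0}\setminus O^{\delta_0}$ and Proposition \ref{propo4.1} delivers $2\nu_L\e$.

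The main obstacle is precisely this dichotomy for (iv); everything else is standard pseudogradient machinery, and the Lipschitz cutoff $\chi$ is supplied by the $H_\e$-separation between $\partial Z(3\rho_0,3\delta_0)$ and $Z(2\rho_0,2\delta_0)$ already recorded after the definition of $Z(\rho,\delta)$.
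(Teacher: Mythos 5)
Your proposal is correct, and since the paper gives no proof of this lemma (it is stated as a direct consequence of \eqref{eq4.1}, \eqref{equ4.2} and Proposition \ref{propo4.1}), your construction supplies exactly the standard pseudogradient patching that the paper takes for granted, with the key dichotomy in (iv) — split according to whether $\dist_{H_\e}(u,Z_{L,\e})\ge\rho_0$ or $\max_j\dist(\e\Upsilon_j(u),O)\ge\delta_0$ — correctly identified and correctly routed to \eqref{eq4.1} and Proposition \ref{propo4.1} respectively. One pedantic remark: in the second branch of that dichotomy, $\dist(\e\Upsilon_j(u),O)=\delta_0$ places $\e\Upsilon_j(u)$ on the boundary of $O^{\delta_0}$ and hence formally outside $O^{3\delta_0}\setminus O^{\delta_0}$; this is harmless since the contradiction argument in Proposition \ref{propo4.1} only uses that the limit point $y_1$ lies in $\overline{O^{3\delta_0}\setminus O^{\delta_0}}$, where $|\nabla V|\ge\nu_0$ still holds by continuity, so the proposition applies verbatim to the closed annulus $\{x:\delta_0\le\dist(x,O)\le 3\delta_0\}$ as well.
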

By this lemma, we have 
\begin{lemma}\label{lemma flow}
  Let $\nu_0=\min\{\frac{\delta_0 \nu_L}{4D_2}, \frac{\rho_0 \nu_L}{8} \}$, where $D_2$ is the constant given in Lemma \ref{lem3.4}.
For any $\nu\in(0,\nu_0)$, there is a descending flow $\eta\in C([0,+\infty)\times\mathcal M_\alpha^\e, \mathcal M_\alpha^\e)$ such that 
\begin{enumerate}
  \item $\eta(0, u)=u$,   and $\Gamma_{\varepsilon}(\eta(t,u))\leq \Gamma_\e(u)$   for any $t\in[0,+\infty)$ and $u\in\mathcal M_\alpha^\e$.
  \item For any $t\geq 0$, $\eta(t, u)=u$  provided that $u\notin Z(3\rho_0, 3\delta_0 )$ or $\Gamma_{\varepsilon}(u)\leq \ell E_{\ell^{-1}\alpha}+\frac12V_0\alpha-2\nu$.
  \item For any $t\geq 0$, $\eta(t, u)\in Z (3\rho_0,3\delta_0 )$  if $u\in Z (3\rho_0,3\delta_0 )$.
  \item There is $t_\e>0$ such that $\Gamma_\e(\eta(t_\e,u))<\ell E_{\ell^{-1}\alpha}+\frac12V_0\alpha-\nu$ if $u\in Z (\rho_0,\delta_0 )\cap [\Gamma_{\varepsilon}\leq \ell E_{\ell^{-1}\alpha}+\frac12V_0\alpha+ d_{\e}]$.
\end{enumerate}
\end{lemma}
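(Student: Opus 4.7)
The plan is the standard construction of a truncated pseudogradient flow, engineered so that the cutoffs respect both the sublevel of $\Gamma_\e$ and the neighborhoods $Z(\rho,\delta)$, and then to read off (i)--(iv) from an ODE comparison argument combined with the two-scale lower bounds of Lemma \ref{lemma field}.

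First I would define two Lipschitz cutoffs. Let $d(u):=\dist_{H_\e}(u, Z_{L,\e})$ and $D(u):=\max_{1\leq j\leq \ell}\dist(\e\Upsilon_j(u), O)$; note $d$ is $1$-Lipschitz on $H_\e$, and by Lemma \ref{lem3.4}(ii) the function $D$ is $\e D_2$-Lipschitz on $Z(\frac{\rho_1}{32},3\delta_0)$. Choose piecewise-linear $\chi_a,\chi_b:[0,\infty)\to[0,1]$ with $\chi_a=1$ on $[0,2\rho_0]$ and $\chi_a=0$ on $[3\rho_0,\infty)$, and analogously $\chi_b$ on the scales $2\delta_0,3\delta_0$. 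Set $\chi_1(u)=\chi_a(d(u))\chi_b(D(u))$. Let $c_\ell:=\ell E_{\ell^{-1}\alpha}+\tfrac12 V_0\alpha$ and pick a Lipschitz $\chi_2:\R\to[0,1]$ with $\chi_2(s)=0$ for $s\leq c_\ell-2\nu$ and $\chi_2(s)=1$ for $s\geq c_\ell-\nu$. Define $\beta(u):=\chi_1(u)\chi_2(\Gamma_\e(u))$ and consider the Cauchy problem
\[
\dot\eta(t)=-\beta(\eta(t))\,\mathcal W(\eta(t)),\qquad \eta(0)=u,
\]
on $\mathcal M^\e_\alpha$. Since $\mathcal W$ is locally Lipschitz and tangent to $\mathcal M^\e_\alpha$ with $\|\mathcal W\|_\e\leq 1$, and $\beta\in[0,1]$ is Lipschitz with compact support in the $d$- and $D$- scales, global existence and continuity of $\eta$ on $[0,\infty)\times \mathcal M^\e_\alpha$ follow.

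Items (i) and (ii) are then immediate: (i) follows from $\Gamma_\e'(u)\mathcal W(u)\geq 0$ (Lemma \ref{lemma field}(i)), giving $\frac{d}{dt}\Gamma_\e(\eta)=-\beta\,\Gamma_\e'\mathcal W\leq 0$; (ii) follows because outside $Z(3\rho_0,3\delta_0)$ we have $\chi_1=0$ and on $\{\Gamma_\e\leq c_\ell-2\nu\}$ we have $\chi_2=0$, in both cases $\beta\equiv 0$. For (iii), I would use the Lipschitz structure of the cutoffs to derive a differential inequality of the form
\[
\tfrac{d}{dt}\bigl(3\rho_0-d(\eta)\bigr)\geq -\|\dot\eta\|\geq -\chi_a(d(\eta))\geq -\tfrac{1}{\rho_0}\bigl(3\rho_0-d(\eta)\bigr),
\]
so Gr\"onwall gives $3\rho_0-d(\eta(t))\geq (3\rho_0-d(\eta(0)))e^{-t/\rho_0}>0$; the analogous estimate with the factor $\e D_2$ bounds $3\delta_0-D(\eta(t))$ from below. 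This shows $\eta(t)\in Z(3\rho_0,3\delta_0)$ for all $t$.

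Property (iv) is the main point and the only place where the threshold $\nu_0$ intervenes. Starting from $u\in Z(\rho_0,\delta_0)\cap[\Gamma_\e\leq c_\ell+d_\e]$, I would argue by contradiction assuming $\Gamma_\e(\eta(t))\geq c_\ell-\nu$ on $[0,t_\e]$ for $t_\e:=(d_\e+\nu)/\nu_\e$. Then $\chi_2(\Gamma_\e(\eta))\equiv 1$ throughout. Split into two cases. If $\eta(t)$ remains in $Z(2\rho_0,2\delta_0)$ on $[0,t_\e]$, then $\chi_1\equiv 1$ and Lemma \ref{lemma field}(iii) yields $\dot\Gamma_\e(\eta)\leq -\nu_\e$, so $\Gamma_\e(\eta(t_\e))\leq c_\ell+d_\e-\nu_\e t_\e<c_\ell-\nu$, contradiction. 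Otherwise let $t_1$ be the first exit time from $Z(2\rho_0,2\delta_0)$; since $u\in Z(\rho_0,\delta_0)$, the exit occurs either through $\{d=2\rho_0\}$ or through $\{D=2\delta_0\}$. In the first case, during the subinterval where $d(\eta)\in[\rho_0,2\rho_0]$ we are in $Z(2\rho_0,2\delta_0)\setminus Z(\rho_0,3\delta_0)$ (since $D<2\delta_0$), so Lemma \ref{lemma field}(v) gives $\dot\Gamma_\e\leq -\nu_L$, and because $|\dot d|\leq 1$ the subinterval has length $\geq\rho_0$, yielding a drop of at least $\nu_L\rho_0\geq 8\nu$. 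In the second case, during the subinterval where $D(\eta)\in[\delta_0,2\delta_0]$ we are in $Z(2\rho_0,2\delta_0)\setminus Z(\rho_0,\delta_0)$, so Lemma \ref{lemma field}(iv) gives $\dot\Gamma_\e\leq -\nu_L\e$; since $D$ is $\e D_2$-Lipschitz the subinterval has length $\geq \delta_0/(\e D_2)$, producing a drop of at least $\nu_L\delta_0/D_2\geq 4\nu$. Both drops exceed $\nu+d_\e$ for $\e$ small (using $d_\e\to 0$), contradicting $\Gamma_\e(\eta(t))\geq c_\ell-\nu$. The main technical obstacle is precisely the second subcase: the slow transport rate $\e D_2$ of $D(\eta)$ forces one to combine the weaker gradient estimate $\nu_L\e$ with the correspondingly longer transit time, and the specific choice $\nu_0=\min\{\delta_0\nu_L/(4D_2),\rho_0\nu_L/8\}$ is dictated by balancing these two subcases.
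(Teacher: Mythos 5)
Your proposal is correct and follows essentially the same route as the paper: define a flow $\dot\eta=-\beta(\eta)\mathcal W(\eta)$ with an energy cutoff around the level $\ell E_{\ell^{-1}\alpha}+\tfrac12V_0\alpha$, then prove (iv) by contradiction, splitting into the case where the flow stays in $Z(2\rho_0,2\delta_0)$ (use Lemma \ref{lemma field}(iii)) versus the two exit cases through $d=2\rho_0$ (Lemma \ref{lemma field}(v), drop $\geq\rho_0\nu_L$) and through $D=2\delta_0$ (Lemma \ref{lemma field}(iv) with transit time $\geq\delta_0/(\e D_2)$, drop $\geq\delta_0\nu_L/D_2$), exactly matching the paper's Cases 1--3 and the definition of $\nu_0$. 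The only cosmetic difference is that you build the spatial confinement into an explicit cutoff $\chi_1(u)=\chi_a(d(u))\chi_b(D(u))$ and prove (ii)--(iii) by a Gr\"onwall argument, whereas the paper relies on the pseudogradient $\mathcal W$ itself being supported in $Z(3\rho_0,3\delta_0)$; also you take $t_\e=(d_\e+\nu)/\nu_\e$, which in Case~1 gives only a nonstrict inequality $\Gamma_\e(\eta(t_\e))\leq c_\ell-\nu$ — the paper avoids this by taking $t_\e=(\nu+\nu_0)/\nu_\e$ and assuming WLOG $d_\e<\nu_0$, a trivial fix you should adopt.
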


\begin{proof}
  Let $\psi:\mathcal{M}_\alpha^\e\to [0,1]$ be  locally  Lipschitz continuous such that $\psi(u)=1$ if $\Gamma_\e(u)\geq \ell E_{\ell^{-1}\alpha}+\frac12V_0\alpha-\nu$ 
  and $\psi(u)=0$ if $\Gamma_\e(u)\le \ell E_{\ell^{-1}\alpha}+\frac12V_0\alpha-2\nu$.
For $t\geq 0$, $u\in \mathcal{M}_\alpha^\e$, define $\eta(t,u)$ by the following initial value problem
\[
\frac{\rd}{\rd t}\eta(t, u)= - \psi(\eta(t,u))  \mathcal{W}(\eta(t,u)),\quad  \eta(0,u)=u.
\]
Then (i)  (ii) and (iii) follow  from Lemma \ref{lemma field} (i) and (ii). 
To show (iv), we assume without loss of generality that $d_\e<\nu_0$, and set $t_\e =  \frac{\nu+\nu_0}{\nu_\e} $.
There
are
three cases.

{\bf Case 1.} $\eta(t,u)\in Z (2\rho_0, 2\delta_0 )$ for any $t\in [0,t_\e]$.

In this case, by Lemma \ref{lemma field} (iii),
\begin{align*}
\Gamma_\e(\eta(t_\e,u))\leq& \Gamma_\e(u)+  \int_{0}^{t_\e}  \frac{\rd }{\rd s} \Gamma_\e(\eta(s,u)) \rd s\\
\leq&
\ell E_{\ell^{-1}\alpha}+\frac12V_0\alpha+ d_{\e} - \int_{0}^{t_\e} \Gamma_\e'(\eta(s,u)) \mathcal{W} (\eta(s,u)) \rd s\\
\leq &E_{\ell^{-1}\alpha}+\frac12V_0\alpha+ d_{\e}-\nu_\e t_\e< E_{\ell^{-1}\alpha}+\frac12V_0\alpha -\nu.
\end{align*}

{\bf Case 2.} There is $t\in[0, t_\e]$ such that 
$\dist(\e\Upsilon_j(\eta(t, u)), O)=2\delta_0$ for some $j$ and $\eta(s, u)\in Z( 2\rho_0, 2\delta_0)$ for $s\in[0,t)$.

Let $t_2> t_1>0$ be such that $\dist(\e\Upsilon_j(\eta(t_1, u)), O)=\delta_0$, $\dist(\e\Upsilon_j(\eta(t_2, u)), O)=2\delta_0 $,  and 
$ \eta(t, u)\in  Z( 2\rho_0, 2\delta_0 )\setminus Z( \rho_0, \delta_0 ) $ for $t\in(t_1, t_2)$.
By Lemma \ref{lem3.4}, $|t_1-t_2|\geq \frac{\delta_0}{\varepsilon D_2}$.
Then by Lemma \ref{lemma field} (iv)
\begin{align*}
  \Gamma_\e(\eta(t_\e,u))\leq& \Gamma_\e(u)+  \int_{t_1}^{t_2}  \frac{\rd }{\rd s} \Gamma_\e(\eta(s,u)) \rd s\\
  \leq&
  \ell E_{\ell^{-1}\alpha}+\frac12V_0\alpha+ d_{\e} - \int_{t_1}^{t_2} \Gamma_\e'(\eta(s,u)) \mathcal{W} (\eta(s,u)) \rd s\\
  \leq &E_{\ell^{-1}\alpha}+\frac12V_0\alpha+ d_{\e}-\frac{\delta}{\varepsilon D_2}\nu_L \e< E_{\ell^{-1}\alpha}+\frac12V_0\alpha -\nu.
  \end{align*}
 
  {\bf Case 3.} There is $t\in[0, t_\e]$ such that  $\dist_{H_\e}(\eta(t, u), Z_{L,\e})\geq 2{\rho_0}$, and
$ \e\Upsilon_j(\eta(s, u))\in O^{2\delta_0}$ for any $j$ and $s\in[0,t]$.

In this case, there are $t_2> t_1>0$  such that $\dist_{H_\e}(\eta(t_1, u), Z_{L,\e})= {\rho_0}$, $\dist_{H_\e}(\eta(t_2, u), Z_{L,\e})\geq 2{\rho_0}$,  and 
$ \eta(t, u)\in  Z(2\rho_0, 2\delta_0  )\setminus Z(\rho_0, 2\delta_0 )
= Z(2\rho_0, 2\delta_0  )\setminus Z(\rho_0, 3\delta_0 )$ for $t\in(t_1, t_2)$. Then $\|\eta(t_1, u)-\eta(t_2, u)\|\geq  {\rho_0} $.
By Lemma \ref{lemma field} (i), $|t_1-t_2|\geq \rho_0$.
Then By Lemma \ref{lemma field} (v),
\begin{align*}
  \Gamma_\e(\eta(t_\e,u))\leq& \Gamma_\e(u)+  \int_{t_1}^{t_2}  \frac{\rd }{\rd s} \Gamma_\e(\eta(s,u)) \rd s\\
  \leq&
  \ell E_{\ell^{-1}\alpha}+\frac12V_0\alpha+ d_{\e} - \int_{t_1}^{t_2} \Gamma_\e'(\eta(s,u)) \mathcal{W} (\eta(s,u)) \rd s\\
  \leq &E_{\ell^{-1}\alpha}+\frac12V_0\alpha+ d_{\e}-\rho_0\nu_L  < E_{\ell^{-1}\alpha}+\frac12V_0\alpha -\nu. \qedhere
  \end{align*}

\end{proof}

\subsection{Existence of a critical point}
In this section,  
we  assume (A) and get a contradiction.
Set \[S=\set{\boldsymbol{s}=(s_1,\cdots,s_\ell)\in S_{\ell-1}| |s_j-\ell^{-1}|\leq \delta, \ j=1,\cdots,\ell},\]
where $\delta>0$ is a constant such that $\delta\ell\leq1/2$.
Define
$$\gamma_0(\boldsymbol{p},\boldsymbol{s}):=  B\sum_{j=1}^{\ell}\sqrt{ {\ell s_j} } (\phi_{\e}u_0)(\cdot-p_j)\in  \mathcal M_\alpha^\e,
$$
for each  
$$(\boldsymbol{p}, \boldsymbol{s})\in A(  L):= \Set{\boldsymbol p=(p_1,\cdot\cdot\cdot,p_{\ell_0})\in \big(\frac1\varepsilon  O^{ \delta_0}\big)^{\ell }  |  \xi(\boldsymbol p)\geq   L }\times S,$$
where $B:=\alpha^{1/2} |\sum_{j=1}^{\ell}\sqrt{ {\ell s_j} } (\phi_{\e}u_0)(\cdot-p_j)|_2^{-1}$.
We have the following lemma.
\begin{proposition}\label{pro5.2}
  There is $L_2> L_1$ such that  the following statements hold for
  $L>L_2$   and $\e\in (0,\e_L)$.
\begin{enumerate}
  \item $\gamma_0(\boldsymbol{p},\boldsymbol{s})\in Z(\rho_0, \delta_0 )$ for $(\boldsymbol{p}, \boldsymbol{s})\in A(  L)$.
  \item For any permutation $\sigma$ of $1,2,\cdots,\ell$, 
  $$\gamma_0(p_{\sigma(1)},\cdots,p_{\sigma(\ell)}, s_{\sigma(1)}, \cdots, s_{\sigma(\ell)})=\gamma_0(p_1,\cdots,p_\ell,s_1,\cdots,s_\ell).  
  $$
\item $|p_j-\Upsilon_j(\gamma_0(\boldsymbol p, \boldsymbol s))|\leq 3R_0$ up to a permutation.
\item There is $\nu\in(0,\nu_0)$ independent of $\e$ such that  for any $(\boldsymbol{p}, \boldsymbol{s})\in \partial A(  L)$,
$$\Gamma_{\varepsilon}(\gamma_{0}(\boldsymbol{p}, \boldsymbol{s}))\leq \ell E_{\ell^{-1}\alpha}+\frac12 V_0\alpha-2\nu.
$$
\item 
There is $d_\e>0$ with $d_\e\to 0$ such that 
\[ \sup_{(\boldsymbol{p}, \boldsymbol{s})\in A(L)}\Gamma_{\varepsilon}(\gamma_{0}(\boldsymbol{p}, \boldsymbol{s}))\leq \ell E_{\ell^{-1}\alpha}+\frac12 V_0\alpha+d_\e.
\]
\end{enumerate}
\end{proposition}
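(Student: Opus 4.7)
The plan is to expand $\Gamma_\e(\gamma_0(\boldsymbol p, \boldsymbol s))$ around the unperturbed test function $\widetilde\gamma(\boldsymbol p):=\sum_{j=1}^\ell(\phi_\e u_0)(\cdot-p_j)$, exploiting the exponential decay of $u_0$ from Lemma \ref{compact}, the well-separation $\xi(\boldsymbol p)\geq L$, and the fact that $(u_0,\ldots,u_0)\in K_\alpha$ (since $u_0$ attains $E_{\ell^{-1}\alpha}$) so $\widetilde\gamma(\boldsymbol p)\in Z_{L,\e}$. Part (ii) is immediate since both the sum and the normalizer $B$ depend symmetrically on $(p_j,s_j)$.

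For (i) and (iii), I would write $\gamma_0-\widetilde\gamma=\sum_j(B\sqrt{\ell s_j}-1)(\phi_\e u_0)(\cdot-p_j)$ and note that exponential decay of $u_0$ and well-separation of the translates give $B=1+O(e^{-cL})+O(\e)$ at $\boldsymbol s=\boldsymbol s^0$, and more generally $|B\sqrt{\ell s_j}-1|\leq C|\boldsymbol s-\boldsymbol s^0|+O(e^{-cL})\leq C\delta+O(e^{-cL})$ on $S$. Choosing $\delta$ (the constant defining $S$) small and $L\geq L_2$ large yields $\|\gamma_0-\widetilde\gamma\|_\e<\rho_0$. Radial symmetry of $u_0$ together with Lemma \ref{lem3.4}(iii) gives $\Upsilon_j(\widetilde\gamma)=p_j$ up to permutation, and Lemma \ref{lem3.4}(ii) then upgrades this to $|\Upsilon_j(\gamma_0)-p_j|\leq 2R_0+D_2C\delta\leq 3R_0$, which is (iii). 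The remaining clause of (i) follows from $\dist(\e\Upsilon_j(\gamma_0),O)\leq\dist(\e p_j,O)+3R_0\e<\delta_0$ for $\e$ small.

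For the energy, I would expand $\Gamma_\e(\gamma_0)$ term by term: cross terms in the quadratic and nonlinear pieces are $O(e^{-c\xi(\boldsymbol p)})$ by exponential decay; moreover $\xi_1(\Upsilon(\gamma_0))\int\chi_{\gamma_0}\gamma_0^2\leq 1$ for $L$ large, hence $\Phi_\e(\gamma_0)=0$; and $|\Psi_\e|=O(e^{-c/\e})$ by Lemma \ref{cor2.3}. On the support of the $j$-th peak, $\e x\in O^{\delta_0+o(1)}\subset\Omega$ so $\widetilde V(\e x)=V(\e x)\leq V_0$, yielding
\begin{equation*}
\Gamma_\e(\gamma_0(\boldsymbol p, \boldsymbol s))\leq \sum_{j=1}^\ell J(B\sqrt{\ell s_j}\,u_0)+\frac{B^2\alpha}{2}\sum_{j=1}^\ell s_j V(\e p_j)+o_\e(1)+O(e^{-cL}).
\end{equation*}
Part (v) then follows since $V(\e p_j)\leq V_0$, $B\to 1$, and $\sum_j J(\sqrt{\ell s_j}u_0)\leq \ell E_{\ell^{-1}\alpha}$ by \eqref{eq 4444}. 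For (iv) I would split $\partial A(L)$ into three faces: on $\boldsymbol s\in\partial S$, the strict inequality in \eqref{eq 4444} (from strict convexity of $F(\sqrt\cdot)$) plus compactness produces a uniform gap $\mathbb J(\boldsymbol v_{\boldsymbol s})\leq \ell E_{\ell^{-1}\alpha}-c_1$; on $\dist(\e p_j,O)=\delta_0$, the inclusion $\mathcal V\subset O$ forces $V<V_0$ on $\overline\Omega\setminus O$, and compactness of $\partial O^{\delta_0}\cap\overline\Omega$ yields $V(\e p_j)\leq V_0-c_2$; on $\xi(\boldsymbol p)=L$, Proposition \ref{prop35} applied to $(u_0,\ldots,u_0)\in K_\alpha$ supplies the interaction saving $CLe^{-\sigma L^2/8}$. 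Setting $2\nu:=\tfrac12\min\{c_1,\,c_2(\ell^{-1}-\delta)\alpha,\,CLe^{-\sigma L^2/8}\}$ (and requiring $\nu<\nu_0$) yields (iv) for $\e$ sufficiently small.

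The principal obstacle is the $\xi(\boldsymbol p)=L$ face: the only available saving is the tiny term $CLe^{-\sigma L^2/8}$ from Proposition \ref{prop35}, and that proposition formally applies only to $\boldsymbol v\in K_\alpha$, which excludes $\boldsymbol v_{\boldsymbol s}=(\sqrt{\ell s_j}u_0)_j$ for $\boldsymbol s\neq\boldsymbol s^0$. The fix is to reuse the proof of Proposition \ref{prop35}—whose key mechanism is the logarithmic singularity in (F5) rather than the $K_\alpha$-property per se—to cover all $\boldsymbol s\in S$, and then to carefully bookkeep the error terms ($o_\e(1)$ from $\phi_\e$ and from the $V(\e x)\approx V(\e p_j)$ approximation, and $O(e^{-c/\e})$ from $\Psi_\e$) so that they stay strictly below $2\nu$, possibly at the cost of an $\e_L$ smaller than the one in \eqref{ep}.
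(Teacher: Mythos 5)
Your proposal is correct and follows essentially the same route as the paper: decompose $\partial A(L)$ into the three faces $\boldsymbol s\in\partial S$, $\dist(\e p_j,O)=\delta_0$, and $\xi(\boldsymbol p)=L$, handle the first two via the strict inequality \eqref{eq 4444} and the strict gap $\sup_{\partial O^{\delta_0}}V<V_0$, and handle the third (as well as part (v), where the sign-indefinite $O(e^{-cL})$ cross-term error in your expansion must in fact be non-positive) via the interaction estimate of Proposition \ref{prop35}. Your observation that Proposition \ref{prop35} as stated only covers $\boldsymbol v\in K_\alpha$ and so must be re-run for $(\sqrt{\ell s_j}u_0)_j$ with $\boldsymbol s\neq\boldsymbol s^0$ is exactly the point the paper acknowledges by invoking ``the proof of Proposition \ref{prop35}'' rather than the statement, and your remark that $\delta$ (in the definition of $S$) must be taken small for the $H_\e$-distance to $Z_{L,\e}$ in part (i) is also a real prerequisite that the paper leaves implicit.
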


\begin{proof}
  (i) follows from the fact that $|B^2-1|\to 0$ uniformly as $L\to\infty$. (ii) and (iii) is clear.
 
To prove (iv), we first note the fact that for large $L>0$, there uniformly holds
$$|\gamma_0(\boldsymbol{p},\boldsymbol{s})|\leq 2\|u_0\|_{L^\infty(\R^N)},\ \ 
(\boldsymbol{p}, \boldsymbol{s})\in A(  L).$$
So by \eqref{e39}, $\bar F(\gamma_0(\boldsymbol{p},\boldsymbol{s}))
=F(\gamma_0(\boldsymbol{p},\boldsymbol{s}))$.
Then we
 consider any sequence $(p(L),s(L))  \in\partial \left(\big(\frac1\varepsilon O^{\delta_0}\big)^\ell\times S\right)$.  
Since $\varepsilon\to 0^+$ as $L\to +\infty$, we have,
 up to a subsequence, $s_j(L)\to s_j$,
$\widetilde V(\varepsilon p_j(L))\to V_j\leq V_0$. 

In the case $(p(L),s(L) )\in\partial \big(\frac1\varepsilon O^{\delta_0}\big)^\ell\times S$,   we have
 $ V_{j_0}\leq \sup V(\partial   O^{\delta_0} )<   V_0$ for some $j_0$.
Therefore,
\begin{equation}\label{eq5.1}
\begin{aligned}
 \limsup_{L\to\infty} \sup_{\varepsilon\in(0, \e_L)}\Gamma_{\varepsilon }(\gamma_0(\boldsymbol{p},\boldsymbol{s}))
&=\mathbb{J}(\sqrt{ {\ell s_1} } u_0,\cdots, \sqrt{ {\ell s_\ell} } u_0)+\sum_{j=1}^{\ell}\frac{V_{j}s_j}2\alpha \\
&\leq \ell E_{\ell^{-1}\alpha}+\frac12 V_0\alpha - \frac{V_{0}-V_{j_0}}{2}s_{j_0}.
\end{aligned}
\end{equation}

When
$(p(L),s(L) )\in  \big(\frac1\varepsilon O^{\delta_0}\big)^\ell\times\partial S$, by \eqref{eq 4444} and similar to \eqref{eq5.1}, 
we have $\limsup_{L\to\infty}  \sup_{\varepsilon\in(0, \e_L)}\Gamma_{\varepsilon }(\gamma_0(\boldsymbol{p},\boldsymbol{s}))<\ell E_{\ell^{-1}\alpha}+\frac12 V_0\alpha$.

Lastly, if $\xi(p)= L$,   setting $u =\gamma_{0}(p,s) $,
we have 
$$\int_{\R^{N} }\chi_u  u^2   \mathrm dx\leq Ce^{-c \xi_1(\Upsilon(u))^2},
$$
for some $C, c>0$ independent of $L, \e$. Then $\Phi_\e(u)=0$ for large $L$. On the other hand, by Corollary \ref{cor2.3}  and
 \eqref{ep}, $\sup_{H_\e}|\Psi_\e|\leq Ce^{-cL^4}$  for some $C, c>0$ independent of $L, \e$. 
Then by the proof of Proposition \ref{prop35},
$\Gamma_{\varepsilon }(\gamma_0(p,s))
\leq  \ell E_{\ell^{-1}\alpha}+\frac12 V_0\alpha-C (L)$ when $L$ is large.

(v) follows from Proposition \ref{prop35} as well.
\end{proof}

As in \cite{Byeon-Tanaka}, we define   an equivalence relation  $\approx$ in $(\R^N)^\ell \times S$ as follows:
\[
(\boldsymbol{p}_1,\cdots \boldsymbol{p}_\ell, s_1,\cdots, s_\ell)  \approx (\boldsymbol{p}'_1,\cdots \boldsymbol{p}'_\ell, s'_1,\cdots, s'_\ell)
\]
if and only if there is a permutation $\sigma$ of $\set{1,\cdots,\ell}$ such that 
$p_{j}=p'_{\sigma(j)}$ and $s_j=s_{\sigma(j)}$ for $j=1,\cdots,\ell$.

Fixing $x_0\in O$, we set
\[
p_j^\varepsilon=\frac{1}{\varepsilon}(x_0+4\sqrt{\varepsilon}(j-1)e_0)\  \ \text{with\ }e_0=(1,0,\cdot\cdot\cdot),
\]
and
$$Q^\varepsilon:=\big[(p_1^\varepsilon,\cdot\cdot\cdot,p_{\ell}^\varepsilon, \ell^{-1},\cdot\cdot\cdot,\ell^{-1})\big]
\in((\R^N)^\ell\times S)/\approx.$$

Define a map $\mathcal F: Z(3\rho_0, 3\delta_0 ) \to ((\R^N)^\ell\times S)/\approx$
$$\mathcal F(u)=\left[\Upsilon_1(u),\cdots,\Upsilon_\ell(u),N_{1,\xi(\Upsilon(u))/4}(u),\cdots,N_{\ell, \xi(\Upsilon(u))/4}(u)\right],
$$
where
$$N_{j,t}(u)= \frac{\int_{B(\Upsilon_j(u), t)}  u^2 }{ \int_{\cup_{j=1}^\ell B(\Upsilon_j(u), t)}u^2} .
$$

By Proposition \ref{pro5.2} (ii) $\mathcal F\circ \gamma_{0}$ can be  considered as a map from  $A(L)/\approx$ to $((\R^N)^\ell\times S)/\approx$.
  
\begin{proposition}\label{pro5.4}
There is $ L_3>L_2$ such that for each $L\geq  L_3$, there hold
$$\deg(\mathcal F\circ \gamma_{0}, A(L)/\approx, Q^\varepsilon)=1.
$$
\end{proposition}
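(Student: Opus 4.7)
The strategy is to construct an admissible homotopy from $\mathcal{F}\circ\gamma_0$ to the quotient identity map on $A(L)/\approx$, then invoke the homotopy invariance of Brouwer degree. Because $\xi(\boldsymbol{p})\geq L$ throughout $A(L)$, the symmetric-group action on $A(L)$ is free, so $A(L)/\approx$ is a smooth manifold with corners of the appropriate dimension on which the degree is well defined; $Q^\varepsilon$ lies in its interior for small $\varepsilon$, since $\xi(\boldsymbol{p}^\varepsilon)=4/\sqrt{\varepsilon}\gg L$ and each $p_j^\varepsilon$ lies in $\frac{1}{\varepsilon}O$.

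The key quantitative input is that $\mathcal{F}\circ\gamma_0$ is close to the identity when $L$ is large. By Proposition \ref{pro5.2}(iii), for each $(\boldsymbol{p},\boldsymbol{s})\in A(L)$ there is a unique permutation $\sigma$ (uniqueness comes from $\xi(\boldsymbol{p})\geq L\gg R_0$) such that $|\Upsilon_{\sigma(j)}(\gamma_0(\boldsymbol{p},\boldsymbol{s}))-p_j|\leq 3R_0$. With the same labeling, I will show that $N_{\sigma(j),\xi(\Upsilon(u))/4}(\gamma_0(\boldsymbol{p},\boldsymbol{s}))=s_j+o_L(1)$ uniformly on $A(L)$ as $L\to\infty$. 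This rests on three observations: the balls $B(\Upsilon_j(u),\xi(\Upsilon(u))/4)$ are pairwise disjoint since $|\Upsilon_i-\Upsilon_j|\geq \xi(\Upsilon(u))$; by the exponential decay of $u_0$ from Lemma \ref{compact}, together with $|\Upsilon_{\sigma(j)}-p_j|\leq 3R_0$ and $\xi(\Upsilon(u))\to\infty$, each ball captures essentially all of the corresponding bump's mass while the tails of the other bumps contribute exponentially little; finally, the normalizing factor satisfies $B^2\to 1$ as $L\to\infty$ by the same decay argument applied to $\big|\sum_j\sqrt{\ell s_j}(\phi_\varepsilon u_0)(\cdot-p_j)\big|_2^2$.

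With this in hand, I define the homotopy $H_\tau:A(L)/\approx\,\to\,((\R^N)^\ell\times S)/\approx$ by taking the straight-line interpolation between $(\Upsilon_{\sigma(\cdot)}(\gamma_0),N_{\sigma(\cdot),\xi/4}(\gamma_0))$ and $(\boldsymbol{p},\boldsymbol{s})$, with $\sigma$ as above. This descends to the quotient because the nearest-neighbor pairing is equivariant under $S_\ell$. Admissibility $H_\tau([\boldsymbol{p},\boldsymbol{s}])\neq Q^\varepsilon$ for $[\boldsymbol{p},\boldsymbol{s}]\in\partial(A(L)/\approx)$ is verified case by case. If some $p_j\in\partial(\tfrac{1}{\varepsilon}O^{\delta_0})$, then the $j$-th spatial component of $H_\tau$ stays within $3R_0$ of $p_j$, hence at distance $\geq \delta_0/\varepsilon-3R_0$ from every $p_k^\varepsilon\in\tfrac{1}{\varepsilon}O$, which is positive for small $\varepsilon$. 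If $\boldsymbol{s}\in\partial S$, some $s_j=\ell^{-1}\pm\delta$ with $\delta$ fixed, while the corresponding component of $H_\tau$ equals $s_j+(1-\tau)o_L(1)$, which for large $L$ stays bounded away from $\ell^{-1}$. If $\xi(\boldsymbol{p})=L$, then $|p_i-p_j|=L$ for some $i\neq j$, whereas $|p_k^\varepsilon-p_{k'}^\varepsilon|\geq 4/\sqrt\varepsilon\gg L$ by $\varepsilon<\varepsilon_L=(\delta/(4L))^4$, and the spatial components of $H_\tau$ preserve pairwise distances to within $6R_0$, precluding a match with $Q^\varepsilon$ under any permutation.

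Homotopy invariance then gives $\deg(\mathcal{F}\circ\gamma_0,A(L)/\approx,Q^\varepsilon)=\deg(\mathrm{id},A(L)/\approx,Q^\varepsilon)=1$. The main technical obstacle I anticipate is making the labeling $\sigma$ continuous on the quotient so that the straight-line interpolation is a well-defined continuous map; I expect this to follow from the unique-pairing property (valid because $\xi(\boldsymbol{p})\geq L\gg R_0$) together with the equivariance of $\Upsilon$ and $N$ under permutations of coordinates, which lets the construction be built in a local fundamental domain and patched together.
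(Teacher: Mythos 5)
Your proposal is correct and follows essentially the same route as the paper: both take the straight-line homotopy on $((\R^N)^\ell\times S)/\approx$ between $\mathcal{F}\circ\gamma_0$ and the identity, and both verify admissibility by splitting $\partial A(L)$ into the three cases $\xi(\boldsymbol p)=L$, $p_j\in\partial(\tfrac1\e O^{\delta_0})$, and $\boldsymbol s\in\partial S$, using exactly the same ingredients (Proposition~\ref{pro5.2}(iii) for the $3R_0$-proximity of $\Upsilon_j$ to $p_j$, the Gaussian decay of $u_0$ for $N_{j,\xi/4}\to s_j$, and the separation $4/\sqrt\e\gg L$ from the choice $\e<\e_L$). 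The only difference is cosmetic: you explicitly flag and address the well-definedness of the nearest-neighbor labeling $\sigma$ on the quotient, which the paper passes over silently, but your resolution is the same as what the paper implicitly relies on.
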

\begin{proof}
  We show that if $L$ is sufficiently large
   \begin{equation}\label{Qe}
    Q^\varepsilon\neq (1-t)[(\boldsymbol p, \boldsymbol s)] + t \mathcal F\circ \gamma_{0}(\boldsymbol p, \boldsymbol s) 
  \end{equation}
   for any $t\in[0,1]$ and 
$(\boldsymbol p, \boldsymbol s)\in \partial A(L)$.
 For $(\boldsymbol p, \boldsymbol s)\in \partial A(L)$, one of the following take   place.
 $$\text{(i)}\ |p_i-p_j|= L\ \text{for some }i\neq j;
 \ \ \ \ \
 \text{(ii)}\ p_j\in\partial \big(\frac{1}{\varepsilon} O^{\delta_0}\big)\ \text{for some }j;
 \ \ \ \ \
 \text{(iii)}\ \boldsymbol s\in \partial S.$$
 
If (i) or (ii) happens, 
by Proposition \ref{pro5.2} (iii), we have 
$\xi((1-t)\boldsymbol p +t\Upsilon(\gamma_{0}(\boldsymbol p, \boldsymbol s)) ) \leq 2L < \frac{4}{\sqrt\e}$ or 
$\dist(\e (1-t) p_j +\e t\Upsilon_j(\gamma_{0}(\boldsymbol p, \boldsymbol s)), x_0) \geq  \delta/2 >  4\ell \sqrt\e$. Hence,
\eqref{Qe} holds.
On the other hand, if (iii) hold,
   by
   $\xi(\Upsilon(u))\geq \xi(\boldsymbol p)-2R_0\geq  L/2$ and the decay estimate for $u_0$, there holds
   \[\lim_{L\to\infty}|N_{j, \xi(\Upsilon(u))/4}( \gamma_0(\boldsymbol p,\boldsymbol s))-s_j| 
=0.\]
Therefore, we can also get \eqref{Qe}. 
\end{proof}

\begin{lemma}\label{lem5.5}
For fixed $L\geq  L_3$, there holds  
 \[
  \liminf_{\varepsilon\to 0}\inf\{\Gamma_{\varepsilon}(u)\ |\ u\in Z (3\rho_0, 3\delta_0),\ \mathcal F(u)=Q^\varepsilon\}\geq\ell E_{\ell^{-1}\alpha}+\frac12 V_0\alpha.
 \]
\end{lemma}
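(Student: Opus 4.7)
The plan is to argue by contradiction: suppose there exist $\varepsilon_n\to 0^+$ and $u_n\in Z(3\rho_0,3\delta_0)$ with $\mathcal F(u_n)=Q^{\varepsilon_n}$ and $\limsup_n \Gamma_{\varepsilon_n}(u_n)<\ell E_{\ell^{-1}\alpha}+\tfrac12 V_0\alpha$. From $\mathcal F(u_n)=Q^{\varepsilon_n}$, after a permutation one has $\Upsilon_j(u_n)=p_j^{\varepsilon_n}$ and $N_{j,\xi(\Upsilon(u_n))/4}(u_n)=\ell^{-1}$; in particular $\xi(\Upsilon(u_n))=4/\sqrt{\varepsilon_n}\to\infty$ and $\varepsilon_n\Upsilon_j(u_n)\to x_0$ (where $x_0\in\mathcal V$ is the anchor used to set up $Q^{\varepsilon}$, so $V(x_0)=V_0$).

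The first step is an $L^2$--tail decay estimate for $u_n$. The energy bound together with Lemma \ref{lem3.12} gives $\Phi_{\varepsilon_n}(u_n)\le C$, which combined with $\xi_1(\Upsilon(u_n))\to\infty$ and the definition of $\Phi_\varepsilon$ yields
\[
\int_{\R^N\setminus\cup_{j=1}^\ell B(\Upsilon_j(u_n),\,\xi_1(\Upsilon(u_n))/5)} u_n^2 \to 0.
\]
Together with $|u_n|_2^2=\alpha$ and the mass--balance $N_{j,\xi(\Upsilon)/4}(u_n)=\ell^{-1}$, this forces $\int_{B(\Upsilon_j(u_n),r)} u_n^2\to\alpha/\ell$ uniformly for $r$ between $\xi_1/5$ and $\xi/4$.

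The second step is an IMS--type localization: pick smooth cutoffs $\chi_j$ with $\chi_j=1$ on $B(\Upsilon_j(u_n),\xi_1/5)$, $\supp\chi_j\subset B(\Upsilon_j(u_n),2\xi_1/5)$ (pairwise disjoint), and $|\nabla\chi_j|\le C/\xi_1$; set $u_n^j:=\chi_j u_n$. By expanding $|\nabla(\chi_j u_n)|^2$ and using $\sum_j\chi_j^2\le 1$ together with Cauchy--Schwarz on the cross term (controlled by $|\nabla\chi_j|\to 0$), one gets $\int|\nabla u_n|^2\ge\sum_j\int|\nabla u_n^j|^2+o(1)$. Using the uniform convergence $\widetilde V(\varepsilon_n x)\to V_0$ on $\cup_j\supp\chi_j$ (since $|\varepsilon_n(x-\Upsilon_j(u_n))|\le 2\sqrt{\varepsilon_n}/5\to 0$) and $\widetilde V\ge 1$ on the exterior, one gets $\int\widetilde V_{\varepsilon_n}u_n^2\ge V_0\alpha+o(1)$. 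For the nonlinear term, the monotonicity of $F_1$ and $\bar F_2$ in $|t|$ implies the pointwise inequalities $\sum_j F_1(u_n^j)\le F_1(u_n)$ and $\bar F_2(u_n^j)\le\bar F_2(u_n)$, so $\int\bar F(u_n)-\sum_j\int\bar F(u_n^j)$ reduces to an annulus/exterior contribution coming only from $\bar F_2$, which is controlled by $|\bar F_2(t)|\le C|t|^{2+4/N}$ combined with Gagliardo--Nirenberg on the region where $|u_n|_{L^2}\to 0$; this yields $\int\bar F(u_n)\le\sum_j\int\bar F(u_n^j)+o(1)$.

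Combining these estimates with $\bar F\le F$ (from $\bar F_2\le F_2$), $\Psi_{\varepsilon_n}=O(e^{-c/\varepsilon_n})$, $\Phi_{\varepsilon_n}\ge 0$, and the tautological lower bound $J(u_n^j)\ge E_{|u_n^j|_2^2}$, the continuity of $\alpha\mapsto E_\alpha$ (Theorem \ref{them1.1}) gives
\[
\liminf_{n\to\infty}\Gamma_{\varepsilon_n}(u_n)\ge\sum_{j=1}^\ell E_{|u_n^j|_2^2}+\tfrac12 V_0\alpha+o(1)=\ell E_{\ell^{-1}\alpha}+\tfrac12 V_0\alpha,
\]
the desired contradiction. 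The hardest step will be the nonlinear term: the singular piece $F_1$ satisfies $F_1(t)/t^2\to+\infty$ as $t\to 0$ by \emph{(F2)} and so cannot be controlled on low--$L^2$ regions by the usual $L^p$--interpolation; the rescue is the monotonicity of $F_1$ in $|t|$, which yields the right--sign pointwise bound for free and lets us discard the annulus/exterior contribution without loss, leaving only the subcritical $\bar F_2$-piece to be handled by Gagliardo--Nirenberg.
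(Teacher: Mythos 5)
Your proof is correct and follows essentially the same route as the paper's: both exploit the $\Phi_\varepsilon$-penalization for $L^2$-tail decay outside $\cup_j B(p_j^{\varepsilon},\varepsilon^{-1/2})$, localize with disjointly supported cutoffs near each $p_j^{\varepsilon}$, use monotonicity of $F_1$ to discard the singular piece and Gagliardo--Nirenberg for the $\bar F_2$-exterior contribution, exploit $N_{j,\xi/4}(u)=\ell^{-1}$ to split the mass evenly, and conclude by $J(u^j)\ge E_{|u^j|_2^2}$ together with the continuity of $\alpha\mapsto E_\alpha$. The only small point worth noting is that you correctly make explicit the tacit choice $x_0\in\mathcal V$ (so $V(x_0)=V_0$), which the paper needs for the potential term to yield $\tfrac12V_0\alpha$; otherwise there is no substantive difference.
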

\begin{proof}
For $u $ such that $\mathcal F(u)=Q^\varepsilon$, we have by Lemma \ref{lem1bound},
$\xi(\Upsilon(u)) =\xi(p_1^\varepsilon,\cdot\cdot\cdot,p_{\ell }^\varepsilon)=\frac{4}{\sqrt{\varepsilon}}$.
Note that if $\Gamma_{\varepsilon}(u)\leq \ell E_{\ell^{-1}\alpha}+\frac12 V_0\alpha+1$, we have 
$$ \lim_{\varepsilon\to0}\sum_{j=1}^\ell\int_{\R^{N}\setminus B(p_j^\e,\varepsilon^{-\frac12}) }   u^2 \mathrm dx=0.
$$
By Gagliardo--Nirenberg inequality,
$$ \lim_{\varepsilon\to0} \sum_{j=1}^\ell\int_{\R^{N}\setminus B(p_j^\e,\varepsilon^{-\frac12}) }   \bar F_2(u) \mathrm dx=0  .
$$
Take $\zeta_\e \in C_0^\infty(\R^N,[0,1])$ such that $\zeta_\e =1$ in $B(0,\varepsilon^{-\frac12})$, $\zeta_\e=0$ in $\R^N\setminus B(0,2\e^{-\frac12})$ and $|\nabla\zeta_\e|\leq 10\e^\frac12$.
We have
$$ \lim_{\e\to0}\|\zeta_\e(\cdot-p_j^\e)u-u\|_{L^2(B(p_j^\e,\xi(\Upsilon(u))/4))}^2=0,\quad 
\lim_{\varepsilon\to0}N_{j, \xi(\Upsilon(u))/4}(u)=\ell^{-1}.$$
Moreover,
\[\begin{aligned}
  \int_{B(p_j^\e,\xi(\Upsilon(u))/4)}  |\nabla(\zeta_\e(\cdot-p_j^\e)u)|^2=&\int_{\R^N}|\nabla \zeta_\e(\cdot-p_j^\e)|^2 u^2+\nabla \zeta_\e(\cdot-p_j^\e)\nabla u  \zeta_\e(\cdot-p_j^\e) u +  \zeta_\e(\cdot-p_j^\e) |\nabla u|^2\\
  \leq & \int_{B(p_j^\e,\xi(\Upsilon(u))/4)}  |\nabla u|^2 +o_\varepsilon(1),\\
   \int_{B(p_j^\e,\xi(\Upsilon(u))/4)}F_1(\zeta_\e(\cdot-p_j^\e)u)\leq& \int_{B(p_j^\e,\xi(\Upsilon(u))/4)} F_1(u).
\end{aligned}
\]
Then, we have
\begin{align*}
  \liminf_{\varepsilon\to0}\Gamma_{\varepsilon}(u)
\geq &\liminf_{\varepsilon\to0}\sum_{j=1}^{\ell}\Gamma_{\varepsilon}(\zeta_\e(\cdot-p_j^\e)u)\\
\geq&\liminf_{\varepsilon\to0}\sum_{j=1}^{\ell}J(\zeta_\e(\cdot-p_j^\e)u) + \frac{V_0}{2}\alpha=\ell E_{\ell^{-1}\alpha}+\frac12 V_0\alpha. \qedhere
\end{align*} 
\end{proof}

\begin{proof}[\bf Proof of the existence of critical point of $\Gamma_\e$]
By Proposition \ref{pro5.2} (v), there holds
\[\label{5.9}
 \max_{(p,s)\in A(L)}\Gamma_{\varepsilon}(\gamma_{0}(\boldsymbol p, \boldsymbol s))\leq  \ell E_{\ell^{-1}\alpha}+\frac12 V_0\alpha +d_\e.
\]
By Proposition \ref{pro5.2} (iv), there exists $\nu\in(0,\nu_0)$ such that 
\[\label{5.11}
\max_{(p,s)\in\partial A(L)}\Gamma_{\varepsilon}(\gamma_{0}(\boldsymbol p, \boldsymbol s))\leq \ell E_{\ell^{-1}\alpha}+\frac12 V_0\alpha-2\nu.
\]
If assumption (A) holds. From Lemma \ref{lemma flow},  
\begin{equation}\label{5.12}
\Gamma_{\varepsilon}(\gamma_1(\boldsymbol p, \boldsymbol s)) \leq \ell E_{\ell^{-1}\alpha}+\frac12 V_0\alpha-\nu,\ \ \ \
(p,s)\in A(L),
\end{equation}
where $ \gamma_1(p,s):= \eta(t_\e, \gamma_{0}(\boldsymbol p, \boldsymbol s))$ satisfying $ \gamma_1= \gamma_{0}$ on $\partial A(L)$.  
On the other hand, by Proposition \ref{pro5.4},
$$\deg(\mathcal F\circ  \gamma_1, A(L)/\approx,Q^\varepsilon)=
\deg(\mathcal F\circ\gamma_{0}, A(L)/\approx,Q^\varepsilon)\neq 0,$$
which means that $\mathcal F(\gamma_1(\boldsymbol p_\varepsilon, \boldsymbol s_\varepsilon))=Q^\varepsilon$ for some
$(\boldsymbol p_\varepsilon, \boldsymbol s_\varepsilon)\in A(L)$. By Lemma \ref{lem5.5}, 
$$\liminf_{\varepsilon\to 0}\Gamma_{\varepsilon} ( \gamma_{1}( \boldsymbol p_\varepsilon, \boldsymbol s_\varepsilon))
\geq\ell E_{\ell^{-1}\alpha}+\frac12 V_0\alpha,$$
which contradicts to \eqref{5.12}.  
\end{proof}

\section{Completion of the proof for Theorem \ref{th1.1}}
For each $i$,
we choose  a decreasing sequence  of positive numbers $\{\delta_i\}$, and  a sequence of open sets $\{O_i\}$ such that  $\delta_i\to 0$,   and 
$$O_{i+1}\subset O_{i},\quad  \bigcap_{i=1}^\infty O_i=\mathcal V,\quad  \inf_{O_i^{3\delta_i}\setminus O_i^{\delta_i}}|\nabla V|\geq\tilde\nu_i>0.$$
Then for each $i$, there exist  positive $\nu_i\to0$,
 and positive decreasing $\varepsilon_i\to 0$ such that $\Gamma_\varepsilon$ has a nontrivial critical point 
$(\lambda_{\e,i},u_{\varepsilon,i})\in \mathbb R^N\times {Z (3\rho_0, 3\delta_i )} 
\cap   [ \Gamma_{\varepsilon}\leq \ell E_{\ell^{-1}\alpha}+\frac12 V_0\alpha+2\nu_i]$  when $\varepsilon\in(0,\varepsilon_i)$. Define
$$(\lambda_\e,u_\e)=(\lambda_{\e,i},u_{\e,i}) \ \ \text{for\ }\e\in[\e_{i+1},\e_{i}).
$$
Then for any subsequence of $\varepsilon\to0$,  $u_\varepsilon$ satisfies the assumption of  Proposition \ref{pro2.6}, because
$Z (3\rho_0,\delta_i)\subset Z (  3\rho_0, \delta_0)$ for each $i$.
We have also that $ \e\Upsilon_j(u_\e)\in O_i^{3\delta_i}$ if $\e\in [\e_{i+1},\e_{i})$, $j=1,\cdots,\ell$.
Applying Proposition \ref{pro2.6} to $u_\varepsilon$, Then there exist
 $\boldsymbol{U}\in K_{\alpha}$ and 
 $\left(z_{\e,j} \right) \subset \R^N, j= 1,2, \cdots, \ell$ such that as $\e \to 0$ (after extracting a subsequence if necessary)
 \begin{enumerate}
      \item $ | z_{\e,j}-\Upsilon_j(u_\e)|\leq 2  R_0$ for $j=1,2, \cdots, \ell$,
\item $\|u_{\e}- \sum_{j=1}^\ell U_j (\cdot-z_{\e,j} )\|_{ {\varepsilon}}\to 0$, 
where $U_j$ is the $j$-th component of $\boldsymbol{U}$.
 \end{enumerate}
Then necessarily, for $j=1,\cdots,\ell$,
  \begin{equation}\label{59}
  \dist(\varepsilon z_{\e,j}, O_i^{3\delta_i})\leq \varepsilon|z_{\e,j}-\Upsilon_j(u_\e)|\leq 2R_0\e,\quad \e\in[\e_{i+1},\e_{i}).
  \end{equation}
By the choice of $O_i$ and $\delta_i$, we have 
$\dist(\e z_{\e,j}, \mathcal V)\to 0$ as $\e\to 0$ for $j=1,\cdots,\ell$. Hence, $\boldsymbol{U}$ is a solution to system \eqref{eqsystem} with $\lambda_\e\to\lambda+V	_0$.

By Corollary \ref{lem2.8}, we can conclude that $\Phi_\e(u_\varepsilon)=0$, $\Phi_\e'(u_\varepsilon)=0$.
Hence, $u_\e$ weakly solves
$$-\Delta u+\widetilde V_\e u+\overline V_\e 
T(x,u) u =\bar f(u)+\lambda_\e u,$$
where 
$$T(x,u)=H( e^{\e |x|^2} u)+ \frac12H'( e^{\e |x|^2} |u|)e^{\e |x|^2} |u|,\ \ \ |\lambda_\e|\leq C.$$ 
By Kato's inequality and (F5), for constant $C>0$ independent of $\varepsilon$, $|u_\e|$ weakly solves
$$-\Delta u+\widetilde V_\e u+\overline V_\e 
T(x,u) u \leq \frac12\sigma u\log u+ Cu^{p-1},\ \ \text{for some } p\in(2,2^*).$$
Since $\widetilde V_\e\geq1$, $\overline V_\e\leq 0$,
and $H'(t)\leq 0$ for $t\geq 0$, we have $|u_\varepsilon|$ solves
$$-\Delta u+ u+\overline V_\e H( e^{\e |x|^2} u)u \leq\frac12 \sigma u\log u+Cu^{p-1},\ \ \text{for some } p\in(2,2^*).$$
By this and 
 a   comparison argument (see   \cite[Remark 2.4 (i), Corollary 2.7, Proposition 3.3]{zhang1}),
we have 
$$|u_\varepsilon(x)|\leq C\sum_{j=1}^\ell e^{-c\varepsilon^{-2}|x-z_{\e,j}|^2}, \quad \text{for some}\ C,c>0\ \text{independent of}\ \varepsilon.
$$
Therefore, by \eqref{psi'} and \eqref{59}, $u_\e$ solves
$-\Delta u+ V_\e u=\bar f(u)+\lambda_\e u.$
 By Lemma \ref{lem3.12}, 
 $\|u_\e\|\leq C_0$ and $|\lambda_\e|\leq C_0$.
Since $V(x)\geq 1$ on $B(0, M_0)$, we apply Remark \ref{re3.12} to $|u_\e|$ in $B(0,\e^{-1}M_0)$, and obtain that $|u_\e(x)|\leq K_0$ for $x\in B(0,\e^{-1}M_0-1/2)$.
While   $|u_\e(x)|\leq Ce^{-c\e^{-2}}\leq K_0$, 
$x\not\in B(0,\e^{-1}M_0-1/2)$, for small $\e>0$. Thus $\bar f(u_\e)=f(u_\e)$. 
By Lemma \ref{L2} and the choice of $\rho_1$, $u_\e\ge 0$.
Hence $u_\e$ solves the original problem.
At last, $u_\e>0$ by the maximum principle \cite{Va}. 
 
\section{Appendix}
 
\subsection{Symmetry and decay properties of the autonomous problem}
 
\begin{lemma}\label{decay}
  Assume (F1),   (F4) and (F5).
  Let $u\in H^1(\R^N)$ and $\lambda\in\R$ satisfy
  \[-\Delta u=f(u)+\lambda u,\quad  u\geq 0,\quad u\not\equiv 0.\]
  Then  $u\in C^2(\R^N)$, $u>0$ in $\R^N$, and it is radially symmetric about some point.
 Moreover, if  $|\lambda|+\|u\|_{H^1}\leq C_0$, then 
   there is $C_1>0$, $C_2>0$ independent of $\lambda$ such that 
  \begin{equation}\label{eqAAAA}
    C_1e^{\frac\lambda\sigma}e^{-\frac{\sigma}4r^2}\leq u(r)\leq C_2e^{\frac\lambda\sigma} e^{-\frac{\sigma}4r^2},
  \end{equation}
  and 
  \begin{equation}\label{eqBBB}
  \frac{u'}{r{u}}\to -\frac\sigma2\quad {as}\ r\to+\infty.
  \end{equation}
\end{lemma}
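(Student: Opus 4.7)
My approach splits into three phases: (i) regularity, positivity and radial symmetry; (ii) the two-sided Gaussian bound via explicit sub/super-solutions; and (iii) the asymptotic $u'/(ru) \to -\sigma/2$. Phase (i) is mostly standard. From (F1)--(F3), $f(u)$ lies in $L^p_{\mathrm{loc}}(\R^N)$ for every finite $p$, so elliptic bootstrap gives $u \in C^{1,\alpha}_{\mathrm{loc}}$, and the $C^1$-regularity of $f$ on $(0,\infty)$ from (F5) upgrades this to $u \in C^2$ on the open set $\{u > 0\}$. For strict positivity I use the decomposition $f = -f_1 + f_2$: since $f_1(u)\geq 0$, one has $-\Delta u \leq f_2(u) + \lambda u \leq C u$ on any set where $u$ is bounded, so the strong maximum principle forces $u > 0$ throughout $\R^N$, whence $u \in C^2(\R^N)$. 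For symmetry, I first show $u(x) \to 0$ at infinity using $u \in H^1\cap L^\infty$ together with a sub-solution estimate on annuli; then the Gidas--Ni--Nirenberg moving-plane method applies, since on $\{u > 0\}$ the nonlinearity $f + \lambda \cdot$ is $C^1$. After translation we may assume $u = u(r)$ with $u'(r) \leq 0$.

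\emph{Two-sided Gaussian bounds.} Introduce $v(r) = -\log u(r)$ in the radial ODE to obtain
\[
v'' + \tfrac{N-1}{r} v' - (v')^2 = -\frac{f(u)}{u} - \lambda.
\]
By integrating (F5), $f(s)/s = \sigma \log s + O(1)$ as $s \to 0^+$, hence the right-hand side equals $\sigma v - \lambda + O(1)$ as $r \to \infty$. The dominant balance $(v')^2 \sim \sigma v$ forces $v(r) \sim \sigma r^2/4$. To convert this heuristic into \eqref{eqAAAA} with the stated $\lambda$-dependent prefactor, I will construct explicit barriers
\[
\bar u_\pm(r) = A_\pm\, e^{\lambda/\sigma}\, e^{-\sigma r^2/4 \mp g_\pm(r)},
\]
with $g_\pm$ bounded or logarithmic corrections chosen so that direct computation of $-\Delta \bar u_\pm - f(\bar u_\pm) - \lambda \bar u_\pm$ has the correct sign on $\{r \geq R\}$ for $R$ large. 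Because (F4) makes $s \mapsto f(s)/s$ strictly increasing, the difference $w = u - \bar u_\pm$ satisfies a linear elliptic inequality in the exterior region with a one-sided lower order coefficient, to which the maximum principle applies once the boundary comparison on $\partial B(0,R)$ is arranged by choosing $A_\pm$.

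\emph{Asymptotic of $u'/(r u)$.} From \eqref{eqAAAA} we read off $v(r) = \sigma r^2/4 + O(r)$. Inserting this in the ODE for $v$ gives $(v')^2 = \sigma v + v'' + \tfrac{N-1}{r} v' + O(1)$. Dividing by $v \to \infty$ and using the leading order in $r$ yields $(v'/r)^2 \to \sigma^2/4$. Since $v' > 0$ for large $r$, we obtain $v'(r)/r \to \sigma/2$, and $u'/u = -v'$ translates into \eqref{eqBBB}.

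\textbf{Main obstacle.} The delicate step is the sub-solution argument in phase (ii). The Riccati-type ODE for $v$ is quadratic in $v'$, and the $O(1)$ error in (F5) is precisely the order of the correction needed to capture the $e^{\lambda/\sigma}$ prefactor sharply; a naive ansatz $g_- \equiv \mathrm{const}$ may fail to satisfy the required differential inequality uniformly in $\lambda$ with $|\lambda| \leq C_0$. I expect to resolve this by iteration: first prove a rough upper bound $u(r) \leq C e^{-\sigma r^2/5}$ to ensure that $u$ is actually in the regime where (F5) is effective; then feed this upper decay into a refined barrier construction whose correction $g_-$ is tuned to the observed error, upgrading to the constants in \eqref{eqAAAA}. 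The uniformity of the constants $C_1, C_2$ in the a priori bound $|\lambda| + \|u\|_{H^1} \leq C_0$ is preserved throughout because every comparison step only invokes the $H^1$-norm through the initial ``outer threshold'' $R$.
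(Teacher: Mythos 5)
Your three-phase outline follows the same general road map as the paper: positivity and symmetry by moving planes adapted to the nonlipschitzian $f$, a two-sided Gaussian bound by comparison with the explicit solution of the pure logarithmic equation (the paper compares with $v=e^{a/\sigma+N/2}e^{-\sigma|x|^2/4}$, which is your $\bar u_\pm$ with $g_\pm\equiv 0$ and $A_\pm$ suitably chosen), and then the asymptotic $u'/(ru)\to -\sigma/2$. However, there is a genuine gap in your phase (iii), and phase (ii) as written is a plan rather than a proof.

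The serious problem is in (iii). After writing the Riccati equation you assert that ``dividing by $v\to\infty$ and using the leading order in $r$ yields $(v'/r)^2\to\sigma^2/4$.'' Dividing by $v$ gives
$(v')^2/v=\sigma+v''/v+\tfrac{N-1}{r}v'/v+O(1)/v$,
and to conclude anything you need $v''/v\to 0$. But $v''=(v')^2-\tfrac{N-1}{r}v'-\sigma v+O(1)$, so $v''=o(v)$ is \emph{equivalent} to $(v')^2\sim\sigma v$, which is the statement you are trying to prove; the naive division is circular. The two-sided bound \eqref{eqAAAA} gives $v=\sigma r^2/4+O(1)$ (not $O(r)$, by the way) but says nothing about $v''$. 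The paper sidesteps the circularity by passing to the first-order quantity $z=-u'/(ru)$, which satisfies $z'=r(z^2-\sigma^2/4)-Nr^{-1}z+O(r^{-1})$, and then running a trap/escape argument in the $(r,z)$ half-plane: if $z$ ever exceeds $\sigma/(2(1-\tau))$ for large $r$, then $z'\geq z^2$ and $z$ blows up in finite $r$; if $z$ ever drops below $\sigma(1-\tau)/2$, then $z'\leq -1$ and $z$ hits zero. Both contradict global existence and positivity of $z$, so $z$ is eventually trapped in the band $[\sigma(1-\tau)/2,\sigma/(2(1-\tau))]$ for every $\tau$. If you want to stay with $v$, you would need an independent ODE estimate on $v''$ (or a monotonicity/trap argument for $v'/r$), which brings you back to essentially the paper's computation.

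Two smaller points. In phase (ii) you say ``I expect to resolve this by iteration''; that is a heuristic, and to close it you still need to pin down barriers on which $-\Delta\bar u_\pm-f(\bar u_\pm)-\lambda\bar u_\pm$ has the right sign — the paper simply observes that $e^{a/\sigma+N/2}e^{-\sigma|x|^2/4}$ solves $-\Delta v=\sigma v\log|v|+av$ exactly, and (F5) together with (F4) then makes these into sub/super-solutions of your equation for a suitable choice of $a$; this is cleaner than your iterative scheme. In phase (i) you write that GNN applies ``since on $\{u>0\}$ the nonlinearity is $C^1$''; that alone is not enough, because $u\to 0$ at infinity and $f'(u)\to -\infty$ along the way, so the Lipschitz hypothesis in the standard moving-plane framework is violated in every exterior region. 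The fix is that the unboundedness is of the favorable sign: by (F5) there is $\tau>0$ with $f'(s)<-\lambda-1$ on $(0,\tau)$, and this is exactly what the paper exploits in its Step~1 (no interior negative minimum of $w_t$ can occur in $\Sigma_t\setminus B_R$). You should make this explicit rather than citing GNN as a black box.
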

\begin{proof}
  By the maximum principle \cite{Va}, we have $u>0$.
  It is clear that $u\in C^2$ and $|u(x)|+|\nabla u(x)|\to 0$ as $|x|\to \infty$.
To show radial symmetry, we apply moving plane arguments. (See e.g. \cite{GNN, dAvenia2014, Zhang2}.)
Denote $x=(x_1,x')$, and
for $t\in\R$, set 
$$\Sigma_t=\set{x\in\R^N|x_1<t},
$$
$$x_t=(2t-x_1,x'),\quad u_t(x)=u(x_t),\quad w_t=u_t-u.
$$
Then in $\Sigma_{t}$, we have
\begin{equation}\label{wl}
  -\Delta w_t = \lambda w_t
	+f(u_t)-f(u). 
\end{equation}

\textbf{Step 1.} 
By (F5), there is $\tau>0$ such that $f'(s)<-\lambda-1$ for $s\in(0, \tau)$.
Take 
$R>1$ such that $u(x)< \min\{\tau,u(0)\}$ if $|x|\geq R$.
 We show that 
 $w_t \geq 0$ in $\Sigma_t\setminus B_R(0)$ for each $t$.

Otherwise, since $w_t(x)\to 0$ as $  |x|\to+\infty$ and $w_t|_{\partial \Sigma_t}=0$, 
we assume $w_t$ reaches its negative minimum at some $\hat x\in\Sigma_t\setminus B_R(0)$.  
By $ u_t(\hat x)<u(\hat x)$, we have $-\Delta w_t=\lambda w_t
+f(u_t)-f(u)= \int_u^{u_t}(f'(s)+\lambda)\rd s>0$ at $\hat x$. 
This is a contradiction since $\hat x$ is the minimum point of $w_t$. 

Note that Step 1 implies that 
$w_{t}\geq 0$ in $\Sigma_t$ for each $t\leq -R$.

\textbf{Step 2.}
Set $t_0=\sup\set{t   | w_{t'}\geq 0\ \text{in}\ \Sigma_{t'} \ \text{for any}\ t'\in(-\infty,t]}<\infty$.
We claim that $w_{t_0}\equiv 0$.
By continuity, $w_{t_0}\geq 0$.  For $t\leq t_0$
by \eqref{wl}, there holds
\begin{equation}\label{w}
	\begin{aligned}
		-\Delta w_t + \left[\frac{f_1(u_t)-f_1(u)}{u_t-u}-\lambda\right]  w_t =  \frac{f_2(u_t)}{u_t} u_t -\frac{f_2(u)}{u}u\geq \frac{f_2(u)}{u} w_t\geq  0, 
	\end{aligned}
\end{equation}
where $\frac{f_1(u_t)-f_1(u)}{u_t-u}-\lambda$ is bounded from below in $\Sigma_t$. By maximum principle (\cite{Chenli,Trudinger1997}), 
$w_{t}\equiv0$ or 
$w_{t}>0$ in $\Sigma_{t}$. 
If $w_{t_0}\not\equiv0$ then $w_{t_0} >0$.

To finish this step, 
we prove  that there exists $\delta_0>0$ such that for any $\delta\in(0,\delta_0]$
$$w_{\lambda_0+\delta}\geq0 \ \text{in}\ \Sigma_{\lambda_0+\delta}.
$$
Arguing by contradiction, for $\delta_i\to 0^+$, we let $x^i\in \Sigma_{\lambda_0+\delta_i}$ be the negative minimum point of $w_{\lambda_0+\delta_i}$.
  We note that by Step 1, $ |{x^i}|\leq R$ for all $i$. 
We assume $x^i\to x^0$.
Then
$$w_{\lambda_0}(x^0)\leq0,\quad \nabla w_{\lambda_0}(x^0)=0,
$$
 which implies $x^0\in\partial\Sigma_{\lambda_0}$.
 By \eqref{w} and Hopf Lemma (\cite{Chenli,Trudinger1997}), we get a contradiction
 $$\dfrac{\partial w_{\lambda_0}(x^0)}{\partial x_1}<0.
 $$

 Now we have shown that $u_{t_0}= u$ and $\frac{\partial u}{\partial x_1}>0$ in 
 $\Sigma_{t_0}$ by Step 2. 
 Then we can complete the proof since similar arguments hold for any direction in $\R^N$.

 To proceed, we can get
\eqref{eqAAAA} by  comparing with the unique positive solution 
 \[v= e^{\frac{a}\sigma+\frac N2}e^{-\frac{\sigma}4|x|^2}\] 
 to
\begin{equation*}
  \begin{cases}
  -\Delta v=\sigma v\log |v|+av \  \ &\text { in}\ \ \mathbb R^N,\\
  v(x)\to 0 \ \ &\text { as}\ \ |x|\to\infty,
  \end{cases}
  \end{equation*}
  where $a\in \R$. 
  Here we only give the details for the proof of \eqref{eqBBB}.
Set 
$z=-\frac{u'}{ru}$.  
We have 
\[
z'=rz^2 +r^{-1}\frac{f(u)}u-Nr^{-1}z:=d(r,z).
\]
By (F6), as $r\to +\infty$,
 \[
  d(r,z)=rz^2 +r^{-1}\sigma \log u-Nr^{-1}z+O(r^{-1})=r(z^2 -\frac{\sigma^2}4 )-Nr^{-1}z+O(r^{-1}).
  \]
For each $\tau\in(0,1)$, there is $r_{1,\tau}>0$ such that   if $r\geq r_{1,\tau}$ and  $z\geq \frac\sigma{2(1-\tau)}$, then 
\[d(r,z)\geq rz^2(1- (1-\tau)^2 )-Nr^{-1}z+O(r^{-1})\geq z^2. \]
On the other hand, there is $r_{1,\tau}>0$ such that   if $r\geq r_{2,\tau}$ and  $0 <z\leq \frac{\sigma((1-\tau))}{2}$, then 
\[d(r,z)\leq -r \frac{\sigma^2}4 (1- (1-\tau)^2  )-Nr^{-1}z+O(r^{-1})\leq- 1. \]
Once the solution curve $(r,z(r))$ enters   $[r_{1,\tau}, +\infty)\times [\frac\sigma{2(1-\tau)}, +\infty)$ or 
$[r_{2,\tau}, +\infty)\times (0, \frac{\sigma((1-\tau))}{2}]$, it   either blows up  at some finite $r$ or touches the $r-$axis. This is impossible
since $z(r)>0$ exists in $(0,+\infty)$. 
Hence we have  
\[\frac{\sigma((1-\tau))}{2}\leq z(r)\leq \frac\sigma{2(1-\tau)}\quad \mbox{for each}\ r\geq \max\{r_{1,\tau}, r_{2,\tau}\}.
\qedhere\]

\end{proof}

\subsection{Proof of Proposition \ref{prop35}}
 \begin{proof}
  Let $(\lambda, \boldsymbol{v})$ be a solution to \eqref{psa}.   Then by Lemma \ref{lem1bound},  $\sum_{i=1}^\ell(|\lambda|+\|v_i\|_{H^1})$ is bounded. 
Setting $w_j= v_j(\cdot-p_j)$ and $\lambda_i=\lambda-\mu_i$, we have 
\[\begin{aligned}
  J (\sum_{j=1}^\ell   w_j )=&\frac12\int_{\R^N}  | \sum_{j=1}^\ell   \nabla w_j  |^2
-\int_{\R^N}F(\sum_{j=1}^\ell   w_j ) \\
=& \sum_{j=1}^\ell J(w_j)+\frac12\sum_{i=1}^\ell\sum_{j\neq i} \int_{\R^N} \nabla w_i\nabla w_j  +\sum_{i=1}^\ell\int_{\R^N} F(w_i) 
 -\int_{\R^N}F(\sum_{j=1}^\ell   w_j ) \\
= & \mathbb{J}(\boldsymbol{v})+\frac12\sum_{i=1}^\ell \int_{\R^N} (f(w_i)+\lambda_i w_i)\sum_{j\neq i} w_j  +\sum_{i=1}^\ell\int_{\R^N} F(w_i) 
 -\int_{\R^N}F(\sum_{j=1}^\ell   w_j ).  
\end{aligned}\]
Note that by Lemma \ref{F12} (iii),
\[\begin{aligned}
  F(\sum_{j=1}^\ell   w_j )&=\frac{F(\sum_{j=1}^\ell   w_j )}{ (\sum_{j=1}^\ell   w_j )^2}  { (\sum_{j=1}^\ell   w_j )^2}\\
   &=\frac{F(\sum_{j=1}^\ell   w_j )}{ (\sum_{j=1}^\ell   w_j )^2}(\sum_{k=1}^\ell w_k^2 + \sum_{i=1}^\ell\sum_{k\neq i} w_iw_k)\\
  &>\sum_{k=1}^\ell\frac{F(   w_k )}{ w_k^2} w_k^2+\frac{F(\sum_{j=1}^\ell   w_j )}{ (\sum_{j=1}^\ell   w_j )^2}\sum_{i=1}^\ell\sum_{k\neq i} w_iw_k\\
  &=\sum_{k=1}^\ell F(w_k)+\frac{F(\sum_{j=1}^\ell   w_j )}{ (\sum_{j=1}^\ell   w_j )^2}\sum_{i=1}^\ell\sum_{k\neq i} w_iw_k.
\end{aligned}
\]
By (F5), there is $C>0$ such that for $s\in(0, 1+ \max_{1\leq i\leq \ell} \|v_i\|_{L^\infty})$,  
\[
f(s)\leq \sigma s\log s +C s, \quad   F(s)\geq \frac\sigma2 s^2\log s -C s^2.
\]
Therefore,
\[
  J (\sum_{j=1}^\ell   w_j )-  \mathbb{J}(\boldsymbol{v})< \frac12\sum_{i=1}^\ell \sum_{k\neq i}\int_{\R^N} w_iw_k\left(\sigma\log w_i - \sigma\log (\sum_{j=1}^\ell   w_j )+ 4 C \right).
\]

Without loss of generality, for some $i\neq k$, we assume that $|p_i-p_k|=\xi(\boldsymbol p)$, and  up to a transformation of coordinates,   $p_i=(-\xi(\boldsymbol p)/2,0')\in \R^N$ and $p_k=(\xi(\boldsymbol p)/2,0')\in \R^N$ with $0'\in \R^{N-1}$. By Lemma \ref{decay}, 
\[
C_1e^{-\frac{\sigma}4|x-p_i|^2}\leq w_i\leq C_2e^{-\frac{\sigma}4|x-p_i|^2}, \quad C_1e^{-\frac{\sigma}4|x-p_k|^2}\leq w_k\leq C_2 e^{-\frac{\sigma}4|x-p_k|^2}.
\]
Then  we have 
\begin{equation}\label{eq20000}
  \begin{aligned}
    \int_{\R^N} w_i w_k {\rm d}x\leq & C  \int_{\R^N} e^{-\frac\sigma4(|x_1+\frac {\xi(\boldsymbol p)}2|^2+|x_1-\frac {\xi(\boldsymbol p)}2|^2+2|x'|^2)}{\rm d} x_1 {\rm d} x'\\
    =&C \int_{\R^N} e^{-\frac\sigma4(2x_1^2+2|x'|^2+\frac{\xi(\boldsymbol p)^2}2)}{\rm d} x_1 {\rm d} x'
    =C e^{-\frac{\sigma \xi(\boldsymbol p)^2}8},
  \end{aligned}
\end{equation}
where $x=(x_1, x')$ with $x_1\in \R$ and $x'\in \R^{N-1}$.
On the other hand,
 \[\begin{aligned}
   &\int_{\R^N} w_iw_k\left(\log w_i - \log (\sum_{j=1}^\ell   w_j )\right)\\
   \leq& \int_{[0,1]\times \R^{N-1}} w_iw_k\left(\log w_i - \log (\sum_{j=1}^\ell   w_j )\right)   
  \leq    \int_{[0,1]\times \R^{N-1}} w_iw_k \log  \frac{w_i}{w_k}
\\
  \leq &-C\int_{[0,1]\times \R^{N-1}} (|x_1+\frac {\xi(\boldsymbol p)}2|^2-|x_1-\frac {\xi(\boldsymbol p)}2|^2) e^{-\frac\sigma4(|x_1+\frac {\xi(\boldsymbol p)}2|^2+|x_1-\frac {\xi(\boldsymbol p)}2|^2+2|x'|^2)}{\rm d} x_1 {\rm d} x'+C\int_{[0,1]\times \R^{N-1}} w_iw_k\\
  \leq &-C\int_{[0,1]\times \R^{N-1}} 2\xi(\boldsymbol p)x_1 e^{-\frac\sigma 4(2x_1^2+2|x'|^2+\frac{\xi(\boldsymbol p)^2}2)}{\rm d} x_1 {\rm d} x' +C\int_{\R^N} w_iw_k\\
 \leq&-C\xi(\boldsymbol p)e^{-\frac{\sigma \xi(\boldsymbol p)^2}8}\int_{0}^{1} x_1 e^{-\frac\sigma2x_1^2}{\rm d} x_1\int_{\R^{N-1}} e^{- \frac\sigma2|x'|^2} {\rm d} x'+Ce^{-\frac{\sigma \xi(\boldsymbol p)^2}8}.
 \end{aligned}
 \]

By \eqref{eq20000} again, we   deduce
\[
   \alpha < |\sum_{j=1}^\ell w_j|_2^2   \leq  \alpha + C e^{-\frac{\sigma \xi(\boldsymbol p)^2}8},\quad .
\]
Then, 
\[
  J (\sum_{j=1}^\ell   w_j ) +\frac{V_0}2\int_{\R^N} |\sum_{j=1}^\ell   w_j|^2\leq \mathbb{J}(\boldsymbol{v})+\frac{V_0}2\alpha-C'\xi(\boldsymbol p)e^{-\frac{\sigma \xi(\boldsymbol p)^2}8}.
\]
Since $0\leq 1-B^2=|\sum_{j=1}^\ell w_j|_2^{-2}(|\sum_{j=1}^\ell w_j|_2^2-\alpha)\leq C e^{-\frac{\sigma \xi(\boldsymbol p)^2}8}$, we have $|B-1|\leq C e^{-\frac{\sigma \xi(\boldsymbol p)^2}8}$. Hence,
\begin{align*}
  \int_{\R^N} |F(B\sum_{j=1}^\ell w_j)-F(\sum_{j=1}^\ell w_j)|
\leq&|B-1|\int_{\R^N} \sum_{j=1}^\ell w_j |f(\theta\sum_{j=1}^\ell w_j)|\leq C e^{-\frac{\sigma \xi(\boldsymbol p)^2}8},
\end{align*}
where $\theta\in (B,1)$. Then,
\begin{align*}
  J (B\sum_{j=1}^\ell   w_j ) +\frac{V_0}2\int_{\R^N} |B\sum_{j=1}^\ell   w_j|^2 \leq&  J (\sum_{j=1}^\ell   w_j ) +\frac{V_0}2\int_{\R^N} |\sum_{j=1}^\ell   w_j|^2 + C e^{-\frac{\sigma \xi(\boldsymbol p)^2}8}\\
  \leq& \mathbb{J}(\boldsymbol{v})+\frac{V_0}2\alpha-C\xi(\boldsymbol p)e^{-\frac{\sigma \xi(\boldsymbol p)^2}8}. \qedhere
\end{align*}

 \end{proof}

\vskip .1in
 \noindent{\bf Acknowledgement.} 
 The  research was supported by NSFC-12001044, NSFC-12071036, NSFC-11901582.

\vspace{0.4cm}

\end{document}